\newcommand{\ls}{\leqslant}
\newcommand{\gs}{\geqslant}
\renewcommand{\leq}{\leqslant}
\renewcommand{\leq}{\leqslant}
\newcommand{\R}{\mathbb R}
\newcommand{\eps}{\varepsilon}
\newtheorem{theo}{Theorem}[section]
\newtheorem{defi}{Definition}[section]
\newtheorem{prop}{Proposition}[section]
\newtheorem{cor}{Corollary}[section]
\newtheorem{lemma}{Lemma}[section]
\newtheorem{rem}{Remark}[section]
\numberwithin{equation}{section}
\title{Anisotropic curvature flow of immersed networks}
\date{\today}
\author{
Heiko Kr\"oner\footnote{Universit\"at Duisburg-Essen,
Fakult\"at f\"ur Mathematik, Essen, Germany,
heiko.kroener@uni-due.de},
Matteo Novaga\footnote{Dipartimento di Matematica, 
Universit\`a di Pisa, Pisa, Italy, 
matteo.novaga@unipi.it},
Paola Pozzi\footnote{Universit\"at Duisburg-Essen,
Fakult\"at f\"ur Mathematik, Essen, Germany,
paola.pozzi@uni-due.de}
}
\begin{document}
\maketitle

\begin{abstract}

We consider  motion by anisotropic curvature of a network of three curves immersed in the plane meeting at a triple junction and with the other ends fixed.
We show existence, uniqueness and regularity of a maximal geometric solution and we prove that, if the maximal time is finite, then either 
the length of one of the curves goes to zero or the $L^2$ norm of the anisotropic curvature blows up.

\vskip .3truecm \noindent {\bf Keywords:} Anisotropic shortening flow, networks, short-time existence, maximal solution
\vskip.1truecm \noindent {\bf MSC2010}:  53C44, 35K51, 74E10.
\end{abstract}

\tableofcontents

\section{Introduction}

The aim of this work is to study  motion by anisotropic curvature of a network of three curves in the plane. 
This evolution corresponds to a gradient flow of the anisotropic length of the network, which is the sum of the anisotropic lengths of the three curves.
Since multiple points of order greater than three are always energetically unstable (see \cite{Taylor, LAMO}), it is natural to consider 
networks with only triple junctions, the simplest of which is a network with three curves meeting at a common point.

The isotropic version of this problem attracted a considerable attention in recent years (see for instance  the extended survey \cite{MNPS} and references therein). In particular, the short time existence for the evolution has been first proved by L. Bronsard and F. Reitich in \cite{BR}, 
and later extended in  \cite{MNT,MMN}
where it is shown that, at the maximal existence time, either one curve disappears or the curvature blows up.

The main result of this paper, contained in Theorem \ref{teo:sciop}, is the extension of the result in \cite{MNT} to the smooth anisotropic setting.
More precisely, we show that at the maximal existence time of the geometric solution (see Definition~\ref{geomsol}), 
either the length of one curve goes to zero or the $L^2$ norm of the anisotropic curvature blows up. 
In the latter case, we also provide a lower bound on the  blow up rate of the curvature (see Lemma~\ref{lem5.2}).

A relevant technical issue in this paper is due to the fact that,
in the case of networks, the evolution is governed by a system of PDE's rather than by a single equation, 
hence it is difficult to use the maximum principle, which is usually the main tool to get estimates on the geometric quantities for curvature flows.
As a consequence,  following \cite{MNT} in order  to control these quantities we rely on
delicate integral estimates and interpolation inequalities.

A challenging open problem is the extension of such result to the nonsmooth (including crystalline) anisotropic setting, 
as it was done in \cite{chambolle13,MNP} for the case of closed planar curves.
In the case of networks, the dependence of the  integral estimates on the anisotropy, makes such extension problematic.

Let us point out that, in the paper  \cite{BCN}, the authors proved a short time existence result for the crystalline evolution of embedded networks,
under a suitable assumption on the initial data which allows to reduce the evolution equation to a system of ODE's.
We also recall that in the papers \cite{KT,BCK} the authors discuss 
existence of global weak solutions for the evolutions of embedded networks by anisotropic curvature flow.

\smallskip

The paper is organized as follows: In Section \ref{sec1} we introduce the notation and define the relevant geometric object that we shall use throughout the paper. In Section \ref{sec:2} we prove a short time existence result for the evolution following the approach in \cite{BR,MNT}.
In Section \ref{sec:maxsolGEOP} we show the existence and uniqueness of a maximal geometric solution and we prove that, at the maximal time, either the length of one curve tends to zero or the $H^1$ norm of the anisotropic curvature blows up.
Finally, in Section \ref{sec:integralEst} we refine this conclusion by showing that, if the $H^1$ norm  blows up, then also the $L^2$ norm  
of the anisotropic curvature blows up.
We conclude the paper with an Appendix containing some technical result which are used in the paper.

\smallskip

\noindent\textbf{Acknowledgments:} MN is a member of the INDAM/GNAMPA and
acknowledges partial support by the PRIN 2017 Project \emph{Variational methods for stationary and evolution problems with singularities and interfaces}. HK und PP have been supported by the DFG (German Research Foundation) Projektnummer: 404870139.

\section{Notation and preliminary definitions}\label{sec1}

We consider regular planar curves parametrized by $u: [0, T]\times I \to \R^2$, where $I=[0,1]$. 
We denote by $s$ the arc-length parameter of the curve (thus $\partial_s (\cdot)=\partial_x (\cdot)/ |u_x|$), by $\tau =u_x/|u_x|=u_s=(\sin \theta, -\cos \theta)$ its unit tangent and $\nu =(\cos \theta, \sin \theta)$ its unit normal.
The Euclidean scalar product in $\R^2$ is denoted by $\cdot$. The symbol $\perp $  stands for anti-clockwise rotation by $\pi/2$, therefore $(a,b)^\perp= (-b, a)$.
 Recall the classical Frenet formulas
\begin{align}
u_{ss}=\tau_s= \vec{\kappa}= \kappa \nu, \qquad \nu_s =-\kappa \tau.
\end{align}
Obviously $\vec{\kappa}=\frac{u_{xx}}{|u_{x}|^{2}} - \frac{u_{xx}}{|u_{x}|^{2}} \cdot \tau \tau$ and $\kappa = \frac{u_{xx}}{|u_{x}|^{2}} \cdot \nu$.
Moreover recall that  from the expression for $\nu_s$ one infers that for the scalar curvature $\kappa$ we have
\begin{align}
\kappa=\theta_s.
\end{align}

\subsubsection{Anisotropies}

Let us recall some definitions and properties of anisotropy maps (see for instance \cite{BePa}).
\begin{defi}
 We call  \emph{anisotropy} a norm $\varphi: \R^2 \to [0, \infty)$.
We say that $\varphi$  is \emph{smooth}  if $\varphi \in C^{\infty}(\R^{2} \setminus \{ 0 \})$ and $\varphi$ is  \emph{elliptic} if $\varphi^{2}$ is uniformly convex, that is, there exists $C > 0$ such that
\begin{align}\label{ell-cond}
 D^2(\varphi^{2}) \gs C \, \mbox{Id}
 \end{align}
in the distributional sense. 
\end{defi}
\begin{defi}
The set $W_{\varphi}:=\{\varphi \leq 1 \}$ is called  \emph{Wulff shape}. We say that
$\varphi$ is \emph{crystalline} if  $W_{\varphi}$ is a polygon.
\end{defi}
\begin{defi}
 Given an anisotropy $\varphi$, we introduce the \emph{polar norm} $\varphi^\circ$ relative to $\varphi$
 $$\varphi^\circ (x) = \sup\{ \xi \cdot x \, \vert \, \varphi(\xi) \ls 1\}.$$
\end{defi}

\begin{rem}\rm
Note that $\varphi$ is smooth and elliptic if and only if 
$\varphi^\circ$ is smooth and elliptic (\cite[\S~2]{chambolle13}). 

The ellipticity condition implies  that the Wulff shape is uniformly convex. 
Moreover, from \eqref{ell-cond} one infers that 
\begin{align}\label{ell-cond2}
D^{2}\varphi (\nu) \tau  \cdot \tau \geq \widetilde{C}, \qquad \widetilde{C}:=\frac{C}{2 \max  \{\varphi(\tilde{\nu}) \, | \, \tilde{\nu} \in S^{1}  \}},
\end{align}
for  unit vectors $\nu$ and $\tau$ with $\nu \cdot \tau =0$ (see \cite[Remark~1]{MNP}).
\end{rem}

In the following, we shall restrict ourselves to the case of smooth and elliptic anisotropies. 

\smallskip

Observe that the homogeneity property of a norm $\varphi$ yields $D \varphi(p) \cdot p = \varphi(p)$ and $D^{2}\varphi (p)p =0$ for any $p \neq 0$, two facts that we will use repeatedly in our computations.

\subsubsection{Anisotropic scalar curvature and anisotropic curve shortening flow}

When $u$ is smooth and the anisotropy $\varphi$ is smooth and elliptic
the classical formulation of the anisotropic curvature flow
is given by the equation  (see \cite{angenent90}) 
\begin{align}\label{acsf}
u_t= \varphi^\circ(\nu) \kappa_\varphi \nu, 
\end{align}
 where the scalar anisotropic curvature is given by
 \begin{align}
 \kappa_\varphi := -N_{s} \cdot \tau
 \end{align}
 with $N=D\varphi^{\circ}(\nu)$ the Cahn-Hoffman vector. Thus $$\kappa_\varphi = D^{2} \varphi^{\circ} (\nu) \tau \cdot \tau \kappa.$$
Clearly, boundary and initial conditions (and compatibility conditions) have to be specified as well, but for the moment  we neglelct those and focus only on the evolution equation. 
By setting
\begin{align}
  \phi (\theta):=\varphi^\circ(\nu)=\varphi^\circ(\cos \theta, \sin \theta),
\end{align}
a straightforward calculation gives
\begin{align}
\label{a1}
\phi(\theta)+ \phi''(\theta)=D^2 \varphi^\circ(\nu) \tau \cdot \tau ,
\end{align}
so that we can rewrite the  flow \eqref{acsf} as
\begin{align}
\label{ACSFF}
u_t=\phi(\theta)(\phi(\theta)+ \phi''(\theta)) \,\kappa \nu= \psi (\theta) \kappa \nu ,
\end{align}
where $\kappa$ is the Euclidean curvature and
\begin{align} \label{defpsi}
\psi(\theta):=\phi(\theta)(\phi(\theta)+ \phi''(\theta))  = \varphi^\circ(\nu) D^2 \varphi^\circ(\nu) \tau \cdot \tau .
\end{align}
Note that by \eqref{ell-cond2}, the ellipticity of $\varphi$ implies uniform bounds for $\psi$, i.e.,  
 \begin{align}\label{boundmpiccolo}
  M \geq \psi \geq m>0.
\end{align}

In the following we shall admit tangential components to the flow, therefore  we will  consider evolution equations of type 
\begin{align}\label{acsfT}
u_t= \varphi^\circ(\nu) \kappa_\varphi \nu + \lambda \tau =\psi(\theta) \kappa \nu + \lambda \tau,
\end{align}
for some sufficiently smooth scalar function $\lambda$.
\begin{defi}
The \textbf{special anisotropic curve shortening flow} is defined through a specific choice of tangential term, namely
we take $\lambda= \varphi^{\circ}(\nu) (D^2 \varphi^\circ(\nu) \tau \cdot \tau) \frac{u_{xx}}{|u_{x}|^{2}} \cdot \tau$  in \eqref{acsfT}. Thus, the special anisotropic curve shortening flow is given by
\begin{align}\label{SpecialFlow}
u_{t}=  \varphi^{\circ}(\nu) (D^2 \varphi^\circ(\nu) \tau \cdot \tau)  \frac{u_{xx}}{|u_{x}|^{2}} =\psi(\theta) \frac{u_{xx}}{|u_{x}|^{2}} .
\end{align}
\end{defi}

Next we derive the evolution laws of  relevant geometric quantities.
\begin{lemma}
\label{lemma2.1}
Assume $u$ satisfies \eqref{acsfT}. Then,
the following equalities hold
\begin{align}\nonumber
\partial_t \partial_s (\cdot) &= \partial_s \partial_t (\cdot) +\psi(\theta) \kappa^2 \partial_s (\cdot) -\lambda_{s} \partial_{s} (\cdot)
\\\nonumber
\tau_t &= [(\psi(\theta) \kappa)_s + \lambda \kappa ]\nu
\\\nonumber
\nu_t & = -[(\psi(\theta) \kappa)_s +\lambda \kappa ] \tau 
\\
\label{k_t}
\kappa_t &= (\psi(\theta) \kappa)_{ss} + \psi(\theta) \kappa^3  + \lambda \kappa_{s}
\\\nonumber
\theta_t &=(\psi(\theta) \kappa)_s  + \lambda \kappa. 
\end{align}
For the special flow \eqref{SpecialFlow} where $\lambda =\psi(\theta) \frac{u_{xx}}{|u_{x}|^{2}} \cdot \tau = - \psi(\theta) \frac{\partial}{\partial x} \left( \frac{1}{|u_{x}|} \right)$ we have that
\begin{align}\label{l_{t}}
\lambda_{t} =\frac{\lambda}{\psi(\theta)} \psi'(\theta) [(\psi(\theta) \kappa)_s  + \lambda \kappa] + \psi(\theta) \lambda_{ss} - \psi(\theta) (\psi(\theta) \kappa^{2})_{s} -\lambda \lambda_{s} + \lambda \psi(\theta) \kappa^{2}.
\end{align}
\end{lemma}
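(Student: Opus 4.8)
The plan is to derive all the evolution equations by a single mechanism: commuting $\partial_t$ with $\partial_s$, using that $\partial_s = \frac{1}{|u_x|}\partial_x$ and $\partial_x$ commutes with $\partial_t$. First I would compute $\partial_t |u_x|$. Differentiating $|u_x|^2 = u_x\cdot u_x$ gives $\partial_t|u_x| = \frac{u_x\cdot u_{xt}}{|u_x|} = \tau\cdot\partial_x(u_t)$. From \eqref{acsfT} we have $u_t = \psi(\theta)\kappa\nu + \lambda\tau$, so $\partial_x u_t = |u_x|\,\partial_s(\psi(\theta)\kappa\nu+\lambda\tau)$; using Frenet ($\tau_s=\kappa\nu$, $\nu_s=-\kappa\tau$) and taking the $\tau$-component yields $\partial_t|u_x| = |u_x|\bigl(-\psi(\theta)\kappa^2 + \lambda_s\bigr)$. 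Hence $\partial_t\partial_s(\cdot) = \partial_t\bigl(\tfrac{1}{|u_x|}\partial_x(\cdot)\bigr) = \partial_s\partial_t(\cdot) + \bigl(\psi(\theta)\kappa^2 - \lambda_s\bigr)\partial_s(\cdot)$, which is the first identity.

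Next I would apply this commutator to $\tau = u_s$. We get $\tau_t = \partial_t\partial_s u = \partial_s\partial_t u + (\psi\kappa^2-\lambda_s)\tau = \partial_s(\psi(\theta)\kappa\nu+\lambda\tau) + (\psi\kappa^2-\lambda_s)\tau$. Expanding with Frenet, the $\tau$-components cancel and one is left with $\tau_t = \bigl[(\psi(\theta)\kappa)_s + \lambda\kappa\bigr]\nu$. Since $\{\tau,\nu\}$ is an orthonormal frame and $\nu_t\perp\nu$, differentiating $\tau\cdot\nu=0$ forces $\nu_t = -\bigl[(\psi(\theta)\kappa)_s+\lambda\kappa\bigr]\tau$. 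For $\theta$: from $\tau=(\sin\theta,-\cos\theta)$ we have $\tau_t = \theta_t(\cos\theta,\sin\theta) = \theta_t\,\nu$, so comparing with the expression for $\tau_t$ gives $\theta_t = (\psi(\theta)\kappa)_s + \lambda\kappa$. For $\kappa$: using $\kappa=\theta_s$ and the commutator again, $\kappa_t = \partial_t\partial_s\theta = \partial_s\theta_t + (\psi\kappa^2-\lambda_s)\theta_s = \partial_s\bigl[(\psi\kappa)_s+\lambda\kappa\bigr] + (\psi\kappa^2-\lambda_s)\kappa = (\psi(\theta)\kappa)_{ss} + \lambda_s\kappa + \lambda\kappa_s + \psi\kappa^3 - \lambda_s\kappa = (\psi(\theta)\kappa)_{ss} + \psi(\theta)\kappa^3 + \lambda\kappa_s$, as claimed. (Here one must remember that $\partial_s$ and $\partial_x$ do \emph{not} commute with each other either, but the interior terms are handled consistently by always reducing to $\partial_x$.)

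For the final identity I would specialize to the special flow, where $\lambda = \psi(\theta)\frac{u_{xx}}{|u_x|^2}\cdot\tau = -\psi(\theta)\partial_x\bigl(\tfrac{1}{|u_x|}\bigr)$; note $\frac{u_{xx}}{|u_x|^2}\cdot\tau = \partial_s(\log|u_x|)$ is a purely tangential quantity. I would write $\lambda = \psi(\theta)\,g$ with $g := \tfrac{u_{xx}}{|u_x|^2}\cdot\tau$, differentiate in $t$ by the product rule, compute $\partial_t\psi(\theta) = \psi'(\theta)\theta_t = \psi'(\theta)\bigl[(\psi\kappa)_s+\lambda\kappa\bigr]$ (which produces the first term $\tfrac{\lambda}{\psi(\theta)}\psi'(\theta)[(\psi\kappa)_s+\lambda\kappa]$ once one divides and multiplies by $\psi$), and then compute $\partial_t g$. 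The latter is the delicate bookkeeping step: one writes $g = \partial_x\bigl(\tfrac{-1}{|u_x|}\bigr)$, so $\partial_t g = \partial_x\bigl(\tfrac{\partial_t|u_x|}{|u_x|^2}\bigr) = \partial_x\bigl(\psi(\theta)\kappa^2 - \lambda_s\bigr)$ using the formula for $\partial_t|u_x|$ derived above. Converting $\partial_x$ back to $|u_x|\partial_s$ and expanding $\partial_s(\psi\kappa^2-\lambda_s)$ gives, after multiplying through by $\psi(\theta)$ and collecting, the stated $\psi(\theta)\lambda_{ss} - \psi(\theta)(\psi(\theta)\kappa^2)_s - \lambda\lambda_s + \lambda\psi(\theta)\kappa^2$ terms (the $\lambda\lambda_s$ and $\lambda\psi\kappa^2$ arising from the Jacobi-type lower-order corrections when commuting $\partial_t$ past the $|u_x|$ factor inside $g$).

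I expect the main obstacle to be exactly this last computation for $\lambda_t$: it requires carefully tracking several instances of the noncommutativity of $\partial_t$, $\partial_s$, and $\partial_x$, keeping $\psi(\theta)$ (which depends on $\theta$, hence implicitly on $t$ and $s$) inside and outside derivatives consistently, and recognizing the chain-rule term $\psi'(\theta)\theta_t$. The first four evolution laws, by contrast, are routine once the commutator $[\partial_t,\partial_s] = (\psi(\theta)\kappa^2-\lambda_s)\partial_s$ is established. One subtlety worth flagging in the write-up: in the $\kappa_t$ computation one should double-check that the $\partial_s$ appearing inside $(\psi(\theta)\kappa)_{ss}$ is the \emph{fixed}-time arc-length derivative, so that no extra $t$-derivatives leak in — this is automatic if one consistently expresses everything via $\partial_x$ and the (time-dependent) conformal factor $|u_x|$.
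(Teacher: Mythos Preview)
Your approach is correct and is exactly the direct-computation route the paper intends; its own proof merely says the identities ``easily follow by straightforward calculations'' and refers to \cite{MNP,MNPS}. One small slip to fix in the write-up of the $\lambda_t$ step: $\partial_t|u_x|/|u_x|^2 = (\lambda_s - \psi\kappa^2)/|u_x|$, not $\psi\kappa^2 - \lambda_s$, and it is precisely this residual $1/|u_x|$ factor (together with the sign) that, when you convert $\partial_x = |u_x|\partial_s$ and use $g=\lambda/\psi$, generates the $-\lambda\lambda_s$ and $+\lambda\psi\kappa^2$ terms you anticipate.
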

\begin{proof}
The assertions easily follow by straightforward calculations, see for instance \cite[Lemma~1]{MNP} for the special case where $\lambda=0$ and \cite[Lemma~3.1]{MNPS} for the  isotropic case.
\end{proof}

\begin{lemma}
Assume $u$ satisfies \eqref{acsfT}. 
Then  the following holds for the isotropic and anisotropic length of the curve
\begin{align}
\frac{d}{dt} L(u) &=\frac{d}{dt} \int_{I} ds=-\int_{I} \psi(\theta) \kappa^{2} ds + [\lambda]_{0}^{1}, \\ \label{dtlan}
\frac{d}{dt} L_{\varphi}(u) & :=\frac{d}{dt} \int_{I} \varphi^{\circ} (\nu) ds = - \int_{I} \kappa_{\varphi}^{2} \varphi^{\circ}(\nu) ds + [\varphi^{\circ}(\nu) \lambda - \psi(\theta) \kappa D \varphi^{\circ}(\nu) \cdot \tau]_{0}^{1}.
\end{align}
\end{lemma}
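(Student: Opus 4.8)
The plan is to differentiate under the integral sign, using the evolution equations collected in Lemma~\ref{lemma2.1}, the Frenet formulas, and the homogeneity relations $D\varphi^\circ(p)\cdot p = \varphi^\circ(p)$ and $D^2\varphi^\circ(p)\,p = 0$. First, for the Euclidean length, I would compute $\partial_t(ds)$. Writing $ds = |u_x|\,dx$ and using $\partial_t|u_x| = \tau\cdot\partial_x u_t = |u_x|\,\tau\cdot\partial_s u_t$, then inserting $u_t = \psi(\theta)\kappa\nu + \lambda\tau$ and expanding $\partial_s u_t$ with $\nu_s = -\kappa\tau$, $\tau_s = \kappa\nu$, one gets $\tau\cdot\partial_s u_t = -\psi(\theta)\kappa^2 + \lambda_s$, hence $\partial_t(ds) = (-\psi(\theta)\kappa^2 + \lambda_s)\,ds$ (equivalently, this is the first identity of Lemma~\ref{lemma2.1} applied to the constant function). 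Integrating over $I$ and noting $\int_I \lambda_s\,ds = [\lambda]_0^1$ gives the first formula.

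For the anisotropic length I would write $\partial_t\big(\varphi^\circ(\nu)\,ds\big) = \big(D\varphi^\circ(\nu)\cdot\nu_t\big)\,ds + \varphi^\circ(\nu)\,\partial_t(ds)$, substitute $\nu_t = -[(\psi(\theta)\kappa)_s + \lambda\kappa]\tau$ from Lemma~\ref{lemma2.1} together with the expression for $\partial_t(ds)$ above, and integrate over $I$. The term $-\int_I (\psi(\theta)\kappa)_s\,D\varphi^\circ(\nu)\cdot\tau\,ds$ is integrated by parts, producing the boundary contribution $-[\psi(\theta)\kappa\,D\varphi^\circ(\nu)\cdot\tau]_0^1$ and the bulk term $\int_I \psi(\theta)\kappa\,\partial_s\big(D\varphi^\circ(\nu)\cdot\tau\big)\,ds$. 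Using $\nu_s=-\kappa\tau$, $\tau_s=\kappa\nu$, Euler's relation and $D^2\varphi^\circ(\nu)\nu=0$ one finds $\partial_s\big(D\varphi^\circ(\nu)\cdot\tau\big) = \kappa\,\varphi^\circ(\nu) - \kappa\,D^2\varphi^\circ(\nu)\tau\cdot\tau$; since $\psi(\theta) = \varphi^\circ(\nu)\,D^2\varphi^\circ(\nu)\tau\cdot\tau$ and $\kappa_\varphi = \big(D^2\varphi^\circ(\nu)\tau\cdot\tau\big)\kappa$, the bulk term becomes $\int_I\big(\psi(\theta)\kappa^2\varphi^\circ(\nu) - \kappa_\varphi^2\varphi^\circ(\nu)\big)\,ds$, and the term $\psi(\theta)\kappa^2\varphi^\circ(\nu)$ cancels against the $-\varphi^\circ(\nu)\psi(\theta)\kappa^2$ coming from $\varphi^\circ(\nu)\,\partial_t(ds)$.

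What then remains, besides $-\int_I\kappa_\varphi^2\varphi^\circ(\nu)\,ds$ and the boundary term already obtained, is $\int_I\big(-\lambda\kappa\,D\varphi^\circ(\nu)\cdot\tau + \varphi^\circ(\nu)\lambda_s\big)\,ds$; since $\partial_s\big(\varphi^\circ(\nu)\lambda\big) = -\kappa\,(D\varphi^\circ(\nu)\cdot\tau)\lambda + \varphi^\circ(\nu)\lambda_s$, this integral equals $[\varphi^\circ(\nu)\lambda]_0^1$, and collecting everything yields exactly the claimed identity. The computation is essentially routine; the only point that requires genuine care is the bookkeeping with the homogeneity identities for $\varphi^\circ$ — in particular recognizing $\psi(\theta)\kappa^2\,D^2\varphi^\circ(\nu)\tau\cdot\tau = \kappa_\varphi^2\varphi^\circ(\nu)$ — and keeping track of all boundary terms, since these are precisely the quantities that will be evaluated at the triple junction later on.
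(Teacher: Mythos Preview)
Your argument is correct and follows essentially the same route as the paper: differentiate under the integral, use the Frenet relations together with the homogeneity identities for $\varphi^\circ$, and integrate by parts once. The only difference is organizational: the paper bundles the integrand as $\varphi^\circ(\nu)\,|u_x|=\varphi^\circ(u_x^{\perp})$ and differentiates this single expression (so the boundary term falls out after one integration by parts), whereas you split $\varphi^\circ(\nu)\,ds$ by the product rule and handle the $\lambda$-terms via the identity $\partial_s(\varphi^\circ(\nu)\lambda)=-\kappa(D\varphi^\circ(\nu)\cdot\tau)\lambda+\varphi^\circ(\nu)\lambda_s$; both reach the same formula.
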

\begin{proof} We compute
\begin{align*}
\frac{d}{dt} L(u) & 
= \int_{I} \tau \cdot u_{tx} dx =
-\int_{I}  \kappa \nu \cdot \psi(\theta) \kappa \nu ds + \int_{I} \tau \cdot (\lambda \tau)_{x} dx 
= -\int_{I} \psi(\theta) \kappa^{2} ds + [\lambda]_{0}^{1}
\end{align*}
and
\begin{align*}
\frac{d}{dt} L_{\varphi}(u) & =\frac{d}{dt} \int_{I} \varphi^{\circ} (\nu) ds = \int_{I} D \varphi^{\circ}(\nu) \cdot u_{tx}^{\perp} dx = \int_{I} D \varphi^{\circ}(\nu) \cdot ( - \psi(\theta) \kappa \tau + \lambda \nu)_{x} dx\\
&=- \int_{I} D^{2} \varphi^{\circ} (\nu) \tau \cdot \tau \psi(\theta) \kappa^{2} ds + [\varphi^{\circ}(\nu) \lambda - \psi(\theta) \kappa D \varphi^{\circ}(\nu) \cdot \tau]_{0}^{1}\\
&=- \int_{I}  \frac{ \psi(\theta)^{2}}{\varphi^{\circ} (\nu)} \kappa^{2} ds + [\varphi^{\circ}(\nu) \lambda - \psi(\theta) \kappa D \varphi^{\circ}(\nu) \cdot \tau]_{0}^{1}.
\end{align*}
\end{proof}

\subsubsection{The Geometric Problem}\label{sec:geoprob}

For basic definitions of networks see for instance \cite[\S~2]{MNPS}.
We consider  networks  $\mathbb{S}$ of curves parametrized by regular maps $u^{i}: [0,1] \to \R^{2}$, $i=1,2,3$,
 such that $ u^{i}(1)=P^{i}$ (with $P^{i} \in \R^{2}$ given) and $u^{i}(0)=u^{j}(0)$, for $i, j \in \{ 1,2,3 \}$, that is the curves are  parametrized in such a way that the origin is mapped to the triple junction.

\begin{figure}[h]
    \centering
     \includegraphics[width=0.25\textwidth]{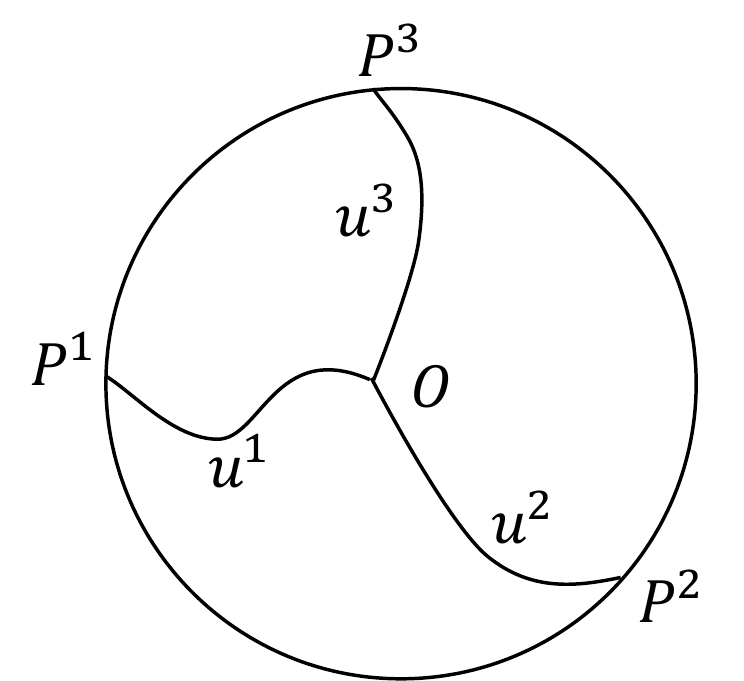}
    \caption{Network with one triple point $O$ and three endpoints $P^1,P^2,P^3$}
    \label{figure1}
\end{figure}

\begin{defi}[Geometrically admissible networks] \label{admtriod}
A network $\mathbb{S}$  is called admissible if there exist regular parametrizations
$\sigma^{i} \in C^{2, \alpha}([0,1], \R^{2})$, $i=1,2,3$ such that $\mathbb{S} = \cup_{i=1}^{3}\sigma^{i} ([0,1])$ and  there holds
\begin{align*}
\left\{ \begin{array}{lr}
\sigma^{i}(1) =P^{i} &    i=1,2,3,\\
\sigma^{1}(0)=\sigma^{2}(0)=\sigma^{3}(0) &  \\
\sum_{i=1}^{3} D \varphi^{\circ} (\nu^{i}_{0}) =0 &  \text{ where  } \nu^{i}_{0}:= \frac{(\sigma^{i}_{x})^{\perp}}{|\sigma^{i}_{x}|},
\end{array}
\right.
\end{align*}
together with 
$$ \kappa_{\varphi}^{i}=0 \quad \text{ at } x=1,$$
(where $\kappa_{\varphi}^{i}$ denotes the anisotropic curvature of the curve $\sigma^{i}$) and
$$ 
 \kappa_{\varphi}^{i} \varphi^{\circ}(\nu^{i}_{0}) \nu^{i}_{0} +\lambda^{i}_{0} \tau^{i}_{0}=\kappa_{\varphi}^{j}\varphi^{\circ}(\nu^{j}_{0})\nu^{j}_{0}+\lambda^{j}_{0}\tau^{j}_{0} \quad \text{ for } i,j \in \{ 1,2,3 \} \text{ at } x=0.$$
Here $\lambda^{i}_{0}$ denotes a further geometric quantity,
 whose expression is formulated in \eqref{expressionlambda} below.
 In particular we see that $\lambda^{i}_{0}$ is given as a linear combination of $\psi(\theta^{i}) \kappa^{i}$ and $\psi(\theta^{i\pm 1}) \kappa^{i\pm1}$.
\end{defi}


\begin{defi}\label{geomsol}
 Given an initial admissible network $\sigma:=( \sigma^{1}, \sigma^{2}, \sigma^{3})$  as in Definition~\ref{admtriod} 
we look for $T>0$ and regular maps $u^{i}: [0,T) \times [0,1] \to \R^{2}$, $i=1,2,3$, 
with $u^{i} \in C^{\frac{2+\alpha}{2}, 2+\alpha} ([0,T) \times [0,1], \R^{2}) $
such that
\begin{align}
 (u^{i}_{t} \cdot \nu^{i}) \nu^{i} = \psi(\theta^{i} ) \kappa^{i} \nu^{i}  \qquad \text{ on } (0,T) \times (0,1)  \qquad \qquad i=1,2,3,
\end{align}
with initial datum $u^{i}(0, \cdot)=\sigma^{i}(\cdot)$ up to reparametrization ( i.e.,
$u^{i}(0, \cdot)=\sigma^{i}(\phi^{i}(\cdot))$ for some orientation preserving diffeomorphism $\phi^{i}\in C^{2,\alpha} ( [0,1] ,[0,1])$ 
) and (natural) boundary conditions
\begin{align}
\left\{ \begin{array}{lr}
u^{i}(t,1) =P^{i} &  \text{ for all } t \in (0,T), \, i=1,2,3,\\
u^{1}(t,0)=u^{2}(t,0)=u^{3}(t, 0) &  \text{ for all } t \in (0,T),\\
\sum_{i=1}^{3} D \varphi^{\circ} (\nu^{i}(t,0)) =0 &  \text{ for all } t \in (0,T).
\end{array}
\right.
\end{align}
A solution to such problem is called \textbf{geometric solution}.
\end{defi}

\begin{rem}[Anisotropic angle condition]
The boundary condition
\begin{align}\label{HC}
\sum_{i=1}^{3} D \varphi^{ \circ} (\nu^{i}) =0
\end{align}
at the triple junction is the anisotropic version of the Herring condition (cf. \cite[Def. 2.5]{MNPS}) and is derived by considering the first variation of
$
E(\mathbb{S}) := \sum_{i=1}^{3} \int_{I} \varphi^{\circ} (\nu^{i}) ds^{i}.
$
Indeed, for variations of type $u^{i}+ \epsilon \varphi^{i}$, where $\varphi^{i}$ are smooth functions with $\varphi^{i}(1)=0$, $\varphi^{i}(0)=\varphi^{j}(0)$ for all $i, j \in \{ 1,2,3 \}$ we can write
\begin{align*}
\frac{d}{d\epsilon} E(\mathbb{S}_{\epsilon})= - \sum_{i=1}^{3} \int_{[0,1]} (D^{2} \varphi^{ \circ} (\nu^{i}) \tau^{i} \cdot \tau^{i} ) \kappa^{i} \nu^{i} \cdot \varphi^{i} ds - \sum_{i=1}^{3} D \varphi^{ \circ} (\nu^{i}(0))  \cdot (\varphi^{i}(0))^{\perp},
\end{align*}
(where here and in the following we write $ds$ instead of $ds^{i}$, the meaning being clear from the context)
and \eqref{HC} is immediately deduced.
Note that the vectors $\xi^{i}:= D \varphi^{\circ}(\nu^{i})$  appearing in \eqref{HC} belong to the boundary of the Wulff shape, i.e., $\xi^{i} \in \partial W_{\varphi}$, $i=1,2,3$. 
We can state that the angles at which the tangent planes to $\partial W_{\varphi}$ at $\xi^{i}$ can meet are  bounded away from zero and $\pi$: indeed in one of these two limit cases, the three vectors must be in shape of a Y (possibly with two vectors coinciding), but 
 we get a contradiction using the symmetry and  convexity of the Wulff shape.

Since $ \nu^{i}$ is normal to the tangent plane at $\xi^{i} =D \varphi^{\circ}(\nu^{i}) \in \partial W_{\varphi}$, this means that there exists a positive constant $ C$ depending on $\varphi^{\circ}$ such that
\begin{align*}
0 \leq | \nu^{i} \cdot \nu^{j}| \leq C <1, \qquad i \neq j, \quad (i, j \in \{ 1,2,3 \}).
\end{align*}
In turns this implies  the existence of a postive constant $ a_{0}$ depending on $\varphi^{\circ}$ such that
\begin{align}\label{a0}
|\nu^{i} \cdot \tau^{j}| \geq a_{0} >0 \qquad i \neq j, \quad (i, j \in \{ 1,2,3 \}).
\end{align}
\end{rem}

For the notion of geometric solution it is enough to specify the normal velocity. To   attack the problem analytically, we actually consider the system
\begin{align}
 u^{i}_{t}  = \psi(\theta^{i} ) \kappa^{i} \nu^{i} + \lambda^{i} \tau^{i} \qquad \qquad i=1,2,3,
\end{align}
for some  scalar maps $\lambda^{i} \in C^{\frac{\alpha}{2}, \alpha} ([0,T) \times [0,1], \R^{2})$.
Note that the presence of  tangential components $\lambda^{i}$ is necessary to allow for movements of the triple junction.
In principle there is some freedom in the choice of these maps, but the freedom is restricted only to the points in the interior of the interval of definition.
Indeed we show below in Section~\ref{behav} that $\lambda^{i}$, $i=1,2,3$ are fixed by the problem  at the boundary. More precisely we show that at the boundary we can express $\lambda^{i}$ as a linear combination of the geometric quantities  $\psi(\theta^{i}) \kappa^{i}$ and $\psi(\theta^{i\pm 1}) \kappa^{i\pm1}$.

Among all possible choices of tangential components $\lambda^{i}$, we highlight one specific flow that will play
an important role in our discussion:
\begin{defi}\label{def:SF}
A  solution as in Definition~\ref{geomsol} such that $u_{t}^{i}$, $i=1,2,3$, evolves according to \eqref{SpecialFlow}  is called \textbf{Special Flow}.
\end{defi}
The Special Flow provides a well posed problem that we can attack analytically.
We shall use the Special Flow to derive short time-existence of  a geometric solution, and to  show its uniqueness and smoothness.

 
\subsubsection{Behavior of a generic tangential component $\lambda^{i}$ at the triple junction}\label{behav}
At the triple junction beside the concurrency condition we impose that the velocity  be the same for all curves involved, hence we impose
\begin{align}\label{velcond}
\psi(\theta^{i}) \kappa^{i} \nu^{i} + \lambda^{i} \tau^{i} =\psi(\theta^{j}) \kappa^{j} \nu^{j} + \lambda^{j} \tau^{j}
\end{align}
or equivalently (after rotation by $\pi/2$)
\begin{align*}
-\psi(\theta^{i}) \kappa^{i} \tau^{i} + \lambda^{i} \nu^{i} =-\psi(\theta^{j}) \kappa^{j} \tau^{j} + \lambda^{j} \nu^{j}
\end{align*}
for every $i,j  \in \{ 1,2,3 \}$.
 Multiplying with $D \varphi^{\circ}(\nu^{i})$, summing over $i$, and using \eqref{HC} gives
 \begin{align}
 0= \sum_{i=1}^{3} \psi(\theta^{i}) \varphi^{\circ}(\nu^{i}) \kappa^{i} + \lambda^{i} (\tau^{i} \cdot D \varphi^{\circ} (\nu^{i}))
\end{align}
and
\begin{align}\label{22}
 0= \sum_{i=1}^{3}  \varphi^{\circ}(\nu^{i}) \lambda^{i} - \psi(\theta^{i})\kappa^{i}(\tau^{i} \cdot D \varphi^{\circ} (\nu^{i})).
 \end{align}
 In the isotropic case this amounts to $\sum_{i=1}^{3} \kappa^{i} =0 = \sum_{i=1}^{3} \lambda^{i}$.
 
 On the other hand, starting from \eqref{velcond} and taking the inner product with appropriate normals and tangents we get (with the convention that the superscripts are considered ``modulus 3'')
 \begin{align*}
 \psi(\theta^{i}) \kappa^{i}  = \psi(\theta^{i\pm 1}) \kappa^{i\pm 1} (\nu^{i\pm 1} \cdot \nu^{i}) + \lambda^{i\pm 1} (\tau^{i\pm 1} \cdot \nu^{i}) ,\\
  \lambda^{i}  =\psi(\theta^{i\pm 1}) \kappa^{i\pm 1} (\nu^{i\pm 1} \cdot \tau^{i}) + \lambda^{i\pm 1} (\tau^{i\pm 1} \cdot \tau^{i}).
 \end{align*}For the isotropic case where all constants and coefficients can be given explicitly see \cite[\S3]{MNPS}.
 The above system can be written as
 \begin{align*}
 \left( \begin{array}{cccc}
 (\nu^{i+1} \cdot \nu^{i}) & 0 & (\tau^{i+1} \cdot \nu^{i}) & 0\\
 0 & (\nu^{i-1} \cdot \nu^{i})& 0 &  (\tau^{i-1} \cdot \nu^{i}) \\
 (\nu^{i+1} \cdot \tau^{i}) & 0 & (\tau^{i+1} \cdot \tau^{i})& 0 \\
 0& (\nu^{i-1} \cdot \tau^{i}) & 0 & (\tau^{i-1} \cdot \tau^{i})
 \end{array} \right )
 \left ( \begin{array}{c}
 \psi(\theta^{i+1}) \kappa^{i+1}\\
 \psi(\theta^{i-1}) \kappa^{i-1}\\
 \lambda^{i+1}\\
 \lambda^{i-1}
 \end{array}
 \right) =
 \left ( \begin{array}{c}
 \psi(\theta^{i}) \kappa^{i}\\
 \psi(\theta^{i}) \kappa^{i}\\
 \lambda^{i}\\
 \lambda^{i}
 \end{array}
 \right) .
 \end{align*}
 Writing $\alpha = (\nu^{i+1} \cdot \nu^{i})$, $ \beta= (\tau^{i+1} \cdot \nu^{i})$, $\gamma=(\nu^{i-1} \cdot \nu^{i})$, $\delta=(\tau^{i-1} \cdot \nu^{i})$  we see that 
 above matrix has determinant equal to $det=(\alpha^{2}+\beta^{2})(\delta^{2}+\gamma^{2})$, which can never be zero  since $\alpha$ and $\beta$, respectively $\delta$ and $\gamma$, can not vanish simultaneously. Thus  we obtain
 \begin{align*}
 \left ( \begin{array}{c}
 \psi(\theta^{i+1}) \kappa^{i+1}\\
 \psi(\theta^{i-1}) \kappa^{i-1}\\
 \lambda^{i+1}\\
 \lambda^{i-1}
 \end{array}
 \right) =
 \left( \begin{array}{cccc}
 \frac{\alpha}{\alpha^{2} + \beta^{2}} & 0 & -\frac{\beta}{\alpha^{2} + \beta^{2}} & 0\\
 0 & \frac{\gamma}{\gamma^{2} +\delta^{2}}& 0 &  -\frac{\delta}{\gamma^{2} +\delta^{2}} \\
 \frac{\beta}{\alpha^{2} + \beta^{2}} & 0 & \frac{\alpha}{\alpha^{2} + \beta^{2}}& 0 \\
 0& \frac{\delta}{\gamma^{2} +\delta^{2}} & 0 & \frac{\gamma}{\gamma^{2} +\delta^{2}}
 \end{array} \right )
 \left ( \begin{array}{c}
 \psi(\theta^{i}) \kappa^{i}\\
 \psi(\theta^{i}) \kappa^{i}\\
 \lambda^{i}\\
 \lambda^{i}
 \end{array}
 \right) .
 \end{align*}
 From the first two equations we infer that if $\beta\neq0$ or $\delta\neq 0$ then we can express $\lambda^{i}$ as a linear combination of $\psi(\theta^{i}) \kappa^{i}$ and $\psi(\theta^{i\pm 1}) \kappa^{i\pm1}$.
 By \eqref{a0} we know  that in fact  $|\beta|$ and $|\delta|$ are bounded from below.
In particular we obtain that
\begin{align}\label{expressionlambda}
\lambda^{i}= \frac{\alpha}{\beta}\psi(\theta^{i}) \kappa^{i} -\frac{1}{\beta} \psi(\theta^{i+1}) \kappa^{i+1} \quad \text{ and } \quad  \lambda^{i}= \frac{\gamma}{\delta}\psi(\theta^{i}) \kappa^{i} -\frac{1}{\delta} \psi(\theta^{i-1}) \kappa^{i-1},
\end{align}
so that
\begin{align}\label{boundlbdry}
|\lambda^{i}| \leq C \sum_{j=1}^{3}|\psi(\theta^{j})\kappa^{j}| \qquad \qquad i=1,2,3
\end{align}
at the triple junction with $C=C(a_{0})$ depending on the anisotropy. 

For the analysis that follows we will also need  expressions for  the time derivative $\lambda_{t}^{i}$.  Using \eqref{expressionlambda} we can write
\begin{align}\label{derla}
|\lambda_{t}^{i}|  \leq \left|\left(\frac{\alpha}{\beta} \right)_{t} \right| | \psi(\theta^{i}) \kappa^{i}| +
C |( \psi(\theta^{i}) \kappa^{i})_{t}| +\left|\left(\frac{1}{\beta} \right)_{t} \right| | \psi(\theta^{i+1}) \kappa^{i+1}| +
C |( \psi(\theta^{i+1}) \kappa^{i+1})_{t}|
\end{align}
with $C=C(a_{0})$ depending on the anisotropy. 
\begin{lemma} The total anisotropic length of the network decreases in time along the evolution.
\end{lemma}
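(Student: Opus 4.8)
The plan is to differentiate the total anisotropic energy
$E(\mathbb{S})=\sum_{i=1}^3 L_\varphi(u^i)=\sum_{i=1}^3\int_I \varphi^\circ(\nu^i)\,ds$,
apply the first-variation identity \eqref{dtlan} to each curve, and show that all boundary contributions cancel, leaving a manifestly nonpositive bulk term. A geometric solution satisfies $u^i_t=\psi(\theta^i)\kappa^i\nu^i+\lambda^i\tau^i$, i.e.\ \eqref{acsfT} with $\lambda=\lambda^i$, so \eqref{dtlan} is available for each $u^i$, and summing over $i$ gives
\begin{align*}
\frac{d}{dt}E(\mathbb{S})=-\sum_{i=1}^3\int_I(\kappa_\varphi^i)^2\,\varphi^\circ(\nu^i)\,ds+\sum_{i=1}^3\Big[\varphi^\circ(\nu^i)\lambda^i-\psi(\theta^i)\kappa^i\,D\varphi^\circ(\nu^i)\cdot\tau^i\Big]_0^1.
\end{align*}
Since $\varphi^\circ>0$ on $S^1$, the bulk integral is $\leq 0$, so it remains to check that the boundary sum vanishes.

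At the fixed endpoints $x=1$ one has $u^i(t,1)=P^i$, hence $u^i_t(t,1)=0$; decomposing $u^i_t=\psi(\theta^i)\kappa^i\nu^i+\lambda^i\tau^i$ into its normal and tangential parts and using that $\nu^i,\tau^i$ are orthonormal yields $\psi(\theta^i)\kappa^i=0$ (equivalently $\kappa_\varphi^i=0$) and $\lambda^i=0$ at $x=1$. Hence the bracket evaluated at $x=1$ is zero for each $i$, and the boundary sum reduces to
$-\sum_{i=1}^3\big(\varphi^\circ(\nu^i)\lambda^i-\psi(\theta^i)\kappa^i\,D\varphi^\circ(\nu^i)\cdot\tau^i\big)\big|_{x=0}$,
where $x=0$ is the triple junction.

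This expression at the triple junction is exactly (minus) the left-hand side of \eqref{22}: that identity is obtained by rotating the velocity-continuity condition \eqref{velcond} (valid since $u^1(t,0)=u^2(t,0)=u^3(t,0)$) by $\pi/2$, pairing with $D\varphi^\circ(\nu^i)$, summing over $i$, and invoking the anisotropic Herring condition \eqref{HC} together with the homogeneity relation $D\varphi^\circ(\nu^i)\cdot\nu^i=\varphi^\circ(\nu^i)$. Therefore the boundary sum is $0$, and we conclude
\begin{align*}
\frac{d}{dt}E(\mathbb{S})=-\sum_{i=1}^3\int_I(\kappa_\varphi^i)^2\,\varphi^\circ(\nu^i)\,ds\leq 0,
\end{align*}
with equality only when all $\kappa_\varphi^i$ vanish, i.e.\ when $\mathbb{S}$ is anisotropically stationary. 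There is no genuine analytic difficulty here; the only point requiring care is the boundary bookkeeping — namely recognizing that the fixed-endpoint condition annihilates the bracket at $x=1$ and that the junction bracket is precisely the cancellation identity \eqref{22}.
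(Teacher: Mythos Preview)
Your proof is correct and follows exactly the same approach as the paper: sum \eqref{dtlan} over the three curves, use $u^i_t=0$ at the fixed endpoints to kill the $x=1$ contributions, and invoke \eqref{22} to cancel the triple-junction terms. The only difference is that you spell out in more detail how \eqref{22} arises from \eqref{velcond} and \eqref{HC}, which the paper takes as already established in Section~\ref{behav}.
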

\begin{proof}
The statement follows by adding the contribution of each curve as computed in \eqref{dtlan}, using \eqref{22} at the triple junction, and the fact that $\lambda^{i}=0=\kappa^{i}$ at the fixed points $P^{i}$, $i=1,2,3$ (this follows from \eqref{acsfT} and $\partial_{t} u^{i} =0$ at $P_{i}$).
\end{proof}

\subsubsection{Special Flow: behavior of $\lambda^{i}$ in the interior points}
In the following we assume that \eqref{SpecialFlow} holds for every curve of the network and that we have a uniform bound on the curvatures, namely
\begin{align*}
\sum_{i=1}^{3}\sup_{t \in [0,T]} \| \kappa^{i}(t, \cdot)\|_{L^{\infty}} \leq C_{0}.
\end{align*}
 Since the following considerations hold for any curve of the network we drop the indices for simplicity  of notation.
Upon recalling \eqref{acsfT} let us denote with $V$ the length of the velocity vector. Then 
\begin{align}
V^{2}=|u_{t}|^{2} = (\psi(\theta)\kappa)^{2} +\lambda^{2}
\end{align}
Using Lemma~\ref{lemma2.1} (in particular also \eqref{l_{t}}) we observe that $w:=V^{2}$ satisfies (cp. with \cite[page~263]{MNT} for the isotropic case)
\begin{align*}
w_{t}&= \psi(\theta) w_{ss} -\lambda w_{s} + 2 \psi(\theta) \kappa^{2} w 
-2 \psi(\theta) [(\psi(\theta) \kappa)_{s}]^{2} -2 \psi(\theta) (\lambda_{s})^{2} + N
\end{align*}
where
\begin{align*}
N&= 2 ((\psi(\theta) \kappa)_{s} +\lambda \kappa) \left( (\psi(\theta) \kappa) (\psi(\theta))_{s}+ \lambda^{2}
\frac{\psi'(\theta)}{\psi(\theta)}\right) = 2 ((\psi(\theta) \kappa)_{s} +\lambda \kappa) \frac{\psi'(\theta)}{\psi(\theta)} ((\psi(\theta)\kappa)^{2} +\lambda^{2})\\
&=2 ((\psi(\theta) \kappa)_{s} +\lambda \kappa) \frac{\psi'(\theta)}{\psi(\theta)} w
=2 \theta_{t} \frac{\psi'(\theta)}{\psi(\theta)} w = 2(\ln \psi(\theta))_{t} w.
\end{align*}
Note that $N$ vanishes in the isotropic case. Bringing $N$ to the left-hand side and multiplying both side of the equation with $e^{-2 \ln  \psi(\theta)}$
we obtain
\begin{align*}
(w e^{-2 \ln  \psi(\theta)})_{t}&=\psi(\theta) e^{-2 \ln  \psi(\theta)} w_{ss} -\lambda e^{-2 \ln  \psi(\theta)} w_{s} + 2 \psi(\theta) \kappa^{2} w e^{-2 \ln  \psi(\theta)}\\
&\quad - \big(2 \psi(\theta) [(\psi(\theta) \kappa)_{s}]^{2} +2 \psi(\theta) (\lambda_{s})^{2} \big) e^{-2 \ln  \psi(\theta)}
\end{align*}
If $w(t, \cdot)=V^{2}(t, \cdot) \geq0$ does not take its maximum at the boundary (where $\kappa$ and hence $\lambda$, recall \eqref{boundlbdry}, are controlled by assumption) then
it achieves its maximum $w_{max}(t)= \max_{[0,1]} w(t, \cdot)$ in an interior point. By Hamilton' trick (\cite[Lemma~2.1.3]{mantegazza}) we have that $\frac{\partial}{\partial t}w_{max}(t) = w_{t} (t,x_{max}) $ where $x_{max} \in(0,1)$ is an interior point where $w(t, \cdot)$ assumes its maximum. Then
\begin{align*}
(w_{max} e^{-2 \ln  \psi(\theta)})_{t} \leq 2 \psi(\theta) \kappa^{2} w_{max} e^{-2 \ln  \psi(\theta)} \leq C w_{max} e^{-2 \ln  \psi(\theta)}
\end{align*}
where $C$ depends on $C_{0}$ and on the anisotropy map (recall \eqref{boundmpiccolo}).  Gronwall's inequality yields
\begin{align*}
w_{max} e^{-2 \ln  \psi(\theta)} \leq  e^{CT} ( w_{max} e^{-2 \ln  \psi(\theta)})|_{t=0}.
\end{align*}
It follows that $V^{i}$ and $\lambda^{i}$ are uniformly bounded on $[0,T)$ for $i=1,2,3$.

\section{Short-time existence for the Special Flow }
\label{sec:2}
The aim of this section is to establish a short time existence result for the special anisotropic curve shortening flow (recall Definition~\ref{def:SF} and \eqref{SpecialFlow}). More precisely, given an initial network $\sigma:=( \sigma^{1}, \sigma^{2}, \sigma^{3})$ of sufficiently smooth regular curves satisfying appropriate boundary conditions (see below) we look for $T>0$ and $u^{i}: [0,T] \times [0,1] \to \R^{2}$, $u^{i} \in C^{\frac{2+\alpha}{2}, 2+\alpha}([0,T] \times [0,1])$, $i=1,2,3$, $\alpha \in (0,1)$ such that
\begin{align}\label{OP1}
 u^{i}_{t} = \psi(\theta^{i} )\frac{u^{i}_{xx}}{|u^{i}_{x}|^{2}}=\varphi^{\circ}(\nu^{i}) (D^2 \varphi^\circ(\nu^{i}) \tau^{i} \cdot \tau^{i})  \frac{u_{xx}^{i}}{|u_{x}^{i}|^{2}}  \qquad \qquad i=1,2,3,
\end{align}
with initial datum $u^{i}(0, \cdot)=\sigma^{i}(\cdot)$ and boundary conditions
\begin{align}\label{OP2}
\left\{ \begin{array}{lr}
u^{i}(t,1) =P^{i} &  \text{ for all } t \in [0,T], \, i=1,2,3,\\
u^{1}(t,0)=u^{2}(t,0)=u^{3}(t, 0) &  \text{ for all } t \in [0,T],\\
\sum_{i=1}^{3} D \varphi^{\circ} (\nu^{i}(t,0)) =0 &  \text{ for all } t \in [0,T].
\end{array}
\right.
\end{align}

We assume that $\sigma^{i} \in C^{2, \alpha}([0,1], \R^{2})$, $i=1,2,3$, are regular maps fulfilling the following compatibility conditions:
\begin{align}\label{cc1}
\left\{ \begin{array}{lr}
\sigma^{i}(1) =P^{i} &    i=1,2,3,\\
\sigma^{1}(0)=\sigma^{2}(0)=\sigma^{3}(0) &  \\
\sum_{i=1}^{3} D \varphi^{\circ} (\nu^{i}_{0}) =0 &  \text{ where we set  } \nu^{i}_{0}:= \frac{(\sigma^{i}_{x})^{\perp}}{|\sigma^{i}_{x}|},
\end{array}
\right.
\end{align}
as well as
\begin{align}\label{cc2}
&\frac{\sigma^{i}_{xx}}{|\sigma^{i}_{x}|^{2}} =0 \qquad  \text{ at }  x=1 \qquad \text{ for } i=1,2,3\\\label{cc3}
& \psi(\theta^{i}_{0})\frac{\sigma^{i}_{xx}}{|\sigma^{i}_{x}|^{2}}=
 \psi(\theta^{j}_{0})\frac{\sigma^{j}_{xx}}{|\sigma^{j}_{x}|^{2}} \qquad   \text{ at }  x=0 \qquad  \text{ for } i, j \in 
 \{ 1,2,3 \}
\end{align} 
where 
\begin{align*}
\psi(\theta^{i}_{0}):=\varphi^{\circ}(\nu^{i}_{0}) (D^2 \varphi^\circ(\nu^{i}_{0}) \frac{\sigma^{i}_{x}}{|\sigma^{i}_{x}|}\cdot \frac{\sigma^{i}_{x}}{|\sigma^{i}_{x}|}).  
\end{align*}

Existence and uniqueness in the isotropic case have been shown in Bronsard and Reitich~\cite{BR}.
There the short-time existence proof is carried out in three steps: first a linearization around the initial data is performed, second the classical theory for parabolic system is used to prove existence for the linearized system, third a fixed-point argument is applied to obtain  short-time existence for the original non-linear problem.
Due to the presence of the anisotropy map the problem is now clearly highly nonlinear 
and some details require attention.
In the following we provide the main arguments. With respect to \cite{BR}
one striking difference consists in the treatment of the boundary condition at the triple junction. In the isotropic case \eqref{HC} yields $\tau^{1}+\tau^{2}+\tau^{3}=0$, which gives an angle condition described in \cite[eq.(28)]{BR} as $\tau^{1} \cdot \tau^{2}= \cos (2\pi/3)=\tau^{2} \cdot \tau^{3}$. The latter two equations are then accordingly linearized around the initial datum.
Here we need to work with \eqref{HC} directly, since $\varphi^{\circ}$ is a given arbitrary (smooth and elliptic) anisotropy map.

\smallskip
\noindent\textbf{Function spaces and notation.}
For the convenience of the reader let us  recall the definition of the parabolic H\"older spaces (recall \cite[page 66 and 91]{Sol}) and fix some notation.

For a function $v : [0,T] \times [0,1] \to \R$ and $\rho \in (0,1)$ we let
\begin{align*}
[v]_{\rho,x}&:= \sup_{(t,x), (t,y) \in [0,T] \times [0,1]} \frac{|v(t,x)-v(t,y) |}{|x-y|^{\rho}},\\
[v]_{\rho,t}&:= \sup_{(t,x), (t',x) \in [0,T] \times [0,1]} \frac{|v(t,x)-v(t',x) |}{|t-t'|^{\rho}}.
\end{align*}
For $\alpha \in (0,1)$ and $k \in \mathbb{N}_0$ we define 
$C^{ \frac{k+\alpha}{2},k+\alpha} ([0,T] \times [0,1]) $ 
to be the space of all maps $v : [0,T] \times [0,1] \to \R$ with continuous derivatives $\partial_{t}^{i}\partial_{x}^{j}v$ for  $i, j \in \mathbb{N} \cup \{0 \}$ with $2i+j \leq k$ and such that the norm 
\begin{align*}
\|v \|_{ C^{ \frac{k+\alpha}{2}, k+\alpha}([0,T] \times [0,1]) } &:= \sum_{2i+j=0}^{k} \sup_{(t,x) \in [0,T] \times [0,1]} |\partial_{t}^{i}\partial_{x}^{j}v (t,x)| \\
& \quad+ \sum_{2i+j=k} [\partial_{t}^{i}\partial_{x}^{j}v ]_{\alpha,x} + \sum_{0<k+\alpha-2i-j <2}  [\partial_{t}^{i}\partial_{x}^{j}v ]_{\frac{k+\alpha -2i-j}{2},t}
\end{align*}
is finite. Note that $C^{ \frac{2+\alpha}{2},2+\alpha} ([0,T] \times [0,1]) \subset C^{ \frac{1+\alpha}{2},1+\alpha} ([0,T] \times [0,1]) \subset C^{ \frac{\alpha}{2},\alpha} ([0,T] \times [0,1])$.
We adopt the following conventions:
 \begin{itemize}
 \item in the proofs, and whenever clear from the context, we do not write the set of the parabolic H\"older spaces. In other words  we simply write  $\|v \|_{C^{\frac{k+\alpha}{2}, k+\alpha}}$ instead of $\|v \|_{ C^{ \frac{k+\alpha}{2}, k+\alpha}([0,T] \times [0,1]) }$; 
 \item for H\"older norms  on spaces in only one variable we always write the set, for instance in $C^{2,\alpha}([0,1])$ or $C^{0,\frac{\alpha}{2}}([0,T])$; 
\item the  $C^{\frac{k+\alpha}{2},k+\alpha }$-norm of a  \emph{vector-valued} map is the sum of the norms of its components. 
\end{itemize}
Useful lemmas for parabolic H\"older spaces are collected in Appendix~\ref{AppA}.

\subsection{Linearized Problem}
For some $ 0 <T<1$ and $M>0$ to be chosen later on (cf.~\eqref{choiceM})  define
\begin{align}\label{defXi}
X_{i}= \{ v \in C^{\frac{2+\alpha}{2}, 2+\alpha}([0,T] \times [0,1]; \R^{2})\, : \| v \|_{C^{\frac{2+\alpha}{2}, 2+\alpha}} \leq M, v(0, \cdot)=\sigma^{i}(\cdot) \} 
\end{align}
for $i=1,2,3$.
Furthermore let $\delta:= \min \{ |\sigma^{i}_{x}(x)|\,: \, x \in [0,1], \text{ and } i \in \{ 1,2,3 \} \}$. It is $\delta>0$.
Upon considering $\sigma^{i}$ as a map $\sigma^{i} \in C^{\frac{2+\alpha}{2}, 2+\alpha}([0,T] \times [0,1]; \R^{2})$ by extending it as a constant function in time, similar reasoning as in \cite[Lemma~3.1]{DLP-STE} (using now Lemma~\ref{B5}) yields that it is possible to choose $T=T(M, \delta, \sigma)$ so small in the definition of $X_{i}$ above so that any map $v \in X_{i}$ is regular for all times. 
From now on we assume that $T$ is fixed in such a way that the regularity of the curves is guaranteed, that is
\begin{align}\label{eq:immersion}
|\bar{u}_{x}^{i}(t,x)| \geq \frac12\delta \text{ for all } (t,x) \in [0,T] \times [0,1]
\end{align}
for any $\bar{u}^{i} \in X_{i}$, $i=1,2,3$.
As in \cite{BR} we seek a fixed point of the map
\begin{align}\label{defiR}
\mathcal{R}: \prod_{j=1}^{3} X_{j} &\to  \prod_{j=1}^{3} X_{j} 
\\
\bar{u}=(\bar{u}^{1}, \bar{u}^{2}, \bar{u}^{3}) &\mapsto \mathcal{R}\bar{u}= u= (u^{1}, u^{2}, u^{3}) \notag
\end{align}
where  $u$ solves the  following linearized system, which we refer to as the linear problem.

\smallskip

\noindent \textbf{The Linear Problem (LP)}
Given $\bar{u}=(\bar{u}^{1}, \bar{u}^{2}, \bar{u}^{3}) \in \prod_{j=1}^{3} X_{j}$
we look for  $u =(u^{1}, u^{2}, u^{3})$, $u \in \prod_{j=1}^{3}  C^{\frac{2+\alpha}{2}, 2+\alpha}([0,T] \times [0,1]; \R^{2})$ solution to
\begin{align}
u_{ t}^{j} -D_{j}u_{xx}^{j} &= f^{j}\\
u^{j}(0, x)=\sigma^{j}(x)
\end{align}
 where
\begin{align*}
D_{j}= \frac{\psi(\theta^{j}_{0})}{|\sigma_{x}^{j}|^{2}}>0, \qquad  f^{j}:= \left(\frac{\psi(\bar{\theta}^{j})}{|\bar{u}_{x}^{j}|^{2}} - \frac{\psi(\theta^{j}_{0})}{|\sigma_{x}^{j}|^{2}} \right) \bar{u}_{xx}^{j} \in \R^{2}
\end{align*}
for $j=1,2,3$, with (the linearized) boundary conditions (recall \eqref{OP2} and $\nu^{i}_{0}= \frac{(\sigma^{i}_{x})^{\perp}}{|\sigma^{i}_{x}|}$)
\begin{align*}
&u^{i}(t,1)=P^{i}  \qquad \forall t \in (0,T), \quad i=1,2,3 \\
&u^{1}(t,0)= u^{2}(t,0) =u^{3}(t,0)   \qquad \forall t \in (0,T)\\
&\sum_{i=1}^{3} \left( \varphi^{\circ}(\nu^{i}_{0}) \frac{(u_{x}^{i})^{\perp}}{|\sigma^{i}_{x}|} + (D\varphi^{\circ}(\nu^{i}_{0}) \cdot \frac{\sigma^{i}_{x}}{|\sigma_{x}^{i}|}) \frac{u^{i}_{x}}{|\sigma_{x}^{i}|} \right) =
\sum_{i=1}^{3} \left( \varphi^{\circ}(\nu^{i}_{0}) \frac{1}{|\sigma^{i}_{x}|} -  \varphi^{\circ}(\bar{\nu}^{i}) \frac{1}{|\bar{u}^{i}_{x}|}  \right)(\bar{u}_{x}^{i})^{\perp} \\
& \qquad \qquad + \left( (D\varphi^{\circ}(\nu^{i}_{0}) \cdot \frac{\sigma^{i}_{x}}{|\sigma_{x}^{i}|})\frac{1}{|\sigma_{x}^{i}|}  -
(D\varphi^{\circ}(\bar{\nu}^{i}) \cdot \bar{\tau}^{i}) \frac{1}{|\bar{u}_{x}^{i}|}\right)\bar{u}_{x}^{i}=:\bar{b} \qquad \forall t \in (0,T).
\end{align*}

\smallskip
\noindent \textbf{Solution of the linear problem (LP)}

As in \cite{BR} we follow the theory developed in \cite{Sol}. The above system can be written as $\mathcal{L}(x,t,\partial_{x}, \partial_{t})u$, with $\mathcal{L}(x,t,\partial_{x}, \partial_{t})=diag (l_{kk})_{k=1}^{6}$ where
\begin{align*}
l_{kk}(x,t,\partial_{x}, \partial_{t}) = \partial_{t} - D_{i} \partial_{x}^{2} \qquad \text{ if } k=2(i-1)+j
\end{align*}
 for some $j \in \{1,2 \}$ and $i \in \{ 1,2,3 \}$. In the following let  (for $\mathrm{i}=\sqrt{-1}$, $\xi\in \R$, $p \in \mathbb{C}$ )
 \begin{align*}
 L(x,t,\mathrm{i} \xi, p)&:= \det \mathcal{L}(x,t,\mathrm{i} \xi, p)=\prod_{i=1}^{3}(p+D_{i}\xi^{2})^{2}\\
 \hat{\mathcal{L}}(x,t,\mathrm{i} \xi, p) &:=L(x,t,\mathrm{i} \xi, p) \mathcal{L}^{-1}(x,t,\mathrm{i} \xi, p)=diag(A_{kk})_{k=1}^{6}
 \end{align*}
 with
 \begin{align*}
 A_{kk}=A_{kk}(x,t,\mathrm{i} \xi, p)=\frac{\prod_{i=1}^{3}(p+D_{i}\xi^{2})^{2}}{p+ D_{l}\xi^{2}} \qquad \text{if } k=2(l-1)+j
 \end{align*}
for $l\in \{1,2 ,3\}$ and $j \in \{1,2\}$. Since many terms coincide in the following we simply write
\begin{align*}
A_{1}:=A_{11}=A_{22}, \qquad A_{2}:=A_{33}=A_{44}, \qquad A_{3}:=A_{55}=A_{66}.
\end{align*}
As in \cite{BR} we note that the \emph{parabolicity condition} \cite[p.~8]{Sol} is fulfilled since for any $i=1,2,3$ we have that $$D_{i} \geq m \cdot \min \left\{ \frac{1}{|\sigma_{x}^{j}(x)|^{2}}\, :\, j=1,2,3, \, x \in [0,1] \right\} >0 $$
where $m$ is as in \eqref{boundmpiccolo}.

At the boundary we need to check the so-called \emph{complementary conditions} \cite[p.~11]{Sol}. First of all we consider the system of boundary conditions at the junction point at $x=0$.
Here the system reads $\mathcal{B}u=\left(\begin{array}{c}0\\ 0\\ \bar{b}  \end{array}\right)$ where $u=(u^{1}, u^{2}, u^{3}) \in \R^{6}$ with $\mathcal{B}$ a $6\times6$ matrix given by

\begin{align*}
\mathcal{B}(x=0,t, \partial_{x}, \partial_{t})=
\left(\begin{array}{ccc}
Id & -Id & 0 \\
0 & Id & -Id \\
Q_{1} & Q_{2} &Q_{3}
\end{array} \right)
\end{align*}
where each block entry is a $(2\times2)$ matrix with
\begin{align*}
Q_{i}:=\frac{\varphi^{\circ}(\nu^{i}_{0})}{|\sigma^{i}_{x}|} 
\left(\begin{array}{cc}
0& -\partial_{x}\\
\partial_{x}& 0
\end{array} \right)
 + (D\varphi^{\circ}(\nu^{i}_{0}) \cdot \frac{\sigma^{i}_{x}}{|\sigma_{x}^{i}|^{2}} ) \left(\begin{array}{cc}
\partial_{x}& 0\\
0& \partial_{x}
\end{array} \right) 
\end{align*}
with all coefficients evaluated at $x=0$.
Therefore we obtain
\begin{align*}
\mathcal{B}(x=0,t, \mathrm{i}\tau, p)= \left(
\begin{array}{cccccc}
1&0&-1& 0&0&0\\0&1&0&-1&0&0\\
0&0&1&0&-1&0 \\ 0&0&0&1&0&-1\\
\mathrm{i}\tau b_{51} & -\mathrm{i}\tau b_{52} &  \mathrm{i}\tau b_{53} & -\mathrm{i}\tau b_{54}& \mathrm{i}\tau b_{55} & -\mathrm{i}\tau b_{56}\\
\mathrm{i}\tau b_{52} & \mathrm{i}\tau b_{51} &\mathrm{i}\tau b_{54} & \mathrm{i}\tau b_{53}&\mathrm{i}\tau b_{56} & \mathrm{i}\tau b_{55}
\end{array}
\right)
\end{align*}
where
\begin{align*}
b_{51}&=(D\varphi^{\circ}(\nu^{1}_{0}) \cdot \frac{\sigma^{1}_{x}}{|\sigma_{x}^{1}|^{2}} ),  & b_{52}=\frac{\varphi^{\circ}(\nu^{1}_{0})}{|\sigma^{1}_{x}|} \\
b_{53}&=(D\varphi^{\circ}(\nu^{2}_{0}) \cdot \frac{\sigma^{2}_{x}}{|\sigma_{x}^{2}|^{2}} ),  & b_{54}=\frac{\varphi^{\circ}(\nu^{2}_{0})}{|\sigma^{2}_{x}|}\\
b_{55}&=(D\varphi^{\circ}(\nu^{3}_{0}) \cdot \frac{\sigma^{3}_{x}}{|\sigma_{x}^{3}|^{2}} ),  & b_{56}=\frac{\varphi^{\circ}(\nu^{3}_{0})}{|\sigma^{3}_{x}|}
\end{align*}
with all expressions evaluated at $x=0$. In the isotropic case $b_{51}=b_{53}=b_{55}=0$ and 
$\frac{\varphi^{\circ}(\nu^{i}_{0})}{|\sigma^{i}_{x}|} = \frac{1}{|\sigma^{i}_{x}|}$.
Next note that as a function of $\tau$ the polynomial $L(x,t,\mathrm{i} \tau, p)$ has six roots with positive imaginary parts and six roots with negative imaginary parts provided $Re(p) \geq 0$ and $p \neq0$. More precisely writing $p= |p| e^{\mathrm{i} \theta_{p}}$ with $-\pi/2 \leq \theta_{p} \leq \pi/2$ and $|p|\neq0$ we may write
\begin{align*}
L(x,t,\mathrm{i} \tau, p)=\prod_{i=1}^{3}D_{i}^{2}(\tau-\tau_{i}^{+})^{2}(\tau-\tau_{i}^{-})^{2}
\end{align*}
with 
\begin{align*}
\tau_{i}^{+ }=\tau_{i}^{+ }(x,p) = \sqrt{\frac{|p|}{D_{i}}} e^{\mathrm{i}\left(\frac{\pi}{2}+ \frac{\theta_{p}}{2} \right)}=\mathrm{i} \sqrt{\frac{p}{D_{i}}}\qquad \qquad 
\tau_{i}^{- }=\tau_{i}^{- }(x,p)= \sqrt{\frac{|p|}{D_{i}}} e^{\mathrm{i} \left(\frac{3\pi}{2}+ \frac{\theta_{p}}{2} \right)} =-\mathrm{i} \sqrt{\frac{p}{D_{i}}}.
\end{align*}
Following \cite[p.~11]{Sol} we set
\begin{align*}
M^{+}=M^{+}(x,\tau, p)= \prod_{i=1}^{3}(\tau-\tau_{i}^{+})^{2}.
\end{align*}
By \cite[p.~11]{Sol} the complementary condition at $x=0$ is satisfied  if the rows of the matrix
\begin{align*}
\mathcal{A}(x=0,t,\mathrm{i} \tau, p):=\mathcal{B}(x=0,t, \mathrm{i}\tau, p)\hat{\mathcal{L}}(x=0,t, \mathrm{i}\tau, p)
\end{align*}
are linearly independent modulo $M^{+}$ whereby $p\neq 0$, $Re (p) \geq0$. Therefore we need to verify that if there exists $w \in 
\R^{6}$ such that
\begin{align*}
w^{T} \cdot \mathcal{A}(x=0,t,\mathrm{i} \tau, p) = (0,0,0,0,0,0) \mod M^{+}
\end{align*}
then $w=\vec{0}$.
This gives the six equations
\begin{align*}
&A_{1}(w_{1}+ w_{5}\mathrm{i} \tau b_{51} + w_{6}\mathrm{i} \tau b_{52}) =0 \mod M^{+}\\
& A_{1}(w_{2} -w_{5}\mathrm{i} \tau b_{52}+  w_{6}\mathrm{i} \tau b_{51} )=0 \mod M^{+}\\
&A_{2} (-w_{1}+w_{3} +w_{5}\mathrm{i} \tau b_{53}+ w_{6}\mathrm{i} \tau b_{54})=0 \mod M^{+}\\
&A_{2} (-w_{2}+w_{4} -w_{5}\mathrm{i} \tau b_{54}+  w_{6}\mathrm{i} \tau b_{53} )=0 \mod M^{+}\\
&A_{3} (-w_{3}+ w_{5}\mathrm{i} \tau b_{55} + w_{6}\mathrm{i} \tau b_{56})=0 \mod M^{+}\\
&A_{3} (-w_{4}-w_{5}\mathrm{i} \tau b_{56}+  w_{6}\mathrm{i} \tau b_{55} )=0 \mod M^{+}.
\end{align*}
Using the fact that $A_{i}$ and $M^{+}$ have many factors in common, we infer that the first equation in equivalent to
\begin{align*}
p_{1}(\tau)(w_{1}+ w_{5}\mathrm{i} \tau b_{51} + w_{6}\mathrm{i} \tau b_{52})=0 \mod (\tau- \tau_{1}^{+})
\end{align*}
where 
\begin{align*}
p_{1}(\tau)=(\tau-\tau_{1}^{-})(\tau-\tau_{2}^{-})^{2}(\tau-\tau_{3}^{-})^{2}.
\end{align*}
Since $(\tau- \tau_{1}^{+})$ can not divide $p_{1}(\tau)$ then $\tau_{1}^{+}$ must be a root of the remaning linear factor.
Reasoning in a similar way for the other five equations we obtain that $w$ must satisfy the system
\begin{align*}
w_{1} + w_{5}\mathrm{i}  b_{51}\tau_{1}^{+} + w_{6}\mathrm{i}  b_{52}\tau_{1}^{+}&=0\\
w_{2} -w_{5}\mathrm{i} \tau_{1}^{+} b_{52}+  w_{6}\mathrm{i} \tau_{1}^{+} b_{51} &=0\\
-w_{1}+w_{3} +w_{5}\mathrm{i} \tau_{2}^{+} b_{53}+ w_{6}\mathrm{i} \tau_{2}^{+} b_{54}&=0\\
-w_{2}+w_{4} -w_{5}\mathrm{i} \tau_{2}^{+} b_{54}+  w_{6}\mathrm{i} \tau_{2}^{+} b_{53} &=0\\
-w_{3}+ w_{5}\mathrm{i} \tau_{3}^{+} b_{55} + w_{6}\mathrm{i} \tau_{3}^{+} b_{56}&=0\\
-w_{4}-w_{5}\mathrm{i} \tau_{3}^{+} b_{56}+  w_{6}\mathrm{i} \tau_{3}^{+} b_{55}&=0
\end{align*}
for whose determinant we compute
\begin{align*}
& \det \left(\begin{array}{cccccc}
1&0&0&0&\mathrm{i}  b_{51}\tau_{1}^{+} &\mathrm{i}  b_{52}\tau_{1}^{+}\\
0&1&0&0&-\mathrm{i} \tau_{1}^{+} b_{52} & \mathrm{i} \tau_{1}^{+} b_{51}\\
-1&0&1&0&\mathrm{i} \tau_{2}^{+} b_{53} & \mathrm{i} \tau_{2}^{+} b_{54}\\
0&-1&0&1&-\mathrm{i} \tau_{2}^{+} b_{54} &\mathrm{i} \tau_{2}^{+} b_{53}\\
0&0&-1&0&\mathrm{i} \tau_{3}^{+} b_{55}& \mathrm{i} \tau_{3}^{+} b_{56}\\
0&0&0&-1&-\mathrm{i} \tau_{3}^{+} b_{56} &\mathrm{i} \tau_{3}^{+} b_{55}
\end{array} \right) \\
 &= - (b_{52} \tau_{1}^{+}+ b_{54} \tau_{2}^{+} + b_{56}\tau_{3}^{+})^{2}  -(b_{51} \tau_{1}^{+}+ b_{53} \tau_{2}^{+} + b_{55}\tau_{3}^{+})^{2}\\
 & = - \left(\sum_{i=1}^{3} \mathrm{i} 
  \frac{\varphi^{\circ}(\nu^{i}_{0})}{|\sigma_{x}^{i}|}\sqrt{p\frac{|\sigma_{x}^{i}|^{2}}{\psi(\theta^{i}_{0})}}\right)^{2} 
  - \left(\sum_{i=1}^{3} \mathrm{i} 
  \frac{D\varphi^{\circ}(\nu^{i}_{0}) \cdot \sigma_{x}^{i}}{|\sigma_{x}^{i}|^{2}}\sqrt{p\frac{|\sigma_{x}^{i}|^{2}}{\psi(\theta^{i}_{0})}}\right)^{2}\\
  & = p\left(\sum_{i=1}^{3}  
  \frac{\varphi^{\circ}(\nu^{i}_{0})}{\sqrt{\psi(\theta^{i}_{0})}}\right)^{2} + p\left(\sum_{i=1}^{3}  
  \frac{D\varphi^{\circ}(\nu^{i}_{0}) \cdot \sigma_{x}^{i}}{|\sigma_{x}^{i}|}\sqrt{\frac{1}{\psi(\theta^{i}_{0})}}\right)^{2}
\neq 0
 \end{align*}
 since $p \neq 0$ and $\left(\sum_{i=1}^{3}  
  \frac{\varphi^{\circ}(\nu^{i}_{0})}{\sqrt{\psi(\theta^{i}_{0})}}\right)^{2} >0$. It follows that $w=\vec{0}$ and the complementary condition at $x=0$ is fulfilled. Checking the complementary condition at $x=1$ is done in a similar way, but here computations are much simplier since $\mathcal{B}(x=1,t, \mathrm{i}\tau, p)$ is  given by the identity matrix.
  
 Finally  we observe that at $t=0$ the initial condition is given by the system $\mathcal{C}u=\sigma$ where $\mathcal{C}\in \R^{6\times 6}$ is  the identity matrix. The \emph{complementary condition}  here (cf. \cite[p~12]{Sol}) requires that the rows of the matrix
 $\mathcal{D}(x,p)=\mathcal{C}\cdot \hat{\mathcal{L}}(x,0,0,p)$ are linearly independent modulo $p^{6}$ at each point $x \in (0,1)$.
 This is readily checked.
 
Using \eqref{cc1}, \eqref{cc2}, \eqref{cc3} and the definition of the spaces $X_{j}$ we also observe that the linear problem fulfills the compatibility conditions of order zero (cf. \cite[p.~98]{Sol}). Application of \cite[Thm.~4.9]{Sol}
 yields the existence of a unique solution $u \in \prod_{j=1}^{3}  C^{\frac{2+\alpha}{2}, 2+\alpha}([0,T] \times [0,1]; \R^{2})$ satisying
\begin{align}\label{boundU}
&\sum_{i=1}^{3} \|  u^{i} \|_{C^{\frac{2+\alpha}{2}, 2+\alpha}([0,T] \times [0,1])} \\
& \leq C_{0} \left( 
\sum_{i=1}^{3} ( \| f^{i}\|_{C^{\frac{\alpha}{2},\alpha}([0,T] \times [0,1])} + \| \sigma^{i}\|_{C^{2,\alpha}([0,1])} +|P^{i}| )
+ \| \bar{b} \|_{C^{0, \frac{1+\alpha}{2}}([0,T])}
\right). \notag
\end{align}
 
 \subsection{Fixed point argument}
 
 Let $u=(u^{1}, u^{2}, u^{3}) \in \prod_{j=1}^{3}  C^{\frac{2+\alpha}{2}, 2+\alpha}([0,T] \times [0,1]; \R^{2})$ be the solution of the linear problem \textbf{(LP)}. We would like to verify the self-map and self-contraction property of the operator $\mathcal{R}$ (recall \eqref{defiR}).
 To that end we employ  \eqref{boundU}.
 
 \smallskip
\noindent \textbf{Self-map property}
 We need to estimate the right-hand side in \eqref{boundU}. For $j=1,2,3$ and using the definition of $X_{j}$ as well as Lemma~\ref{B2} we compute 
 \begin{align*}
  \| f^{j}\|_{C^{\frac{\alpha}{2},\alpha}} &\leq C \left \|\left(\frac{\psi(\bar{\theta}^{j})}{|\bar{u}_{x}^{j}|^{2}} - \frac{\psi(\theta^{j}_{0})}{|\sigma_{x}^{j}|^{2}} \right) \right\|_{C^{\frac{\alpha}{2},\alpha}} \|  \bar{u}_{xx}^{j}\|_{C^{\frac{\alpha}{2},\alpha}}\\
  &\leq C M  \left(\| \psi(\bar{\theta}^{j}) -\psi(\theta^{j}_{0}) \|_{C^{\frac{\alpha}{2},\alpha}} \left\| \frac{1}{|\bar{u}_{x}^{j}|^{2}}\right \|_{C^{\frac{\alpha}{2},\alpha}} + \|\psi(\theta^{j}_{0}) \|_{C^{\frac{\alpha}{2},\alpha}}
 \left \|  \frac{1}{|\bar{u}_{x}^{j}|^{2}} - \frac{1}{|\sigma_{x}^{j}|^{2}}  \right \|_{C^{\frac{\alpha}{2},\alpha}} \right).
 \end{align*}
Writing out the expressions of type $\psi(\theta)$ in terms of tangents and normals (recall \eqref{defpsi}, \eqref{a1}), manipulating them appropriately into products of differences (similarly to what we have done above) and application of Remark~\ref{B1} and of Lemmas \ref{B2}, \ref{B3}, \ref{lem:hilfsatz}, \ref{hilfsatz2}, and \ref{hilfsatz3} yields
 \begin{align*}
 \| f^{j}\|_{C^{\frac{\alpha}{2},\alpha}} &\leq C_{1} T^{\frac{\alpha}{2}}
 \end{align*}
  where $C_{1}=C_{1}(\delta, \|\sigma^{j}\|_{C^{2,\alpha}([0,1])}, M, \|\varphi^{\circ}\|_{C^{4}})$.
 Next we write
 \begin{align*}
 \bar{b}&= \sum_{i=1}^{3} \left( [\varphi^{\circ}(\nu^{i}_{0})- \varphi^{\circ}(\bar{\nu}^{i}) ]\frac{1}{|\sigma^{i}_{x}|} +  \varphi^{\circ}(\bar{\nu}^{i}) [\frac{1}{|\sigma^{i}_{x}|} - \frac{1}{|\bar{u}^{i}_{x}|} ] \right)(\bar{u}_{x}^{i})^{\perp} \\
&  + \left( [D\varphi^{\circ}(\nu^{i}_{0}) \cdot \frac{\sigma^{i}_{x}}{|\sigma_{x}^{i}|} -D\varphi^{\circ}(\bar{\nu}^{i}) \cdot \bar{\tau}_{i}]\frac{1}{|\sigma_{x}^{i}|}  +
(D\varphi^{\circ}(\bar{\nu}^{i}) \cdot \bar{\tau}_{i}) [\frac{1}{|\sigma_{x}^{i}|}- \frac{1}{|\bar{u}_{x}^{i}|}]\right)\bar{u}_{x}^{i}
 \end{align*} 
Similar considerations yield now
\begin{align*}
\|\bar{b} \|_{C^{0, \frac{1+\alpha}{2}}([0,T])} \leq C_{2}T^{\frac{\alpha}{2}}
\end{align*}
 with $C_{1}=C_{1}(\delta, \|\sigma^{j}\|_{C^{2,\alpha}([0,1])}, M, \|\varphi^{\circ}\|_{C^{3}})$. Putting all estimates together we derive from~\eqref{boundU}
\begin{align*}
 \|  u^{i} \|_{C^{\frac{2+\alpha}{2}, 2+\alpha}([0,T] \times [0,1])} 
& \leq 3 C_{0} (C_{1}+C_{2})T^{\frac{\alpha}{2}} + C_{0}  
\sum_{i=1}^{3} ( \| \sigma^{i}\|_{C^{2,\alpha}([0,1])} +|P^{i}| ).
 \end{align*}
 Hence choosing
 \begin{align}\label{choiceM}
 M:=2C_{0} 
\sum_{i=1}^{3} ( \| \sigma^{i}\|_{C^{2,\alpha}([0,1])} +|P^{i}| )
 \end{align}
and taking $T<1$ so that $3 C_{0} (C_{1}+C_{2})T^{\frac{\alpha}{2}} \leq M/2$ we infer  that $\mathcal{R}$ maps $X_{1} \times X_{2} \times X_{3}$ into itself. This will be assumed henceforth.

\smallskip
\noindent \textbf{Contraction property}
Let $u=(u^{1}, u^{2}, u^{3})=\mathcal{R}(\bar{u})$ and $v=(v^{1}, v^{2}, v^{3})=\mathcal{R}(\bar{v}) \in \prod_{j=1}^{3} X_{j}$ be two solutions of the linear problem \textbf{(LP)}. Set $w=(w^{1},w^{2},w^{3})$ with $w^{j}=u^{j}-v^{j}$, $j=1,2,3$. Then the $w^{j}$'s satisfy
\begin{align}
w_{ t}^{j} -D_{j}w_{xx}^{j} &= f^{j}(\bar{u})-f^{j}(\bar{v})\\
w^{j}(0, x)=0
\end{align}
 where $D_{j}= \frac{\psi(\theta^{j}_{0})}{|\sigma_{x}^{j}|^{2}}>0$ and 
\begin{align*}
  f^{j}(\bar{u})-f^{j}(\bar{v}):= \left(\frac{\psi(\theta(\bar{u})^{j})}{|\bar{u}_{x}^{j}|^{2}} - \frac{\psi(\theta^{j}_{0})}{|\sigma_{x}^{j}|^{2}} \right) \bar{u}_{xx}^{j} -\left(\frac{\psi(\theta(\bar{v})^{j})}{|\bar{v}_{x}^{j}|^{2}} - \frac{\psi(\theta^{j}_{0})}{|\sigma_{x}^{j}|^{2}} \right) \bar{v}_{xx}^{j}
\end{align*}
for $j=1,2,3$, with  boundary conditions (recall that $\nu^{i}_{0}= \frac{(\sigma^{i}_{x})^{\perp}}{|\sigma^{i}_{x}|}$ and note that  here $\psi(\theta(u))$ is given by \eqref{a1} and \eqref{defpsi} with tangent and normal vector of the curve $u$)
\begin{align*}
&w^{i}(t,1)=0  \qquad \forall t \in [0,T], \quad i=1,2,3 \\
&w^{1}(t,0)= w^{2}(t,0) =w^{3}(t,0)   \qquad \forall t \in [0,T]\\
&\sum_{i=1}^{3} \left( \varphi^{\circ}(\nu^{i}_{0}) \frac{(w_{x}^{i})^{\perp}}{|\sigma^{i}_{x}|} + (D\varphi^{\circ}(\nu^{i}_{0}) \cdot \frac{\sigma^{i}_{x}}{|\sigma_{x}^{i}|}) \frac{w^{i}_{x}}{|\sigma_{x}^{i}|} \right) =
\sum_{i=1}^{3} \left( \varphi^{\circ}(\nu^{i}_{0}) \frac{1}{|\sigma^{i}_{x}|} -  \varphi^{\circ}(\nu(\bar{u})^{i}) \frac{1}{|\bar{u}^{i}_{x}|}  \right)(\bar{u}_{x}^{i})^{\perp} \\
& \qquad \qquad + \left( (D\varphi^{\circ}(\nu^{i}_{0}) \cdot \frac{\sigma^{i}_{x}}{|\sigma_{x}^{i}|})\frac{1}{|\sigma_{x}^{i}|}  -
(D\varphi^{\circ}(\nu(\bar{u})^{i}) \cdot \tau(\bar{u})^{i}) \frac{1}{|\bar{u}_{x}^{i}|}\right)\bar{u}_{x}^{i} \\
&\qquad\qquad -\sum_{i=1}^{3} \left( \varphi^{\circ}(\nu^{i}_{0}) \frac{1}{|\sigma^{i}_{x}|} -  \varphi^{\circ}(\nu(\bar{v})^{i}) \frac{1}{|\bar{v}^{i}_{x}|}  \right)(\bar{v}_{x}^{i})^{\perp} \\
& \qquad \qquad - \left( (D\varphi^{\circ}(\nu^{i}_{0}) \cdot \frac{\sigma^{i}_{x}}{|\sigma_{x}^{i}|})\frac{1}{|\sigma_{x}^{i}|}  -
(D\varphi^{\circ}(\nu(\bar{v})^{i}) \cdot \tau(\bar{v})^{i}) \frac{1}{|\bar{v}_{x}^{i}|}\right)\bar{v}_{x}^{i}  =:b(\bar{u})-b(\bar{v})
\qquad \forall t \in [0,T].
\end{align*}
This is again a linear parabolic system and it satisfies the complementary and compatibility conditions. In particular it satisfies the Schauder-type estimate 
\begin{align}\label{boundW}
&\sum_{i=1}^{3} \|  w^{i} \|_{C^{\frac{2+\alpha}{2}, 2+\alpha}([0,T] \times [0,1])} \\
& \leq C_{0} \left( 
\sum_{i=1}^{3} ( \| f^{i}(\bar{u})-f^{i}(\bar{v})\|_{C^{\frac{\alpha}{2},\alpha}([0,T] \times [0,1])} )
+ \| b(\bar{u})-b(\bar{v}) \|_{C^{0, \frac{1+\alpha}{2}}([0,T])}
\right). \notag
\end{align}
Using the lemmas from the Appendix~\ref{AppA}, the definition of $X_{j}$, and arguments similar to those employed in the verification of the self-map property we  compute for $j=1,2,3$
\begin{align*}
\| f^{j}(\bar{u})-f^{j}(\bar{v})\|_{C^{\frac{\alpha}{2},\alpha}}& \leq 
\left\| \frac{\psi(\theta(\bar{u})^{j})}{|\bar{u}_{x}^{j}|^{2}} - \frac{\psi(\theta(\bar{v})^{j})}{|\bar{v}_{x}^{j}|^{2}}  \right\|_{C^{\frac{\alpha}{2},\alpha}} \|\bar{u}_{xx}^{j}\|_{C^{\frac{\alpha}{2},\alpha}} \\
& \quad + \left \| \frac{\psi(\theta(\bar{v})^{j})}{|\bar{v}_{x}^{j}|^{2}} -\frac{\psi(\theta^{j}_{0})}{|\sigma_{x}^{j}|^{2}}  \right\|_{C^{\frac{\alpha}{2},\alpha}} \|\bar{u}_{xx}^{j} - \bar{v}_{xx}^{j}\|_{C^{\frac{\alpha}{2},\alpha}}\\
& \leq C T^{\frac{\alpha}{2}}\| \bar{u}^{j} -\bar{v}^{j} \|_{C^{\frac{2+\alpha}{2},2+\alpha}}
\end{align*}
and
\begin{align*}
\| b(\bar{u})&-b(\bar{v}) \|_{C^{0, \frac{1+\alpha}{2}}([0,T])} \\ & \leq
\sum_{i=1}^{3} \Big(
\left \|  \varphi^{\circ}(\nu^{i}_{0}) \frac{1}{|\sigma^{i}_{x}|} -  \varphi^{\circ}(\nu(\bar{u})^{i}) \frac{1}{|\bar{u}^{i}_{x}|}    \right \|_{C^{0, \frac{1+\alpha}{2}}([0,T])} \|(\bar{u}_{x}^{i})^{\perp}  - (\bar{v}_{x}^{i})^{\perp} \|_{C^{0, \frac{1+\alpha}{2}}([0,T])} \\
& +\left\| \varphi^{\circ}(\nu(\bar{u})^{i}) \frac{1}{|\bar{u}^{i}_{x}|}  -  \varphi^{\circ}(\nu(\bar{v})^{i}) \frac{1}{|\bar{v}^{i}_{x}|}  \right  \|_{C^{0, \frac{1+\alpha}{2}}([0,T])} \|(\bar{v}_{x}^{i})^{\perp} \|_{C^{0, \frac{1+\alpha}{2}}([0,T])}  \\
&  + \left\| (D\varphi^{\circ}(\nu^{i}_{0}) \cdot \frac{\sigma^{i}_{x}}{|\sigma_{x}^{i}|})\frac{1}{|\sigma_{x}^{i}|}  -
(D\varphi^{\circ}(\nu(\bar{u})^{i}) \cdot \tau(\bar{u})^{i}) \frac{1}{|\bar{u}_{x}^{i}|}\right \|_{C^{0, \frac{1+\alpha}{2}}([0,T])}  \|\bar{u}_{x}^{i} - \bar{v}_{x}^{i}  \|_{C^{0, \frac{1+\alpha}{2}}([0,T])}  \\
&  + \left \|   (D\varphi^{\circ}(\nu(\bar{u})^{i}) \cdot \tau(\bar{u})^{i}) \frac{1}{|\bar{u}_{x}^{i}|}-
(D\varphi^{\circ}(\nu(\bar{v})^{i}) \cdot \tau(\bar{v})^{i}) \frac{1}{|\bar{v}_{x}^{i}|}
\right \|_{C^{0, \frac{1+\alpha}{2}}([0,T])}   
\| \bar{v}_{x}^{i} \|_{C^{0, \frac{1+\alpha}{2}}([0,T])}  \Big )\\
&\leq C T^{\frac{\alpha}{2}} \sum_{i=1}^{3}\| \bar{u}^{j} -\bar{v}^{j} \|_{C^{\frac{2+\alpha}{2},2+\alpha}}
\end{align*}
where $C=C(M, \delta,\|\sigma\|_{C^{2,\alpha}([0,1])}, \| \varphi^{\circ}\|_{C^{4}})$. 
Thus, by choosing $T$ possibly even smaller,  we obtain
$$\sum_{i=1}^{3} \|  w^{i} \|_{C^{\frac{2+\alpha}{2}, 2+\alpha}([0,T] \times [0,1])} \leq \frac{1}{2} \sum_{i=1}^{3} \|  \bar{w}^{i} \|_{C^{\frac{2+\alpha}{2}, 2+\alpha}([0,T] \times [0,1])} $$
and the contraction property of $\mathcal{R}$ is established.

\smallskip
Finally application of the Banach's fixed point theorem yields  the existence of a unique map $u \in\prod_{j=1}^{3} X_{j}$ with $u =\mathcal{R}(u)$, that is a solution to \eqref{OP1}, \eqref{OP2}.  In particular we can state the following theorem.

\begin{theo}\label{teo:STE}
Let $P^{i} \in \R^{2}$, $i=1,2,3$, be given points and $\alpha \in (0,1)$. Let $\sigma^{i} \in C^{2,\alpha}([0,1], \R^{2})$, $i=1,2,3$ be regular maps fulfilling the compatibility conditions \eqref{cc1}, \eqref{cc2}, \eqref{cc3}. Then there exists $T>0$ and unique regular maps $u^{i} \in C^{\frac{2+\alpha}{2}, 2+\alpha}([0,T] \times [0,1], \R^{2})$, $i=1,2,3$ such that \eqref{OP1}, \eqref{OP2} are satisfied together with the initial conditions $u^{i}(0, x)=\sigma^{i}(x)$, $x \in [0,1]$, $i=1,2,3$.
\end{theo}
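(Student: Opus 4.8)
The plan is to carry out the three-step scheme of Bronsard--Reitich \cite{BR}, adapted to the present nonlinear anisotropic setting: first freeze the coefficients of \eqref{OP1} at the initial datum and linearize the Herring condition \eqref{HC} at the initial normals, obtaining a decoupled linear parabolic system with coupled boundary conditions; second solve this linear problem via Solonnikov's theory \cite{Sol}, which requires verifying parabolicity, the complementary (Lopatinskii--Shapiro) conditions at $x=0$, $x=1$ and $t=0$, and the zeroth-order compatibility conditions, and yields the Schauder estimate \eqref{boundU}; third realize the solution operator $\mathcal{R}$ of \eqref{defiR} on the product $\prod_j X_j$ of closed balls in $C^{(2+\alpha)/2,2+\alpha}$ from \eqref{defXi} and show that, for $T$ sufficiently small, it is a contraction of $\prod_j X_j$ into itself, so that Banach's fixed point theorem produces the unique fixed point, which is the sought solution of \eqref{OP1}--\eqref{OP2}.

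Concretely, I would first fix $T=T(M,\delta,\sigma)$ so small that every $v\in X_i$ still satisfies the immersion bound \eqref{eq:immersion}; this secures regularity of the evolving curves and follows from a short interpolation argument in the spirit of \cite{DLP-STE} together with Lemma~\ref{B5}. Given $\bar u\in\prod_j X_j$, I would write \eqref{OP1} as $u^j_t-D_j u^j_{xx}=f^j$ with $D_j=\psi(\theta_0^j)/|\sigma^j_x|^2>0$ and $f^j$ the nonlinear remainder, and collect the nonlinear part of the linearized \eqref{HC} into the datum $\bar b$. The crux of the linear theory is the complementary condition at the triple junction: since $\varphi^\circ$ is an arbitrary smooth elliptic norm, \eqref{HC} cannot be reduced to a pair of scalar angle conditions as in \cite[eq.(28)]{BR}, so one must compute directly the $6\times6$ Lopatinskii determinant built from the boundary symbol $\mathcal{B}(0,t,\mathrm{i}\tau,p)$ and the stable roots $\tau_i^+=\mathrm{i}\sqrt{p/D_i}$ of the principal symbol. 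This determinant equals $p\,(\sum_i \varphi^\circ(\nu_0^i)/\sqrt{\psi(\theta_0^i)})^2+p\,(\sum_i (D\varphi^\circ(\nu_0^i)\cdot\sigma^i_x/|\sigma^i_x|)/\sqrt{\psi(\theta_0^i)})^2$, which is nonzero whenever $p\neq0$ because the first sum is strictly positive; the condition at $x=1$ is immediate since there $\mathcal{B}$ is the identity, and the $t=0$ complementary condition and the zeroth-order compatibility conditions follow at once from \eqref{cc1}--\eqref{cc3} and the definition of $X_j$. Then \cite[Thm.~4.9]{Sol} delivers a unique solution of the linear problem together with \eqref{boundU}.

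For the fixed point argument I would use \eqref{boundU} to estimate its right-hand side. Writing $f^j$ and $\bar b$ as sums in which each summand is a product of a difference (such as $\psi(\bar\theta^j)-\psi(\theta_0^j)$, $\varphi^\circ(\bar\nu^i)-\varphi^\circ(\nu_0^i)$, or $1/|\bar u^i_x|-1/|\sigma^i_x|$, each vanishing at $t=0$) with a factor bounded in terms of $M$ and $\delta$, and invoking the product and smallness-in-$T$ lemmas of Appendix~\ref{AppA} (in particular Lemmas~\ref{B2}, \ref{B3}, \ref{lem:hilfsatz}, \ref{hilfsatz2}, \ref{hilfsatz3}), I get $\|f^j\|_{C^{\alpha/2,\alpha}}+\|\bar b\|_{C^{0,(1+\alpha)/2}([0,T])}\le C T^{\alpha/2}$ with $C$ depending on $\delta$, $\|\sigma\|_{C^{2,\alpha}}$, $M$ and $\|\varphi^\circ\|_{C^4}$. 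Choosing $M$ as in \eqref{choiceM} and then $T$ small makes $\mathcal{R}$ a self-map of $\prod_j X_j$. Repeating the same bookkeeping for the differences $f^j(\bar u)-f^j(\bar v)$ and $b(\bar u)-b(\bar v)$, which the Appendix lemmas bound by $C T^{\alpha/2}\|\bar u-\bar v\|_{C^{(2+\alpha)/2,2+\alpha}}$, yields the contraction property after shrinking $T$ once more, and Banach's fixed point theorem finishes the proof.

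I expect the main obstacle to be the verification of the complementary condition at the triple junction: the full Herring condition \eqref{HC} couples all three curves and all their first derivatives, so the boundary operator is a genuinely coupled $6\times6$ system whose Lopatinskii determinant must be shown to be nonvanishing, and correctly tracking the normal/tangential decompositions of $D\varphi^\circ(\nu_0^i)$ through that computation is delicate. A secondary, more mechanical difficulty is the Hölder bookkeeping for $f^j$ and $\bar b$, which must be arranged so that every factor that decays like $T^{\alpha/2}$ is precisely one that vanishes at $t=0$.
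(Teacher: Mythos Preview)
Your proposal is correct and follows essentially the same approach as the paper: the linearization around the initial datum, the verification of Solonnikov's hypotheses (including the explicit $6\times6$ Lopatinskii determinant at the triple junction, whose nonvanishing you identify for the same reason as the paper), and the self-map/contraction estimates via the Appendix lemmas with the choice \eqref{choiceM} all match the paper's argument in structure and detail.
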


\begin{cor}\label{cor:STE}
Let $u^{i} \in C^{\frac{2+\alpha}{2}, 2+\alpha}([0,T] \times [0,1], \R^{2})$, $i=1,2,3$ be the solutions found in Theorem~\ref{teo:STE}.
Then $u^{i} \in C^{\infty}((0,T] \times [0,1], \R^{2})$.
\end{cor}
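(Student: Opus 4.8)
\section*{Proof proposal for Corollary~\ref{cor:STE}}

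The plan is a standard parabolic bootstrap starting from the equation \eqref{OP1}, carried out on time intervals $[\varepsilon,T]$ with $\varepsilon>0$ fixed (so that the merely $C^{2,\alpha}$ regularity of the initial datum plays no role), and made rigorous by differentiating the system through difference quotients in time together with a temporal cutoff. Fix $\varepsilon\in(0,T)$ and a smooth function $\eta=\eta(t)$ with $\eta\equiv 0$ on $[0,\varepsilon/2]$ and $\eta\equiv 1$ on $[\varepsilon,T]$. For small $h>0$ set $w^{i,h}(t,x):=\frac1h\big(u^i(t,x)-u^i(t-h,x)\big)$ on $[h,T]\times[0,1]$. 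Subtracting \eqref{OP1} at times $t$ and $t-h$ and writing the increments of $\psi(\theta^i)$ and of $|u^i_x|^{-2}$ as mean--value products (here $\psi$ and $\theta\mapsto\theta(u_x)$ are smooth and $u^i$ is regular by \eqref{eq:immersion}), one sees that $w^h=(w^{1,h},w^{2,h},w^{3,h})$ solves a linear parabolic system of the form
\begin{align*}
w^{i,h}_t-a^{i,h}(t,x)\,w^{i,h}_{xx}=b^{i,h}(t,x)\cdot w^{i,h}_x ,\qquad i=1,2,3,
\end{align*}
whose coefficients are built from $u^i,u^i_x,u^i_{xx}$ at times $t$ and $t-h$; since $u^i\in C^{\frac{2+\alpha}{2},2+\alpha}$ they are bounded in $C^{\frac{\alpha}{2},\alpha}([h,T]\times[0,1])$ uniformly in $h$, and $a^{i,h}\geq c>0$ by \eqref{boundmpiccolo} and \eqref{eq:immersion}. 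Differencing \eqref{OP2} gives, at $x=1$, the homogeneous Dirichlet condition $w^{i,h}(t,1)=0$, and at $x=0$ the conditions $w^{1,h}=w^{2,h}=w^{3,h}$ and $\sum_{i}Q^{i,h}(t)\,w^{i,h}_x(t,0)=0$, where the $2\times2$ matrices $Q^{i,h}$ depend on $u^i_x(t,0),u^i_x(t-h,0)$ and are bounded in $C^{0,\frac{1+\alpha}{2}}([h,T])$ uniformly in $h$. As $h\to0$ these boundary operators converge to the linearizations of the operators in \eqref{OP2} around $u$ itself, for which the parabolicity and the complementary condition hold by exactly the determinant computation carried out in the proof of Theorem~\ref{teo:STE} (nonzero for every regular network configuration, since $\varphi^{\circ}>0$ and $\psi>0$, and in particular along $u$); hence they hold for all sufficiently small $h$ as well.

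Now multiply by the cutoff: $\eta\,w^{i,h}$ solves the same system with the extra forcing $\eta'(t)\,w^{i,h}\in C^{\frac{\alpha}{2},\alpha}$ (bounded uniformly in $h$) and with zero value at $t=0$, so the compatibility conditions of order zero of \cite{Sol} are trivially met. The Schauder estimate \cite[Thm.~4.9]{Sol}, exactly as in the solution of the Linear Problem, then yields $\sum_i\|\eta\,w^{i,h}\|_{C^{\frac{2+\alpha}{2},2+\alpha}}\leq C$ with $C$ independent of $h$, and letting $h\to0$ gives $u^i_t\in C^{\frac{2+\alpha}{2},2+\alpha}([\varepsilon,T]\times[0,1])$. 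Using \eqref{OP1} in the form $u^i_{xx}=\frac{|u^i_x|^2}{\psi(\theta^i)}\,u^i_t$, the increased regularity of $u^i_t$ transfers to $u^i_{xx}$, hence to $u^i$. Iterating this scheme — at step $k$ one differences the system $k$ times in time, obtains a linear parabolic system whose coefficients and boundary data have the regularity supplied by step $k-1$ and whose complementary and compatibility conditions again reduce to the computation in Theorem~\ref{teo:STE}, applies the Schauder estimate, and trades time derivatives for space derivatives via the equation — produces $u^i\in C^\infty([\varepsilon,T]\times[0,1])$ for every $\varepsilon>0$, i.e. $u^i\in C^\infty((0,T]\times[0,1])$.

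The main obstacle is bookkeeping rather than conceptual: one must track the regularity indices through the iteration, using the parabolic H\"older space lemmas of Appendix~\ref{AppA} to control the nonlinear dependence of the coefficients on $u$, and re-verify at each step that the differentiated boundary conditions at the triple junction still satisfy the complementary condition. The latter, however, is not a new difficulty, since in the $h\to0$ limit the differenced boundary operators agree at leading order with the linearizations already analyzed in the proof of Theorem~\ref{teo:STE}. The temporal cutoff is precisely what allows one to dispense with the low regularity of the initial datum, at the price of not reaching $t=0$.
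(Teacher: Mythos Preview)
Your proposal is correct and follows essentially the same approach as the paper, which merely states that ``the instant parabolic smoothing can be shown by some standard arguments employing a cut-off function and a boot-strap argument in the same fashion as in \cite[Thm~2.3]{DLP-STE}.'' You have spelled out precisely this standard scheme: temporal cutoff to kill the low-regularity initial datum, time difference quotients to obtain a linear system for $u_t$ with $C^{\frac{\alpha}{2},\alpha}$ coefficients, re-verification of the complementary condition via the same determinant computed in Theorem~\ref{teo:STE} (which is nonzero at every regular configuration since $\varphi^\circ>0$ and $\psi>0$), application of the Solonnikov Schauder estimate, and iteration trading time regularity for space regularity through the equation $u^i_{xx}=\frac{|u^i_x|^2}{\psi(\theta^i)}u^i_t$.
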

\begin{proof}
The instant parabolic smoothing can be shown by some standard arguments employing a cut-off function and a boot-strap argument in the same fashion as in \cite[Thm~2.3]{DLP-STE}.
\end{proof}

\section{Maximal solution for the geometric problem}\label{sec:maxsolGEOP}

We now prove existence, uniqueness and regularity of a maximal geometric solution.
We first show that a geometric solution is also a solution to the special flow up to a diffeomorphism.

\begin{lemma}\label{lemma:diffeo}
Let $(u^{1}, u^{2}, u^{3})$, with $u^{i} \in C^{\frac{2+\alpha}{2}, 2+\alpha} ([0,T] \times [0,1], \R^{2}) $, $i=1,2,3$,  be a solution of the geometric problem (according to  Definition~\ref{geomsol}) with tangential components $\lambda^{i}= u_{t}^{i} \cdot \tau^{i}$. Then there exists a orientation preserving diffeomorphism $\phi^{i} \in C^{\frac{2+\alpha}{2}, 2+\alpha} ([0,T'] \times [0,1], [0,1]) $, $i=1,2,3$,  for some $0<T'\leq T$, such that $(\tilde{u}^{1}, \tilde{u}^{2}, \tilde{u}^{3})$, with $\tilde{u}^{i}(t,y):= u^{i}(t, \phi^{i}(t,y)) \in C^{\frac{2+\alpha}{2}, 2+\alpha} ([0,T'] \times [0,1], \R^{2}) $ is the solution of the Special Flow (recall Definition~\ref{def:SF} and Section~\ref{sec:2}).
\end{lemma}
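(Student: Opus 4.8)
The plan is to construct the reparametrizations $\phi^i$ by solving, for each $i$, the ODE in $y$ that forces the tangential component of the reparametrized flow to equal the special-flow tangential term. Recall that if $\tilde u^i(t,y)=u^i(t,\phi^i(t,y))$ then $\tilde u^i_t = u^i_t + u^i_x\,\phi^i_t = \psi(\theta^i)\kappa^i\nu^i + \lambda^i\tau^i + |u^i_x|\,\tau^i\,\phi^i_t$, so the normal velocity is automatically preserved (it is geometric), and we only need to choose $\phi^i$ so that the new tangential coefficient $\lambda^i + |u^i_x|\phi^i_t$ coincides with the special-flow value $-\psi(\tilde\theta^i)\,\partial_y\!\big(1/|\tilde u^i_y|\big)$ from \eqref{l_{t}}. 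Writing this out, $\phi^i$ must satisfy a quasilinear PDE of the form $\phi^i_t = F^i\big(t,y,\phi^i,\phi^i_y,\phi^i_{yy}\big)$, which after differentiating becomes parabolic in the reparametrization. First I would set up this system precisely, together with the initial condition $\phi^i(0,y)=y$ and suitable boundary conditions at $y=0$ (compatible with the triple junction moving consistently) and $y=1$ (where $\lambda^i=0$, so $\phi^i(t,1)=1$).

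Next I would solve this reparametrization system on a possibly shorter interval $[0,T']$ using the same Solonnikov-type Schauder theory and contraction-mapping scheme used in Section~\ref{sec:2}: linearize around $\phi^i(t,y)=y$, check the parabolicity and complementary/compatibility conditions, obtain a unique solution in $C^{\frac{2+\alpha}{2},2+\alpha}$, and then use a fixed-point argument for the nonlinear problem; short time suffices because for $t$ small $\phi^i$ stays close to the identity, so $\phi^i_y$ stays bounded away from $0$ and $\phi^i$ remains an orientation-preserving diffeomorphism of $[0,1]$. The regularity of $\tilde u^i$ follows from that of $u^i$ and $\phi^i$ by the chain rule, and one checks directly that $\tilde u^i$ satisfies \eqref{SpecialFlow} with the correct boundary data: the endpoint conditions $\tilde u^i(t,1)=P^i$ and $\tilde u^1(t,0)=\tilde u^2(t,0)=\tilde u^3(t,0)$ are preserved because $\phi^i$ fixes $0$ and $1$, and the anisotropic Herring condition $\sum_i D\varphi^\circ(\tilde\nu^i)=0$ is a geometric condition depending only on the image curves, hence unchanged under reparametrization.

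An alternative, slightly softer route is to invoke directly the short-time existence and uniqueness for the Special Flow from Theorem~\ref{teo:STE}: since the initial datum $(u^i(0,\cdot))=(\sigma^i)$ satisfies the compatibility conditions \eqref{cc1}--\eqref{cc3} (these are exactly the admissibility conditions of Definition~\ref{admtriod}, which hold because $(u^i)$ is a geometric solution), there is a unique Special Flow solution $(\hat u^i)$ on some $[0,T']$. One then argues that both $(u^i)$ and $(\hat u^i)$ trace out the same evolving network $\mathbb S(t)$ (they have the same initial network and the same normal velocity, and the normal velocity determines the geometric evolution), so for each $t$ and $i$ there is a diffeomorphism $\phi^i(t,\cdot)$ of $[0,1]$ with $\hat u^i(t,\cdot)=u^i(t,\phi^i(t,\cdot))$; the regularity of $\phi^i$ jointly in $(t,y)$ then follows from $\phi^i=(u^i(t,\cdot))^{-1}\circ\hat u^i(t,\cdot)$ together with the regularity of $u^i,\hat u^i$ and the non-degeneracy $|u^i_x|\geq\delta/2$.

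The main obstacle is making the clause "the normal velocity determines the geometric evolution of the network" rigorous in the network setting: unlike for a single closed curve, here one must track the triple junction, verify that the reparametrization ODEs at $y=0$ are mutually consistent (i.e. that the three $\phi^i(t,0)$ can be chosen so that the junction points still coincide while the tangential speeds match the special-flow prescription), and ensure the resulting boundary-value problem for $(\phi^1,\phi^2,\phi^3)$ is well posed. Concretely, in the first approach this is the step where one checks the complementary condition for the linearized $\phi$-system at $x=0$; in the second approach it is the step identifying the two flows geometrically, where one needs a uniqueness statement for the geometric evolution that itself essentially reduces to the well-posedness of the special flow. I expect the cleanest writeup to follow the first approach, solving the $\phi^i$-system directly by the methods of Section~\ref{sec:2}.
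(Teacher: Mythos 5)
Your first route is close in spirit to the paper's proof: the paper likewise derives the quasilinear PDE that $\phi^i$ must satisfy from matching the tangential component of $\tilde u^i_t$ with the special-flow term, with Dirichlet conditions $\phi^i(t,0)=0$, $\phi^i(t,1)=1$. But there is a genuine gap in your treatment of the initial condition. You take $\phi^i(0,\cdot)=\mathrm{id}$, so the reparametrized initial datum is $u^i(0,\cdot)$ itself; however a geometric solution only satisfies the geometric admissibility conditions of Definition~\ref{admtriod} (vanishing anisotropic curvature at $x=1$, agreement of the velocity \emph{vectors} at the junction), not the parametric compatibility conditions \eqref{cc2}, \eqref{cc3}, which constrain the full vector $\sigma^i_{xx}/|\sigma^i_x|^2$ including its tangential part. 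Consequently, with the identity as initial datum, the corner compatibility for your $\phi$-problem fails in general: since $\phi^i(t,0)=0$ and $\phi^i(t,1)=1$ force $\phi^i_t=0$ at $y=0,1$, the right-hand side of the $\phi$-equation must vanish there at $t=0$, which with $\phi^i_{0,yy}=0$ would require $\lambda^i=\psi(\theta^i)\,u^i_{xx}\cdot\tau^i/|u^i_x|^2$ at the endpoints at time zero — not true for a generic geometric parametrization. So the Schauder solvability in $C^{\frac{2+\alpha}{2},2+\alpha}$ breaks down, and even formally $\tilde u^i$ would not be \emph{the} special-flow solution of Section~\ref{sec:2}, whose construction requires \eqref{cc1}--\eqref{cc3}. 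The paper's proof starts precisely by building a nontrivial initial diffeomorphism $\phi_0$ (a perturbation of the identity near the endpoints with $\phi_{0,y}=1$ there) satisfying \eqref{condphizero}, which simultaneously yields the corner compatibility and makes the reparametrized initial data satisfy \eqref{cc2}, \eqref{cc3}; this step is missing from your proposal. The same defect undermines your ``softer'' alternative: the claim that \eqref{cc1}--\eqref{cc3} are exactly the admissibility conditions of Definition~\ref{admtriod} is false, and the identification of the two evolutions up to reparametrization is essentially the geometric uniqueness that this lemma is used to establish (Theorem~\ref{teo:GEO}), so invoking it here would be circular.

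Two further remarks. The obstacle you single out, consistency of the three reparametrizations at the triple junction, is in fact not an issue: imposing $\phi^i(t,0)=0$, $\phi^i(t,1)=1$ decouples the three scalar problems, the junction points automatically stay together because the $u^i$ already coincide at $x=0$, and the matching of tangential speeds there is automatic since at the junction the tangential component is forced by \eqref{expressionlambda} in terms of the parametrization-invariant quantities $\psi(\theta^j)\kappa^j$. Finally, instead of linearizing the quasilinear $\phi$-equation and running a fixed-point scheme, the paper passes to the inverse diffeomorphism $\eta(t,x)$ with $\phi(t,\eta(t,x))=x$, which solves a \emph{linear} parabolic equation whose coefficients are built from the known $u^i$ and $\lambda^i$; existence then follows from linear theory alone, and one sets $\phi=\eta^{-1}$ on a possibly smaller time interval. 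Your scheme could likely be repaired along these lines, but as written the initial-compatibility step is a real gap rather than a technicality.
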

\begin{proof} 
Since the proof of existence of $\phi^{i}$ is performed identically for every map $i=1,2,3$, 
 let us omit the index $i$ for simplicity of notation. 
Note that by the assumptions on the initial data (recall Definition~\ref{admtriod}) we have that the  anisotropic curvature (and hence the curvature and  curvature vector) vanishes at $x=1$ at time zero,  that is
\begin{align}\label{primacond}
(\kappa\nu)|_{(t=0,x=1)}=0.
\end{align}
Moreover we have that at the junction point at time zero there holds
\begin{align}\label{secondacond}
(\psi(\theta^{i}) \kappa^{i} \nu^{i} + \lambda^{i} \tau^{i})|_{(t=0,x=0)}= (\psi(\theta^{j}) \kappa^{j} \nu^{j} + \lambda^{j} \tau^{j})|_{(t=0,x=0)} \quad \text{ for }  i,j \in \{1,2,3\}.
\end{align}
First of all construct a diffeomorphism $\phi_{0}:[0,1] \to [0,1]$ such that $\phi_{0}(0)=0$, $\phi_{0}(1)=1$, $\phi_{0,y}>0$ in $[0,1]$ and 
\begin{align}\label{condphizero}
0=\frac{\psi(\theta (0,y))}{|u_{x}(0, y)|^{2}}\frac{\phi_{0,yy}(y)}{(\phi_{0,y}(y))^{2}}
+ \psi(\theta(0,y)) \frac{u_{xx}(0,y) \cdot u_{x} (0, y)}{|u_{x}(0, y)|^{4}} -\frac{\lambda(0, y)}{|u_{x}(0, y)|}
\end{align}
at $y=0,1$ (whereby recall that $\lambda(0,1)=0$). This can be done for instance by imposing also that $\phi_{0,y}(y)=1$ at $y=0,1$, and by  taking a suitable perturbation (near the boundary points)  of the identity map.
Next, note that at a boundary point $y =0,1$ we have 
\begin{align*}
\frac{\tilde{u}_{yy}}{|\tilde{u}_{y}|^{2}}(t,y)&= \frac{u_{xx} (t,y) (\phi_{y}(t,y))^{2} + u_{x}(t,y) \phi_{yy}(t,y)}{|u_{x}(t,y)|^{2} (\phi_{y}(t,y))^{2}} 
=\frac{u_{xx}(t,y)}{|u_{x}(t,y)|^{2}}+ \frac{\phi_{yy}(t,y)}{|u_{x}(t,y)| (\phi_{y}(t,y))^{2}}\tau(t,y)\\
&=(\kappa \nu)(t,y) + \left( \frac{u_{xx} (t,y)}{|u_{x}(t,y)|^{2}} \cdot \tau(t,y)
+  \frac{\phi_{yy}(t,y)}{|u_{x}(t,y)| (\phi_{y}(t,y))^{2}}\right)\tau(t,y)
\end{align*}
therefore by \eqref{primacond}, \eqref{condphizero}, and \eqref{boundmpiccolo} we infer that $\frac{\tilde{u}_{yy}}{|\tilde{u}_{y}|^{2}}(0,1)=0$ that is \eqref{cc2} is fulfilled.
Similarly using \eqref{secondacond} and \eqref{condphizero} we infer that \eqref{cc3} is also fulfilled.
Since 
\begin{align*}
\tilde{u}_{t}(t,y)&= u_{t}(t, \phi(t,y)) + u_{x}(t, \phi(t,y) )\phi_{t}(t,y)\\
&=(\psi(\theta) \kappa \nu  + \lambda \tau)(t, \phi(t,y)) + \phi_{t}(t,y) | u_{x}(t, \phi(t,y))| \tau(t, \phi(t,y)) \\
& = (\psi(\tilde{\theta}) \tilde{\kappa} \tilde{\nu})(t,y) + \big(\lambda(t, \phi(t,y))+ \phi_{t}(t,y) | u_{x}(t, \phi(t,y))| \big) \tau(t, \phi(t,y))  
\end{align*}
we see that for $\tilde{u}$ to fulfill \eqref{SpecialFlow}, we need $\phi$ to be a solution of
\begin{align*}
\phi_{t}(t,y)&=\frac{1}{|u_{x}(t, \phi(t,y))|} \left( \psi(\tilde{\theta}) (\frac{\tilde{u}_{yy}}{|\tilde{u}_{y}|^{2}} \cdot \tilde{\tau}) (t,y) -\lambda (t, \phi(t,y))\right)\\
&= \frac{\psi(\tilde{\theta})}{|u_{x}(t, \phi(t,y))|^{2}}\frac{\phi_{yy}(t,y)}{(\phi_{y}(t,y))^{2}}
+ \psi(\tilde{\theta}) \frac{u_{xx}(t, \phi(t,y)) \cdot u_{x} (t, \phi(t,y))}{|u_{x}(t, \phi(t,y))|^{4}} -\frac{\lambda(t, \phi(t,y))}{|u_{x}(t, \phi(t,y))|}
\end{align*}
(where $\psi(\tilde{\theta})=\varphi^{\circ}(\tilde{\nu}) D^{2} \varphi^{\circ} (\tilde{\nu}) \tilde{\tau} \cdot \tilde{\tau}$ with $\tilde{\tau}(t,y)= \tau(t, \phi(t,y))$) together with
$$\phi(t,0)=0, \quad \phi(t,1)=1, \quad \phi_{y}(t,y) >0 \quad \forall \, t \text{ and }\, y\in [0,1], $$
and 
$$ \phi(0, \cdot)=\phi_{0}(\cdot). $$
Observe that by the construction of $\phi_{0}$ the compatibility conditions of order zero are fulfilled. 
Instead of solving the PDE for $\phi$, it is convenient to work with the inverse diffeomorphism $\eta=\eta(t,x)$, such that  $\phi(t, \eta(t,x))=x$, and derive its existence first (as proposed in \cite{GoeMP}).
Indeed we see that $\eta$ must solve the \emph{linear} PDE
\begin{align*}
\eta_{t}(t,x) &=-\frac{\phi_{t}(t,y)}{\phi_{y}(t,y)}=\frac{\psi(\theta)}{|u_{x}(t,x)|^{2}} \eta_{xx}(t,x) -\eta_{x}(t,x) \left(  \psi(\theta) \frac{u_{xx}(t,x) \cdot u_{x}(t,x)}{|u_{x}(t,x)|^{4}} -\frac{\lambda(t,x)}{|u_{x}(t,x)|}\right)
\end{align*}
together with
$$ \eta(t,0) =0, \quad \eta(t,1)=1, \qquad \eta_{x}(t,x)>0\quad \forall \, t \text{ and }\, x\in [0,1], $$
and 
$$ \eta(0, \cdot)=\phi_{0}^{-1}(\cdot). $$
The existence of $\eta \in C^{\frac{2+\alpha}{2}, 2+\alpha} ([0,T] \times [0,1], \R) $ follows from standard theory \cite{Sol}. Possibly making the time interval smaller we can ensure that $\eta(t, \cdot)$ is a diffeomorphism. Finally we take $\phi(t, \cdot)= \eta^{-1}(t, \cdot)$.
\end{proof}

From Lemma \ref{lemma:diffeo}, Theorem \ref{teo:STE} and Corollary \ref{cor:STE} we directly obtain the following result.

\begin{theo}\label{teo:GEO}
Let $\alpha \in (0,1)$, $P^{i} \in \R^{2}$, $i=1,2,3$, be given points and $\sigma^{i}$, $i=1,2,3$, 
as in Definition~\ref{admtriod}. Then there exists $T>0$ and regular maps
$u^{i} \in C^{\frac{2+\alpha}{2}, 2+\alpha}([0,T) \times [0,1], \R^{2})\cap C^{\infty}((0,T) \times [0,1], \R^{2})$, 
$i=1,2,3$, which solve the geometric problem with initial conditions $u^{i}(0, x)=\sigma^{i}(x)$, $x \in [0,1]$, 
in the sense of Definition~\ref{geomsol} (i.e., up to reparametrization of the given initial data).
Moreover, the solutions $u^i$ are unique up to reparametrization, that is,  they parametrize a geometrically unique evolving network.
\end{theo}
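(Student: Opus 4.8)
The plan is to obtain all three assertions from the short-time results for the Special Flow, namely Theorem~\ref{teo:STE}, Corollary~\ref{cor:STE} and the equivalence Lemma~\ref{lemma:diffeo}; the only genuinely new work is a reparametrization of the initial datum and the geometric uniqueness claim.

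For existence and regularity I would first reparametrize the admissible initial network so that the stronger compatibility conditions \eqref{cc1}--\eqref{cc3} hold. Given $\sigma^i$ as in Definition~\ref{admtriod}, one chooses orientation preserving diffeomorphisms $\phi_0^i\in C^{2,\alpha}([0,1],[0,1])$ with $\phi_0^i(0)=0$, $\phi_0^i(1)=1$, $\phi^i_{0,y}>0$, and with prescribed values of $\phi^i_{0,y}$ and $\phi^i_{0,yy}$ at $y=0,1$. Using the splitting $\frac{(\sigma^i\circ\phi_0^i)_{yy}}{|(\sigma^i\circ\phi_0^i)_y|^2}=(\kappa^i\nu^i)\circ\phi_0^i+(\,\cdots)\,\tau^i\circ\phi_0^i$ exactly as in the proof of Lemma~\ref{lemma:diffeo}, together with $\kappa_\varphi^i=0$ at $x=1$ (which forces $\kappa^i=0$ there by ellipticity) and the admissibility condition at the junction, one tunes the free boundary derivatives of $\phi_0^i$ so that $\tilde\sigma^i:=\sigma^i\circ\phi_0^i$ satisfies \eqref{cc2}, \eqref{cc3}; condition \eqref{cc1} is left untouched since endpoints and tangent directions are preserved. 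Theorem~\ref{teo:STE} then produces a Special Flow solution $u$ with $u^i(0,\cdot)=\tilde\sigma^i$, which is $C^\infty$ for positive times by Corollary~\ref{cor:STE}. Since \eqref{SpecialFlow} has the form \eqref{acsfT} with a purely tangential additional term, $u$ satisfies the normal velocity identity and the boundary conditions of Definition~\ref{geomsol}, hence it is a geometric solution with initial datum $\sigma$ up to reparametrization and of the asserted regularity.

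For uniqueness up to reparametrization, let $u$ and $v$ be geometric solutions whose initial data parametrize the same network $\mathbb S_0$ (the labelling and orientation being fixed by $u^i(1)=v^i(1)=P^i$ and by the junction sitting at $x=0$). By Lemma~\ref{lemma:diffeo} each of them, after a suitable time dependent reparametrization, becomes a Special Flow solution, say $w_1$ and $w_2$, which therefore trace the same evolving networks as $u$ and $v$ respectively; moreover $w_1^i(0,\cdot)$ and $w_2^i(0,\cdot)$ both parametrize $\mathbb S_0$ and satisfy \eqref{cc1}--\eqref{cc3}, so that $w_2^i(0,\cdot)=w_1^i(0,\psi_0^i(\cdot))$ for diffeomorphisms $\psi_0^i$ fixing $0$ and $1$. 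The key point is that \eqref{cc2}, \eqref{cc3} for both $w_1(0,\cdot)$ and $w_2(0,\cdot)$ force $\psi^i_{0,yy}=0$ at $y=0,1$: at $x=1$ this is immediate from \eqref{cc2} since $w^i_{1,x}\neq0$; at $x=0$ it holds because the difference of the two common junction vectors would have to be a scalar multiple of each $\tau^i$ simultaneously, which by \eqref{a0} (the $\tau^i$ being pairwise non-parallel) is possible only if it vanishes. For the special tangential term, condition \eqref{condphizero} reduces precisely to $\psi^i_{0,yy}=0$ at the boundary, so the construction in the proof of Lemma~\ref{lemma:diffeo} applies with $\phi_0^i=\psi_0^i$ (the only structural requirement on this diffeomorphism being \eqref{condphizero}) and yields a time dependent diffeomorphism $\psi^i(t,\cdot)$ with $\psi^i(0,\cdot)=\psi_0^i$ such that $(t,y)\mapsto w_1(t,\psi(t,y))$ solves the Special Flow with initial datum $w_2(0,\cdot)$; the uniqueness part of Theorem~\ref{teo:STE} then gives $w_2(t,y)=w_1(t,\psi(t,y))$, so $w_1$ and $w_2$, and hence $u$ and $v$, trace the same evolving network.

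The routine parabolic estimates being already absorbed into Theorem~\ref{teo:STE} and Corollary~\ref{cor:STE}, the step I expect to be the main obstacle is precisely the geometric uniqueness: one has to verify that two parametrizations of the same initial network which are both admissible for the Special Flow differ by a reparametrization with vanishing second derivative at the endpoints, which is exactly what makes the linear transport equation for the reparametrization solvable in the parabolic H\"older spaces. This is where conditions \eqref{cc2}, \eqref{cc3} and the angle bound \eqref{a0} enter.
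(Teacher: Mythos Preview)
Your approach is correct and uses exactly the same ingredients as the paper: Lemma~\ref{lemma:diffeo}, Theorem~\ref{teo:STE} and Corollary~\ref{cor:STE}. In fact the paper's own proof is a single sentence citing these three results, so you have not merely reproduced the argument but filled in details that the paper leaves implicit.

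The existence and regularity part is routine and matches what is already contained in the proof of Lemma~\ref{lemma:diffeo} (the construction of $\phi_0$ there is exactly the initial reparametrization you describe). Where your write-up adds real content is in the uniqueness argument. The paper does not explain how geometric uniqueness follows from uniqueness of the Special Flow, and your observation is the right one: two Special-Flow-admissible parametrizations of the same initial curve differ by a diffeomorphism $\psi_0^i$ with $\psi^i_{0,yy}=0$ at $y=0,1$, which is precisely what \eqref{condphizero} demands when the tangential component is the special one. Your verification of this---trivially at $x=1$ from \eqref{cc2}, and at $x=0$ from \eqref{cc3} together with the pairwise linear independence of the $\tau^i$ implied by \eqref{a0}---is clean and correct. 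Since the proof of Lemma~\ref{lemma:diffeo} imposes no constraint on $\phi_0$ beyond \eqref{condphizero} and the obvious diffeomorphism conditions (the choice $\phi_{0,y}=1$ at the boundary there is explicitly ``for instance''), the construction indeed extends $\psi_0^i$ in time and the uniqueness part of Theorem~\ref{teo:STE} closes the argument.

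One small point you leave implicit: you should note that $\psi_0^i\in C^{2,\alpha}$, which follows because $\psi_0^i=\ell_1^{-1}\circ\ell_2$ where $\ell_j$ are the $C^{2,\alpha}$ arc-length functions of the two regular $C^{2,\alpha}$ parametrizations. This is routine but needed for the linear PDE in the proof of Lemma~\ref{lemma:diffeo} to be solvable in the right space.
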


We eventually show that at the maximal existence time either the length of one curve goes to zero or the $H^1$-norm 
of the curvature blows up.

\begin{prop}\label{pro:GEO}
Let $T$ be the maximal time such that there exist solutions of the geometric problem as in Theorem \ref{teo:GEO}, then we have
\begin{equation}\label{eqminmax}
\liminf_{t\to T} \min_{i\in\{1,2,3\}} L(u^i(t)) = 0
\qquad\text{or}\qquad
\limsup_{t\to T} \max_{i\in\{1,2,3\}} \| \kappa^i_\varphi\|_{H^1(I)} = +\infty.
\end{equation}
\end{prop}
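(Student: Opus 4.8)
The plan is to argue by contradiction. Suppose $T<\infty$ and that \eqref{eqminmax} fails: there exist $\delta>0$ and $C_*>0$ with $L(u^i(t))\ge\delta$ and $\|\kappa^i_\varphi(t,\cdot)\|_{H^1(I)}\le C_*$ for all $t\in[0,T)$ and $i=1,2,3$. The aim is to show that the evolving network converges, as $t\to T$, to an admissible network in the sense of Definition~\ref{admtriod}, and then to apply Theorem~\ref{teo:GEO} with this network as initial datum to continue the flow beyond $T$, contradicting the maximality of $T$.

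First I would pass to the Special Flow parametrization via Lemma~\ref{lemma:diffeo}, so that each $u^i$ solves \eqref{SpecialFlow}. Since $\psi$ and $\varphi^\circ$ are bounded above and below by positive constants (see \eqref{boundmpiccolo} and ellipticity) and $\kappa^i_\varphi=(\phi(\theta^i)+\phi''(\theta^i))\,\kappa^i$, the bound on $\|\kappa^i_\varphi\|_{H^1(I)}$ together with the length bounds $L(u^i(t))\in[\delta,L_0]$ transfers to a uniform bound on $\|\kappa^i(t,\cdot)\|_{H^1(I)}$, hence, by the one–dimensional Sobolev embedding, to a uniform bound $\sum_i\sup_{[0,T)}\|\kappa^i(t,\cdot)\|_{L^\infty}\le C_0$. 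With this in hand, the maximum–principle argument of Section~\ref{behav} for $w=|u^i_t|^2$, based on Lemma~\ref{lemma2.1} and the evolution \eqref{l_{t}} of $\lambda$, gives uniform bounds on the tangential components $\lambda^i$ and on the velocities $V^i=|u^i_t|$ on $[0,T)$; in particular the $u^i$ are Lipschitz in $t$. Using $\partial_t\ln|u^i_x| = -\psi(\theta^i)(\kappa^i)^2+\lambda^i_s$ together with the control on $\lambda^i$ and its $s$–derivative obtained in the bootstrap below (the latter being, at the junction, a linear combination of the $\psi(\theta^j)\kappa^j$ and their derivatives via \eqref{expressionlambda} and \eqref{a0}), one gets uniform upper and lower bounds on $|u^i_x|$, so that the parametrizations stay non-degenerate and \eqref{SpecialFlow} is uniformly parabolic.

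The core of the argument is then a bootstrap. Starting from the uniform $H^1$–bound on $\kappa^i$, I would derive, by energy estimates for $\int_I(\partial_s^k\kappa^i)^2\,ds$ using the evolution equations of Lemma~\ref{lemma2.1} (differentiated in $s$ by means of the commutator $\partial_t\partial_s=\partial_s\partial_t+(\psi\kappa^2-\lambda_s)\partial_s$) together with interpolation (Gagliardo–Nirenberg) inequalities on $I$, uniform bounds on $\|\partial_s^k\kappa^i(t,\cdot)\|_{L^2(I)}$ for every $k$ on $[t_1,T)$ for some $t_1<T$; combined with the velocity and $|u^i_x|$ bounds, this upgrades to uniform bounds on $\|u^i(t,\cdot)\|_{C^{k}(I)}$ for all $k$. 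By Arzelà–Ascoli and the bounded velocity, $u^i(t,\cdot)$ then converges as $t\to T$ in $C^{2,\alpha}(I)$ (indeed in every $C^m$) to limit maps $u^i(T,\cdot)$. The boundary conditions $u^i(t,1)=P^i$, the concurrency $u^1(t,0)=u^2(t,0)=u^3(t,0)$, the Herring condition \eqref{HC}, the relation $\kappa^i_\varphi=0$ at $x=1$ (valid for all $t$, since $u^i_t=0$ there forces $\kappa^i=0$ by \eqref{boundmpiccolo}) and the velocity–matching identity \eqref{velcond} at the junction all pass to the limit, so $(u^1(T,\cdot),u^2(T,\cdot),u^3(T,\cdot))$ is an admissible network. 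Applying Theorem~\ref{teo:GEO} with this network as initial datum yields a geometric solution on $[T,T+\varepsilon)$ which, by the uniqueness part of Theorem~\ref{teo:GEO}, glues with the original solution to a geometric solution on $[0,T+\varepsilon)$, contradicting the maximality of $T$.

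The main obstacle is the bootstrap step: the energy estimates for $\int_I(\partial_s^k\kappa^i)^2\,ds$ produce boundary terms at $x=0$ and $x=1$ that must be absorbed. At the fixed endpoints $x=1$ they are controlled because $\kappa^i$ and the relevant derivatives vanish there (differentiate $u^i_t=\psi(\theta^i)\kappa^i\nu^i+\lambda^i\tau^i\equiv0$ in time). At the triple junction $x=0$ the boundary terms couple the three curves and involve the $\lambda^i$ and their derivatives; expressing the latter through \eqref{expressionlambda} and exploiting the quantitative non-degeneracy \eqref{a0}, and carefully differentiating the junction relations, one closes the estimates — and it is precisely here that the anisotropy enters the integral estimates in a way that has to be tracked, the technical difficulty anticipated in the Introduction.
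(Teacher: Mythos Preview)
Your contradiction setup is correct, but the route you take differs substantially from the paper's, and it is both harder and has a soft spot.

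The paper's proof does not take a limit as $t\to T$ and does not bootstrap higher derivatives of curvature. Instead it exploits the \emph{quantitative} short-time existence in Theorem~\ref{teo:STE}: the existence time for the Special Flow depends only on the $C^{2,\alpha}$ norm of the initial datum and on a lower bound for $|(\sigma^i)_x|$. Under the contradiction hypothesis, the $H^1$ bound on $\kappa_\varphi^i$ and the length bound give, after a constant-speed reparametrization followed by the perturbation of Lemma~\ref{lempara} (to enforce \eqref{cc2}--\eqref{cc3}), a uniform $C^{2,1/2}$ bound on the network at every time $T-\eps$. One then restarts the Special Flow at $T-\eps$ for a time $\tau>0$ independent of $\eps$, and glues this to the original geometric solution via Lemma~\ref{lemma:diffeo} and a smooth time interpolation of parametrizations. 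Choosing $\eps<\tau$ gives a geometric solution past $T$.

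Your approach can in principle be made to work, but two points need care. First, the control of $|u^i_x|$ from below via $\partial_t\ln|u^i_x|=-\psi(\theta^i)(\kappa^i)^2+\lambda^i_s$ requires a bound on $\lambda^i_s$; for the Special Flow $\lambda^i/\psi(\theta^i)=-\partial_x(1/|u^i_x|)$ is a parametrization-dependent quantity, and bounding $\lambda^i_s$ through your curvature bootstrap already presupposes control of $|u^i_x|$ to pass between $x$- and $s$-derivatives. You would need an independent argument (e.g.\ a maximum-principle estimate for $\lambda_s$ or for $|u_x|$ itself) to break this circularity. Second, the bootstrap you invoke at the junction is precisely the content of Section~\ref{sec:integralEst} (indeed it is the technical heart of the paper for the finer $L^2$ statement of Theorem~\ref{teo:sciop}); it is far from routine, and using it here to prove the weaker Proposition~\ref{pro:GEO} is overkill. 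The paper's restart-at-$T-\eps$ argument sidesteps both issues, needing only $C^{2,1/2}$ control at a single time, which follows directly from the $H^1$ curvature bound and the reparametrization lemmas.
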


\begin{proof}
Assume by contradiction that  $L(u^i(t))\ge\delta$ and $ \| \kappa^i_\varphi\|_{H^1(I)} \le C$, for all $i=1,2,3$ and $t\in [0,T)$, and for some $\delta,C>0$.
By Lemma \ref{lempara},
for any $\eps\in (0,T)$ we can reparametrize the admissible network $u^i(\cdot, T-\eps)$, $i=1,2,3$,  
in such a way that the reparametrizated network  $\sigma_\eps^i$
satisfy the compatibility conditions \eqref{cc1}, \eqref{cc2}, \eqref{cc3} and moreover
 $$
 \|\sigma_\eps^i\|_{C^{2,\frac 12}(I)}\le C', \qquad \|(\sigma_\eps^i)_x\|_{L^\infty(I)}\ge \frac{1}{C'},
 $$
where the constant $C'>0$ depends only on $\delta$ and $C$. Indeed, we can first reparametrize $u^i(\cdot, T-\eps)$ by constant speed. 
Then we notice that for the so obtained parametrization $v^{i}$ the uniform bound on the (anisotropic) curvature yields that 
$$\|\frac{v^{i}_{xx}}{|v_{x}^{i}|^{2}}\|_{C^{0,1/2}}= \|v^{i}_{xx} (\mathcal{L}(v^{i}))^{2}\|_{C^{0,1/2}} \leq C.$$ 
For  the compatibility conditions \eqref{cc1}, \eqref{cc2}, \eqref{cc3}  to hold we need now to reparametrize $v^{i}$ again (as explained in \eqref{condphizero} with $v$ instead of $u$, so that $|v_{x}|=1/\mathcal{L}(v^{i})$ and $v_{xx} \cdot v_{x}=0$). As appropriate diffeomorphisms $\phi^{i}$  we  take now   suitable perturbations near the junction of the identity map  such that $(\phi^{i})'(0)=(\phi^{i})'(1)=1$, $(\phi^{i})'>0$ on $[0,1]$, \eqref{condphizero} holds,  and the $\| \phi^{i}\|_{C^{2,1/2}}$-norm  is uniformly bounded by a constant depending only on  $C$, $\delta$, $\mathcal{L}(u^i( T-\eps))$, and the anisotropy map (see \eqref{condphizero} and recall \eqref{boundlbdry}, \eqref{boundmpiccolo}, Lemma~\ref{lempara}). The maps $\sigma^{i}_{\epsilon}= v^{i}(\phi^{i})$ satisfy the claims.

Then, by Theorem \ref{teo:STE} there exist solutions $u^i_\eps$ to the special flow starting from $\sigma_\eps^i$ at $T-\eps$,
defined on the time interval $[T-\eps,T-\eps+\tau)$,
where $\tau>0$ depends only on $\delta$ and $C'$ (in particular it is independent of $\eps$). 
By choosing $\eps$ small enough we then have $T-\eps+\tau>T$. 
Notice that, by Lemma \ref{lemma:diffeo} (see also Corollary \ref{cor:STE}) 
there exist smooth diffeomorphisms $\phi^i_\eps:(a,b)\times [0,1]\to [0,1]$, $(a,b) \subset (T-\eps,T)$ such that 
$u_\eps^i = u^i\circ \phi^i_\eps$, $i=1,2,3$. Let now $\eta\in C^\infty(\R)$ be such that $0\le \eta(t)\le1$ for all $t$,
$\eta(t)=0$  for $t\le a$ and $\eta(t)=1$  for all $t\ge b$, with $a<b$ and $[a,b]\subset (T-\eps,T)$,
then the functions
\[
\tilde u^i(t,x) = \left\{\begin{array}{lll}
u^i(t,x) && \text{for }(t,x)\in [0,a]\times[0,1]
\\
u^i(t,(1-\eta(t))x+\eta(t)\phi^i_\eps(t,x)) && \text{for }(t,x)\in (a,b)\times[0,1]
\\
u_\eps^i(t,x) && \text{for }(t,x)\in [b,T-\eps+\tau)\times[0,1]
\end{array}\right.
\]
give rise to geometric solution defined on the time interval $[0,T-\eps+\tau)$, contradicting the maximality of $T$.
\end{proof}


\section{Integral estimates and main result}\label{sec:integralEst}

In this section we derive integral estimates for a solution of the geometric problem (recall Section~\ref{sec:geoprob}).
We shall always assume that the flow is smooth up to the initial time $t=0$, which is not restrictive in view of Theorem \ref{teo:GEO}.  

We start with a general lemma.
\begin{lemma}\label{structurelemma}
Let $u:I\to\R^{2}$ satisfy \eqref{acsfT} for some smooth map $\lambda$. Let $S:I\to \R^{2}$ be a normal vector field along the curve $u$, that is $(S \cdot \tau)\equiv 0$.
Then
\begin{align}\label{structure}
\frac{d}{dt} &\left ( \frac{1}{2} \int_{I} |S|^{2} \frac{1}{\varphi^{\circ} (\nu)}  ds\right) + \int_{I} |S_{s}|^{2} \frac{\psi (\theta)}{\varphi^{\circ}(\nu)} ds \\
& = \left[ (S \cdot S_{s}) \frac{\psi (\theta)}{\varphi^{\circ}(\nu) }+\frac{1}{2}|S|^{2}\frac{\lambda}{\varphi^{\circ}(\nu)}  \right]_{0}^{1} \notag \\
& \qquad +\int_{I} (S \cdot (S_{t} -\psi(\theta) S_{ss}))  \frac{1}{\varphi^{\circ} (\nu)}  ds 
-\int_{I} (S \cdot S_{s}) \frac{\lambda}{\varphi^{\circ}(\nu)} ds  \notag\\
& \qquad- \int_{I} (S \cdot S_{s}) \frac{(\psi (\theta))_{s}}{\varphi^{\circ}(\nu)}  ds 
- \frac{1}{2} \int_{I} |S|^{2}  \frac{\psi(\theta)\kappa^{2}}{\varphi^{\circ} (\nu)}   ds \notag \\
& \qquad 
+\int_{I} \frac{D \varphi^{\circ} (\nu) \cdot \tau}{(\varphi^{\circ} (\nu))^{2}} \left( \frac{1}{2} |S|^{2} (\psi(\theta) \kappa)_{s}
 -\psi(\theta) \kappa (S \cdot S_{s}) \right) ds \notag
\end{align}
\end{lemma}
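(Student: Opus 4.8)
The strategy is a direct computation: differentiate the weighted $L^2$ quantity under the integral sign, use the evolution of the arclength element from Lemma~\ref{lemma2.1}, and reorganize terms until the claimed identity appears. First I would write
\[
\frac{d}{dt}\left(\frac12\int_I |S|^2\frac{1}{\varphi^\circ(\nu)}\,ds\right)
=\int_I (S\cdot S_t)\frac{1}{\varphi^\circ(\nu)}\,ds
-\frac12\int_I |S|^2\,\partial_t\!\left(\frac{1}{\varphi^\circ(\nu)}\right)ds
+\frac12\int_I |S|^2\frac{1}{\varphi^\circ(\nu)}\,\partial_t(ds),
\]
where $\partial_t(ds)=(\psi(\theta)\kappa^2-\lambda_s)\,ds$ by the first formula in Lemma~\ref{lemma2.1}, and $\partial_t(1/\varphi^\circ(\nu))=-\frac{D\varphi^\circ(\nu)\cdot\nu_t}{(\varphi^\circ(\nu))^2}$ with $\nu_t=-[(\psi(\theta)\kappa)_s+\lambda\kappa]\tau$ from the same lemma; hence $\partial_t(1/\varphi^\circ(\nu))=\frac{D\varphi^\circ(\nu)\cdot\tau}{(\varphi^\circ(\nu))^2}[(\psi(\theta)\kappa)_s+\lambda\kappa]$. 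The term $\lambda\kappa\,(D\varphi^\circ(\nu)\cdot\tau)$ needs to be combined with part of the $-\lambda_s$ contribution via integration by parts, which is where the $\lambda$-boundary term $\frac12|S|^2\lambda/\varphi^\circ(\nu)$ and the interior term $-\int_I(S\cdot S_s)\lambda/\varphi^\circ(\nu)\,ds$ in the statement are generated; I would also use that $(S\cdot S_s)=\frac12\partial_s|S|^2$ since $S$ is normal, so $\partial_s|S|^2=2(S\cdot S_s)$ with no curvature correction.

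Next I would introduce the diffusion term artificially: add and subtract $\psi(\theta)S_{ss}$ inside the $S_t$ integral, so that
\[
\int_I (S\cdot S_t)\frac{1}{\varphi^\circ(\nu)}\,ds
=\int_I \big(S\cdot(S_t-\psi(\theta)S_{ss})\big)\frac{1}{\varphi^\circ(\nu)}\,ds
+\int_I (S\cdot\psi(\theta)S_{ss})\frac{1}{\varphi^\circ(\nu)}\,ds,
\]
and then integrate the last integral by parts in $s$. This produces the boundary term $\big[(S\cdot S_s)\psi(\theta)/\varphi^\circ(\nu)\big]_0^1$, the dissipation term $-\int_I |S_s|^2\psi(\theta)/\varphi^\circ(\nu)\,ds$ (which I move to the left-hand side), and a cross term $-\int_I (S\cdot S_s)\,\partial_s\!\big(\psi(\theta)/\varphi^\circ(\nu)\big)\,ds$. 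Expanding $\partial_s(\psi(\theta)/\varphi^\circ(\nu))=\frac{(\psi(\theta))_s}{\varphi^\circ(\nu)}-\frac{\psi(\theta)(\varphi^\circ(\nu))_s}{(\varphi^\circ(\nu))^2}$ and using $(\varphi^\circ(\nu))_s=D\varphi^\circ(\nu)\cdot\nu_s=-\kappa\,D\varphi^\circ(\nu)\cdot\tau$ yields exactly the term $-\int_I (S\cdot S_s)(\psi(\theta))_s/\varphi^\circ(\nu)\,ds$ together with the contribution $-\psi(\theta)\kappa(S\cdot S_s)\frac{D\varphi^\circ(\nu)\cdot\tau}{(\varphi^\circ(\nu))^2}$ appearing in the last line of the statement.

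Finally I would collect all curvature-weighted $|S|^2$ terms. The $\frac12|S|^2\psi(\theta)\kappa^2/\varphi^\circ(\nu)$ term comes directly from $\partial_t(ds)$; the $\frac12|S|^2(\psi(\theta)\kappa)_s\,\frac{D\varphi^\circ(\nu)\cdot\tau}{(\varphi^\circ(\nu))^2}$ term comes from $\partial_t(1/\varphi^\circ(\nu))$; and the $\lambda\kappa$ piece of $\partial_t(1/\varphi^\circ(\nu))$, namely $-\frac12\int_I|S|^2\lambda\kappa\frac{D\varphi^\circ(\nu)\cdot\tau}{(\varphi^\circ(\nu))^2}\,ds$, must cancel against a term arising from integrating $-\int_I(S\cdot S_s)\lambda/\varphi^\circ(\nu)\,ds$-type manipulations or be absorbed — I would track carefully how $\lambda_s$ in $\partial_t(ds)$ integrates by parts against $|S|^2/\varphi^\circ(\nu)$, producing $\int_I \lambda\,\partial_s(|S|^2/\varphi^\circ(\nu))\,ds$ and the boundary term, and check that the $\lambda\kappa$ pieces match. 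The main obstacle is precisely this bookkeeping: keeping straight the several places where the anisotropic weight $1/\varphi^\circ(\nu)$ generates extra $D\varphi^\circ(\nu)\cdot\tau$ factors (these all vanish in the isotropic case), and confirming that the $\lambda$-dependent contributions from $\partial_t(ds)$, from $\nu_t$, and from integration by parts combine into exactly the two $\lambda$-terms displayed (one boundary, one interior) with no leftover. Everything else is routine product-rule and integration-by-parts manipulation.
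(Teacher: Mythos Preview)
Your approach is essentially identical to the paper's: differentiate under the integral, add and subtract $\psi(\theta)S_{ss}$, integrate by parts, and collect the anisotropic-weight terms into the $D\varphi^\circ(\nu)\cdot\tau$ bracket. Two sign slips to fix in your sketch: the product rule gives $+\tfrac12\int_I|S|^2\,\partial_t(1/\varphi^\circ(\nu))\,ds$, not minus; and $(ds)_t=(\lambda_s-\psi(\theta)\kappa^2)\,ds$, the opposite of what you wrote (from the commutator in Lemma~\ref{lemma2.1}, $(|u_x|)_t=(\lambda_s-\psi\kappa^2)|u_x|$). With those corrections the $\lambda\kappa$ contribution from $\partial_t(1/\varphi^\circ(\nu))$ cancels exactly against the $\lambda\,(1/\varphi^\circ(\nu))_s$ term arising from the integration by parts of $\tfrac12\int_I|S|^2\lambda_s/\varphi^\circ(\nu)\,ds$, which resolves the bookkeeping worry you flagged at the end; this is precisely the combination the paper isolates in its final displayed identity.
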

\begin{proof}
Since $$(ds)_{t}= (\lambda_{s} - (\kappa \nu \cdot u_{t})) ds =(\lambda_{s} -\psi(\theta)\kappa^{2}) ds$$ a direct computation gives
\begin{align*}
\frac{d}{dt} &\left ( \frac{1}{2} \int_{I} |S|^{2} \frac{1}{\varphi^{\circ} (\nu)}  ds\right) + \int_{I} |S_{s}|^{2} \frac{\psi (\theta)}{\varphi^{\circ}(\nu)} ds \\
& =\int_{I} (S \cdot (S_{t} -\psi S_{ss}))  \frac{1}{\varphi^{\circ} (\nu)}  ds + \frac{1}{2} \int_{I} |S|^{2}  \left(\frac{1}{\varphi^{\circ} (\nu)}  ds \right)_{t} +\left[ (S \cdot S_{s}) \frac{\psi (\theta)}{\varphi^{\circ}(\nu)}  \right]_{0}^{1}
\\
& \qquad - \int_{I} (S \cdot S_{s}) \left(\frac{\psi (\theta)}{\varphi^{\circ}(\nu)} \right)_{s} ds\\
&=\int_{I} (S \cdot (S_{t} -\psi(\theta) S_{ss}))  \frac{1}{\varphi^{\circ} (\nu)}  ds + \frac{1}{2} \int_{I} |S|^{2}  \left(\frac{1}{\varphi^{\circ} (\nu)}   \right)_{t} ds \\
&\qquad +\left[ (S \cdot S_{s}) \frac{\psi (\theta)}{\varphi^{\circ}(\nu) }+\frac{1}{2}|S|^{2}\frac{\lambda}{\varphi^{\circ}(\nu)}  \right]_{0}^{1}
\\
& \qquad - \int_{I} (S \cdot S_{s}) \left(\frac{\psi (\theta)}{\varphi^{\circ}(\nu)} \right)_{s} ds
- \frac{1}{2} \int_{I} |S|^{2}  \frac{\psi(\theta)\kappa^{2}}{\varphi^{\circ} (\nu)}   ds \\
& \qquad 
-\int_{I} (S \cdot S_{s}) \frac{\lambda}{\varphi^{\circ}(\nu)} ds 
-\frac{1}{2} \int_{I} |S|^{2} \lambda \left( \frac{1}{\varphi^{\circ}(\nu)} \right)_{s} ds. 
 \end{align*}
 Using the expression for $\theta_{t}$ from Lemma~\ref{lemma2.1} we observe
 \begin{align*}
 \frac{1}{2}& |S|^{2} \left( \frac{1}{\varphi^{\circ} (\nu)}\right)_{t} - (S \cdot S_{s}) \psi (\theta)\left( \frac{1}{\varphi^{\circ} (\nu)}\right)_{s} -\frac{1}{2} |S|^{2} \lambda \left( \frac{1}{\varphi^{\circ} (\nu)}\right)_{s}\\
 & =\frac{D \varphi^{\circ} (\nu) \cdot \tau}{(\varphi^{\circ} (\nu))^{2}} \left( \frac{1}{2} |S|^{2} (\psi(\theta) \kappa)_{s}
 -\psi(\theta) \kappa (S \cdot S_{s}) \right)
 \end{align*}
 and the claim follows.
\end{proof}

Also we recall some useful interpolation estimates (here we cite \cite[Proposition~3.11, Remark~3.12]{MNT}):
\begin{prop}\label{IEst}
Let $u$ be a smooth regular curve in $\R^{2}$ with finite length $L$. If $f$ is a smooth function defined on $u$ and $m \geq 1$, $p \in [2, +\infty]$, we have the estimates
\begin{align*}
\|\partial_{s}^{n } f \|_{ L^{p}} \leq C_{n,m,p} \| \partial_{s}^{m} f \|_{L^{2}}^{\sigma} \| f \|_{L^{2}}^{1-\sigma} +\frac{B_{n,m,p}}{L^{m\sigma}} \| f \|_{L^{2}}
\end{align*}
for every $n \in \{0, \ldots, m-1 \}$ where $\sigma= \frac{n+1/2 -1/p}{m}$ and the constants $C_{n,m,p}$, $B_{n,m,p}$ are independent of $u$. In particular
\begin{align*}
\|\partial_{s}^{n } f \|_{ L^{\infty}} \leq C_{n,m} \| \partial_{s}^{m} f \|_{L^{2}}^{\sigma} \| f \|_{L^{2}}^{1-\sigma} +\frac{B_{n,m}}{L^{m\sigma}} \| f \|_{L^{2}} \qquad \text{ with } \sigma=\frac{n+1/2}{m}.
\end{align*}
\end{prop}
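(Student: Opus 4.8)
The plan is to obtain the estimate by reducing, via an arc--length reparametrization, to the classical one--dimensional Gagliardo--Nirenberg interpolation inequality on a fixed interval, and then rescaling. First I would parametrize $u$ by arc length, so that $f$ is identified with a smooth function on $[0,L]$ and each arc--length derivative $\partial_s^k f$ becomes the ordinary derivative $f^{(k)}$; in this way the geometry of $u$ enters the inequality only through the scalar $L$. The ingredient to be used on the unit interval is: for every $g\in C^\infty([0,1])$, every $0\le n<m$, every $p\in[2,+\infty]$ and $\sigma=\tfrac{n+1/2-1/p}{m}$,
\begin{align}\label{GNflat}
\|g^{(n)}\|_{L^p([0,1])}\leq C_{n,m,p}\,\|g^{(m)}\|_{L^2([0,1])}^{\sigma}\,\|g\|_{L^2([0,1])}^{1-\sigma}+B_{n,m,p}\,\|g\|_{L^2([0,1])}.
\end{align}
This is entirely standard; to be self-contained one can prove it first for $m=1$, $n=0$ (from $g(x)^2=\int_0^1 g(y)^2\,dy+\int_0^1\!\!\int_y^x 2g\,g'\,dt\,dy$ one gets $\|g\|_{L^\infty}^2\leq\|g\|_{L^2}^2+2\|g\|_{L^2}\|g'\|_{L^2}$, whence the case $m=1$ by interpolating $\|g\|_{L^p}\leq\|g\|_{L^\infty}^{1-2/p}\|g\|_{L^2}^{2/p}$ and using elementary inequalities), and then in general by induction on $m$, applying the $m=1$ case to $g^{(n)}$ and inserting the bounds $\|g^{(j)}\|_{L^2}\leq C\|g^{(m)}\|_{L^2}^{j/m}\|g\|_{L^2}^{1-j/m}+C\|g\|_{L^2}$, $0\le j\le m$.

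The second step is the scaling. Setting $g(x):=f(Lx)$ for $x\in[0,1]$ gives $g^{(k)}(x)=L^k f^{(k)}(Lx)$, hence $\|g^{(k)}\|_{L^p([0,1])}=L^{k-1/p}\|f^{(k)}\|_{L^p([0,L])}$ (with $1/p:=0$ if $p=\infty$). Substituting these identities into \eqref{GNflat} and dividing through by $L^{n-1/p}$ produces a right-hand side in which the interpolation term is multiplied by $L^{\sigma m-1/2-n+1/p}$ and the remainder term by $L^{-1/2-n+1/p}$. Because $\sigma$ was chosen so that $\sigma m=n+\tfrac12-\tfrac1p$, the first exponent is $0$ and the second equals $-\sigma m$; this is precisely the form asserted in Proposition~\ref{IEst}, with constants depending only on $n,m,p$ (so independent of $u$). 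The $p=\infty$ case, $\sigma=\tfrac{n+1/2}{m}$, is the obvious specialization, and if $f$ is vector-valued the inequality is applied componentwise.

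I do not expect a genuine obstacle. The only content is the flat interpolation inequality \eqref{GNflat}, which is classical (see the references in \cite{MNT}); everything else is bookkeeping of powers of $L$, and the exponents are forced to cancel in exactly the right way by the homogeneity encoded in the definition of $\sigma$. The one point worth a word of care is that the arc--length reparametrization is what allows the constants to be taken independent of the curve, since after it $\partial_s$ is an honest derivative and the length of the curve is the only residual parameter.
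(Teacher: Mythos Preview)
Your proof is correct and follows the standard route (arc-length reparametrization plus scaling of the flat one-dimensional Gagliardo--Nirenberg inequality). The paper does not give its own proof of this proposition at all: it merely \emph{recalls} the statement from \cite[Proposition~3.11, Remark~3.12]{MNT}, so there is nothing to compare against beyond noting that your argument is precisely the classical one underlying that reference.
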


\subsection{Estimates on $\|\kappa\|_{L^{2}}$ and $\| \kappa_{\varphi}\|_{L^{2}}$}
We now apply the  Lemma~\ref{structurelemma} for the special choice of $S = \psi(\theta)\kappa \nu$, which is the normal component of the velocity vector. 
Using Lemma~\ref{lemma2.1} we compute (here and below we write $w^{\perp}=(w\cdot \nu) \nu$ for the normal component of a vector $w\in \R^{2}$)
\begin{align*}
S&= (u_{t})^{\perp} = \psi(\theta)\kappa \nu,   & |S|^{2} =  (\psi(\theta)\kappa)^{2},\\
S_{s}& =(\psi(\theta)\kappa)_{s} \nu - (\psi(\theta)\kappa) \kappa \tau,   & |S_s|^{2} =  ((\psi(\theta)\kappa)_{s})^{2} + (\psi(\theta)\kappa)^{2} \kappa^{2},
\end{align*}
as well as
\begin{align*}
S_{ss}&=     ((\psi(\theta)\kappa)_{ss} - (\psi(\theta)\kappa) \kappa^{2}) \nu + (\ldots) \tau, \\
S_{t}&= [ \psi'(\theta) ( (\psi(\theta) \kappa)_{s} +\lambda \kappa)\kappa +\psi(\theta) ( (\psi(\theta) \kappa)_{ss}+ \psi(\theta)\kappa^{3} + \lambda \kappa_{s}) ]   \nu  + (\ldots) \tau\\
(S \cdot ( S_{t} -\psi(\theta)S_{ss})) &= \psi(\theta) \psi'(\theta) \kappa^{2} (\psi(\theta) \kappa)_{s} + 2 (\psi(\theta)\kappa)^{3} \kappa + \lambda \frac{1}{2} (|S^{2}|)_{s}\\
&= (\psi(\theta))_{s} (S \cdot S_{s}) + 2 (\psi(\theta)\kappa)^{3} \kappa + \lambda  (S \cdot S_{s}).
\end{align*}
Therefore the integral terms appearing in the right hand-side of ~\eqref{structure} amount to
\begin{align*}
&\int_{I} (S \cdot (S_{t} -\psi(\theta) S_{ss}))  \frac{1}{\varphi^{\circ} (\nu)}  ds 
-\int_{I} (S \cdot S_{s}) \frac{\lambda}{\varphi^{\circ}(\nu)} ds \\
& \qquad- \int_{I} (S \cdot S_{s}) \frac{(\psi (\theta))_{s}}{\varphi^{\circ}(\nu)}  ds
- \frac{1}{2} \int_{I} |S|^{2}  \frac{\psi(\theta)\kappa^{2}}{\varphi^{\circ} (\nu)}   ds \\
& \qquad 
+\int_{I} \frac{D \varphi^{\circ} (\nu) \cdot \tau}{(\varphi^{\circ} (\nu))^{2}} \left( \frac{1}{2} |S|^{2} (\psi(\theta) \kappa)_{s}
 -\psi(\theta) \kappa (S \cdot S_{s}) \right) ds\\
 &=\int_{I}  \frac{3}{2} \frac{(\psi(\theta)\kappa)^{3} \kappa }{\varphi^{\circ}(\nu)}  ds -\frac{1}{2}\int_{I} \frac{D \varphi^{\circ} (\nu) \cdot \tau}{(\varphi^{\circ} (\nu))^{2}} (\psi(\theta)\kappa)^{2} (\psi(\theta)\kappa)_{s} ds.
\end{align*}
In particular notice that with $S_{t} -\psi(\theta) S_{ss}$ high order terms disappear.
Equation \eqref{structure} becomes
\begin{align}\label{equaz31}
\frac{d}{dt} &\left ( \frac{1}{2} \int_{I}  \frac{(\psi(\theta)\kappa)^{2}}{\varphi^{\circ} (\nu)}  ds\right) + \int_{I} |(\psi(\theta)\kappa)_{s}|^{2} \frac{\psi (\theta)}{\varphi^{\circ}(\nu)} ds 
=\left[ (S \cdot S_{s}) \frac{\psi (\theta)}{\varphi^{\circ}(\nu) }+\frac{1}{2}|S|^{2}\frac{\lambda}{\varphi^{\circ}(\nu)}  \right]_{0}^{1}\\
&\quad
+\int_{I}  \frac{1}{2} \frac{(\psi(\theta)\kappa)^{3} \kappa }{\varphi^{\circ}(\nu)}  ds -\frac{1}{2}\int_{I} \frac{D \varphi^{\circ} (\nu) \cdot \tau}{(\varphi^{\circ} (\nu))^{2}} (\psi(\theta)\kappa)^{2} (\psi(\theta)\kappa)_{s} ds.
\end{align}
For the boundary term we notice  that
\begin{align*}
(S \cdot S_{s}) \frac{\psi (\theta)}{\varphi^{\circ}(\nu) } = (S \cdot  (S_{s})^{\perp}) \frac{\psi (\theta)}{\varphi^{\circ}(\nu) } 
=(u_{t} \cdot  (S_{s})^{\perp}) \frac{\psi (\theta)}{\varphi^{\circ}(\nu) } .
\end{align*}
This motivates the choice of $S$ since at the boundary the velocity $u_{t}$ is either zero (at the fixed boundary point) or coincides with the velocity of the other curves meeting at the triple junction. We can then lower the order of the  terms at the moving  boundary point  by exploiting the boundary conditions. More precisely we write
\begin{align*}
(S \cdot S_{s}) \frac{\psi (\theta)}{\varphi^{\circ}(\nu) }& =(u_{t} \cdot (\psi(\theta)\kappa)_{s} \nu ) \frac{\psi (\theta)}{\varphi^{\circ}(\nu) } = (u_{t} \cdot  ((\psi(\theta)\kappa)_{s} +\lambda \kappa )\nu ) \frac{\psi (\theta)}{\varphi^{\circ}(\nu) } 
- (u_{t} \cdot  \lambda \kappa \nu ) \frac{\psi (\theta)}{\varphi^{\circ}(\nu) } \\
& = (u_{t} \cdot  \frac{\psi (\theta) \theta_{t}}{\varphi^{\circ}(\nu) } \nu) -  \lambda \psi(\theta) \kappa^{2}  \frac{\psi (\theta)}{\varphi^{\circ}(\nu) } 
\end{align*}
which yields
\begin{align*}
\left[ (S \cdot S_{s}) \frac{\psi (\theta)}{\varphi^{\circ}(\nu) }+\frac{1}{2}|S|^{2}\frac{\lambda}{\varphi^{\circ}(\nu)}  \right]_{0}^{1} =
 \left [ (u_{t} \cdot  \frac{\psi (\theta) \theta_{t}}{\varphi^{\circ}(\nu) } \nu)  
  -\frac{1}{2} \lambda 
\frac{(\psi (\theta) \kappa)^{2}}{\varphi^{\circ}(\nu) }\right]_{0}^{1}.
\end{align*}

Derivation in time of \eqref{HC} gives (at the junction)
\begin{align*}
0 =-\sum_{i=1}^{3} (D \varphi^{\circ}(\nu^{i}))_{t} = \sum_{i=1}^{3} D^{2}\varphi^{\circ}(\nu^{i}) \theta^{i}_{t} \tau^{i}
=\sum_{i=1}^{3} \psi(\theta^{i}) \theta^{i}_{t}\frac{\tau^{i}}{\varphi^{\circ}(\nu^{i})}.
\end{align*}
After multiplication with $\tiny{\left(\begin{array}{cc} 0&-1\\1&0 \end{array} \right)}$, which rotates vectors by $\pi/2$, we finally infer that
\begin{align*}
0=\sum_{i=1}^{3} \psi(\theta^{i}) \theta^{i}_{t}\frac{\nu^{i}}{\varphi^{\circ}(\nu^{i})}
\end{align*}
holds at the junction point.
In particular, since   $u_{t}^{1}=u_{t}^{2}=u_{t}^{3}$ at the junction point, we infer
\begin{align*}
\sum_{i=1}^{3}(u_{t}^{i} \cdot  \frac{\psi (\theta^{i}) \theta_{t}^{i}}{\varphi^{\circ}(\nu^{i}) } \nu^{i})  =
(u_{t}^{i} \cdot \sum_{i=1}^{3} \frac{\psi (\theta^{i}) \theta_{t}^{i}}{\varphi^{\circ}(\nu^{i}) } \nu^{i})  =0.
\end{align*}
Summing \eqref{equaz31} for every curve in the network, we therefore obtain
\begin{align}\label{erstestufe}
\sum_{i=1}^{3} \frac{d}{dt} &\Big( \frac{1}{2} \int_{I}  \frac{(\psi(\theta^{i})\kappa^{i})^{2}}{\varphi^{\circ} (\nu^{i})}  ds\Big) +  \sum_{i=1}^{3}\int_{I} |(\psi(\theta^{i})\kappa^{i})_{s}|^{2} \frac{\psi (\theta^{i})}{\varphi^{\circ}(\nu^{i})} ds   
= \sum_{i=1}^{3}  \frac{1}{2} \lambda^{i} 
\frac{(\psi (\theta^{i}) \kappa^{i})^{2}}{\varphi^{\circ}(\nu^{i})} \Big|_{x=0}  \\
&\quad +\sum_{i=1}^{3}
\left( \int_{I}  \frac{1}{2} \frac{(\psi(\theta^{i})\kappa^{i})^{3} \kappa^{i} }{\varphi^{\circ}(\nu^{i})}  ds -\frac{1}{2}\int_{I} \frac{D \varphi^{\circ} (\nu^{i}) \cdot \tau^{i}}{(\varphi^{\circ} (\nu^{i}))^{2}} (\psi(\theta^{i})\kappa^{i})^{2} (\psi(\theta^{i})\kappa^{i})_{s} ds \right). \notag
\end{align}
A more geometrical interpretation of the above expression is discussed in Remark~\ref{vameglio?} below.
Using \eqref{boundmpiccolo} as well as
$ C^{-1} \leq \varphi^{\circ} (\nu)\leq C$ and  $|D\varphi^{\circ} (\nu)|\leq C$ (recall that $D\varphi^{\circ} (\nu)$ lies on the Wulff shape) we can write
\begin{align*}
\sum_{i=1}^{3}  \frac{d}{dt} &\left( \frac{1}{2} \int_{I}  \frac{(\psi(\theta^{i})\kappa^{i})^{2}}{\varphi^{\circ} (\nu^{i})}  ds\right) + m\int_{I} |(\psi(\theta^{i})\kappa^{i})_{s}|^{2} \frac{1}{\varphi^{\circ}(\nu^{i})} ds \\
&\leq \sum_{i=1}^{3} C |\lambda^{i}| (\psi(\theta^{i}) \kappa^{i})^{2} \Big|_{x=0}  +\sum_{i=1}^{3}
\left( C_{\epsilon}\int_{I}  (\psi(\theta^{i})\kappa^{i})^{4}   ds + \epsilon  \int_{I} |(\psi(\theta^{i})\kappa^{i})_{s}|^{2} \frac{1}{\varphi^{\circ}(\nu^{i})} ds \right).
\end{align*}
Choosing $\epsilon=m/2$ we obtain
\begin{align*}
\sum_{i=1}^{3}  \frac{d}{dt} &\left( \frac{1}{2} \int_{I}  \frac{(\psi(\theta^{i})\kappa^{i})^{2}}{\varphi^{\circ} (\nu^{i})}  ds\right) + \frac{m}{2}\int_{I} |(\psi(\theta^{i})\kappa^{i})_{s}|^{2} \frac{1}{\varphi^{\circ}(\nu^{i})} ds \\
&\leq \sum_{i=1}^{3} C |\lambda^{i}| ( \psi(\theta^{i})\kappa^{i})^{2} \Big|_{x=0}  +\sum_{i=1}^{3} C\|\psi(\theta^{i})\kappa^{i}\|_{L^{4}(I)}^{4} .
\end{align*}
Next we apply interpolation estimates, under the assumption that we have a uniform control (from below) of the lengths of the curves composing the network.
By Proposition~\ref{IEst} it follows
\begin{align*}
\|\psi(\theta^{i})\kappa^{i}\|_{L^{4}}^{4} &\leq C( \| (\psi(\theta^{i})\kappa^{i})_{s} \|_{L^{2}}^{1/4} \| \psi(\theta^{i})\kappa^{i} \|_{L^{2}}^{3/4} +  \| \psi(\theta^{i})\kappa^{i} \|_{L^{2}} )^{4} \\
&\leq  C \| (\psi(\theta^{i})\kappa^{i})_{s} \|_{L^{2}} \| \psi(\theta^{i})\kappa^{i} \|_{L^{2}}^{3} + C \| \psi(\theta^{i})\kappa^{i} \|_{L^{2}}^{4}\\
& \leq \epsilon \int_{I} |(\psi(\theta^{i})\kappa^{i})_{s}|^{2} \frac{1}{\varphi^{\circ}(\nu^{i})} ds  + C_{\epsilon} \left( \int_{I}  \frac{(\psi(\theta^{i})\kappa^{i})^{2}}{\varphi^{\circ} (\nu^{i})}  ds\right)^{3} +C.
\end{align*}
Moreover, for the boundary term we use \eqref{boundlbdry} and Proposition~\ref{IEst} to infer
\begin{align*}
|\lambda^{i}| (\psi(\theta^{i})\kappa^{i})^{2} \Big|_{x=0} &\leq  C(\psi(\theta^{i})\kappa^{i})^{2} \sum_{j=1}^{3} |\psi(\theta^{j})\kappa^{j}| \Big|_{x=0} \leq C \| \psi(\theta^{i})\kappa^{i}\|_{L^{\infty}}^{2}\sum_{j=1}^{3}\|\psi(\theta^{j}) \kappa^{j}\|_{L^{\infty}}\\
& \leq C (\|(\psi(\theta^{i})\kappa^{i})_{s} \|_{L^{2}}^{1/2} \|\psi(\theta^{i})\kappa^{i} \|_{L^{2}}^{1/2} +   \|\psi(\theta^{i})\kappa^{i} \|_{L^{2}})^{2} \\
 & \qquad \cdot ( \sum_{j=1}^{3}(\|(\psi(\theta^{j})\kappa^{j})_{s} \|_{L^{2}}^{1/2} \|\psi(\theta^{j})\kappa^{j} \|_{L^{2}}^{1/2} +   \|\psi(\theta^{j})\kappa^{j} \|_{L^{2}}))\\
 & \leq \epsilon \sum_{j=1}^{3} \int_{I} |(\psi(\theta^{j})\kappa^{j})_{s}|^{2} \frac{1}{\varphi^{\circ}(\nu^{j})} ds  + C_{\epsilon} \sum_{j=1}^{3} \left( \int_{I}  \frac{(\psi(\theta^{j})\kappa^{j})^{2}}{\varphi^{\circ} (\nu^{j})}  ds\right)^{3} +C_{\epsilon},
\end{align*}
where for the last step,  we have used several times the Young-inequality.
Putting all estimates together and choosing $\epsilon$ appropriately we infer
\begin{align*}
\sum_{i=1}^{3}  \frac{d}{dt} \left( \frac{1}{2} \int_{I}  \frac{(\psi(\theta^{i})\kappa^{i})^{2}}{\varphi^{\circ} (\nu^{i})}  ds\right)  
&\leq\sum_{i=1}^{3} C \left( \int_{I}  \frac{(\psi(\theta^{i})\kappa^{i})^{2}}{\varphi^{\circ} (\nu^{i})}  ds\right)^{3} +C\\
&\leq C \sum_{i=1}^{3}\left(1+ \int_{I}  \frac{(\psi(\theta^{i})\kappa^{i})^{2}}{\varphi^{\circ} (\nu^{i})}  ds \right)^{3},
\end{align*}
where $C$ depends on the anisotropy (precisely \eqref{boundmpiccolo}, $a_{0}$ as in \eqref{a0}, as well as
$ C^{-1} \leq \varphi^{\circ} (\nu)\leq C$ and  $|D\varphi^{\circ} (\nu)|\leq C$) and on the uniform bound from below 
on the lengths of the curves. Note also that so far only  information of $\lambda^{i}$ at the boundary has played a role.
 Recalling that $\kappa_{\varphi}=\frac{\psi(\theta)}{\varphi^{\circ}(\nu)} \kappa$ and integration in time  for $0 \leq t_{1} < t_{2}$ yields
\begin{align*}
-\frac{1}{(\sum_{i=1}^{3} (1+ \int_{I}  (\kappa_{\varphi}^{i})^{2}\varphi^{\circ} (\nu^{i})  ds) )^{2}  |_{t=t_{2}}} 
+\frac{1}{(\sum_{i=1}^{3} (1+ \int_{I}  (\kappa_{\varphi}^{i})^{2}\varphi^{\circ} (\nu^{i})  ds ))^{2} |_{t=t_{1}}}  \leq C (t_{2}-t_{1}).
\end{align*}
In particular if, for $ 0<T <\infty$, there exists a sequence of times $t_{j} \to T$, for $j \to \infty$,  such that
\begin{align}\label{blowupk}
\left(\sum_{i=1}^{3}\int_{I}(\kappa_{\varphi}^{i})^{2}\varphi^{\circ} (\nu^{i})  ds \right) \Big| _{t=t_{j}} \to \infty \qquad \text{as } j\to \infty
\end{align}
then we obtain for any $ t \in [0,T)$ that
\begin{align*} 
\frac{1}{(\sum_{i=1}^{3} (1+ \int_{I}  (\kappa_{\varphi}^{i})^{2}\varphi^{\circ} (\nu^{i})  ds ))^{2}}  \leq C (T-t).
\end{align*}
Therefore we can conclude with the following statement, that is valid for a solution to the geometric problem posed in Section~\ref{sec:geoprob}:
\begin{lemma} \label{lem5.2}
If for $ 0<T <\infty$,  the lengths of the curves are uniformly bounded from below 
\begin{align*}
L(u^{i}(t)) \geq \delta >0, \qquad i=1,2,3, \text{ for any } t \in [0,T)
\end{align*}
and there exists a sequence of times $t_{j} \to T$, for $j \to \infty$,  such that \eqref{blowupk} holds, then there exists a positive constant $C$ such that
$$ \sum_{i=1}^{3}  \int_{I}  (\kappa_{\varphi}^{i})^{2}\varphi^{\circ} (\nu^{i}) ds  \geq \frac{C}{\sqrt{T-t}}   
\qquad \text{for any $t\in [0,T)$,}$$
where the constant $C>0$ depends on $\delta$ and $\varphi^{\circ}$  (namely $m$, $M$ (recall \eqref{boundmpiccolo}), $a_{0}$ (recall \eqref{a0}),  $ C^{-1} \leq \varphi^{\circ} (\nu)\leq C$ and  $|D\varphi^{\circ} (\nu)|\leq C$). 
 In particular, taking $t=0$ we have
 $$
 \sqrt{T} \ge \frac{C}{\sum_{i=1}^{3}  \| \kappa_{\varphi}^{i}(0, \cdot)\|_{L^{2}(I)}^{2} }\,.
 $$ 
\end{lemma}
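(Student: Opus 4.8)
The plan is to carry out the anisotropic version of the integral estimate already prepared in the discussion above, and then close it into an ordinary differential inequality. First I would apply the structure Lemma~\ref{structurelemma} to each curve with the normal field $S=\psi(\theta^i)\kappa^i\nu^i$, i.e. the normal component of the velocity $u^i_t$. This choice is dictated by two facts: in $S_t-\psi(\theta)S_{ss}$ all the top-order terms cancel (so \eqref{structure} reduces to \eqref{equaz31}), and at the boundary $(S\cdot S_s)\tfrac{\psi(\theta)}{\varphi^\circ(\nu)}$ can be rewritten, using the evolution law $\theta_t=(\psi(\theta)\kappa)_s+\lambda\kappa$, as $\big(u_t\cdot\tfrac{\psi(\theta)\theta_t}{\varphi^\circ(\nu)}\nu\big)-\lambda\psi(\theta)\kappa^2\tfrac{\psi(\theta)}{\varphi^\circ(\nu)}$. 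Summing \eqref{equaz31} over $i=1,2,3$, the contributions at the fixed endpoints vanish (there $u^i_t=0$), and at the triple junction the leading term $\sum_i\big(u^i_t\cdot\tfrac{\psi(\theta^i)\theta^i_t}{\varphi^\circ(\nu^i)}\nu^i\big)$ vanishes as well: differentiating the Herring condition \eqref{HC} in time and rotating by $\pi/2$ gives $\sum_i\tfrac{\psi(\theta^i)\theta^i_t}{\varphi^\circ(\nu^i)}\nu^i=0$, while $u^1_t=u^2_t=u^3_t$ there. One is thus left with \eqref{erstestufe}.

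Next I would estimate. Set $g_i(t)=\int_I(\kappa^i_\varphi)^2\varphi^\circ(\nu^i)\,ds=\int_I(\psi(\theta^i)\kappa^i)^2/\varphi^\circ(\nu^i)\,ds$ (using $\kappa_\varphi=\tfrac{\psi(\theta)}{\varphi^\circ(\nu)}\kappa$) and $g=\sum_ig_i$. The two-sided bounds $m\le\psi\le M$ from \eqref{boundmpiccolo}, $C^{-1}\le\varphi^\circ(\nu)\le C$ and $|D\varphi^\circ(\nu)|\le C$ convert \eqref{erstestufe} into an inequality whose right-hand side is a sum of $\int_I(\psi(\theta^i)\kappa^i)^4\,ds$, the junction term $|\lambda^i|(\psi(\theta^i)\kappa^i)^2|_{x=0}$ (with $|\lambda^i|$ controlled by \eqref{boundlbdry}) and a good negative term $-\tfrac{m}{2}\sum_i\int_I|(\psi(\theta^i)\kappa^i)_s|^2/\varphi^\circ(\nu^i)\,ds$. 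Here the uniform lower bound $L(u^i)\ge\delta$ enters: the interpolation inequalities of Proposition~\ref{IEst} bound $\|\psi(\theta^i)\kappa^i\|_{L^4}^4$ and $\|\psi(\theta^i)\kappa^i\|_{L^\infty}$ in terms of $\|(\psi(\theta^i)\kappa^i)_s\|_{L^2}$ and $\|\psi(\theta^i)\kappa^i\|_{L^2}$, and after absorbing the $\|(\psi(\theta^i)\kappa^i)_s\|_{L^2}$ factors into the good term by Young's inequality one arrives at $g'(t)\le C\,(1+g(t))^3$ with $C=C(\delta,\varphi^\circ)$. Integrating, since $\tfrac{d}{dt}\big(-(1+g)^{-2}\big)=2(1+g)^{-3}g'\le 2C$, for $0\le t<t'<T$ one gets $(1+g(t))^{-2}-(1+g(t'))^{-2}\le 2C(t'-t)$; choosing $t'=t_j$ and letting $j\to\infty$, the hypothesis $g(t_j)\to\infty$ (which is \eqref{blowupk}) forces $(1+g(t))^{-2}\le 2C(T-t)$, hence $g(t)\ge c/\sqrt{T-t}$ after adjusting the constant (for $t$ away from $T$ one also uses that $g$ is continuous and positive there — it cannot vanish, since otherwise the evolving network would be a stationary critical point of the anisotropic length and $g$ would stay zero, contradicting the blow-up). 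Evaluating at $t=0$ and using $g(0)\le C\sum_i\|\kappa^i_\varphi(0,\cdot)\|_{L^2(I)}^2$ yields the stated bound on $\sqrt T$.

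The step I expect to be the main obstacle is the boundary term at the triple junction: isolating the cancellation $\sum_i\tfrac{\psi(\theta^i)\theta^i_t}{\varphi^\circ(\nu^i)}\nu^i=0$ and then controlling the residual $\lambda^i(\psi(\theta^i)\kappa^i)^2$ by \eqref{boundlbdry}, \eqref{a0} and interpolation. Beyond this, the genuinely anisotropic features — the non-constant weight $1/\varphi^\circ(\nu)$ and the extra bulk term $\int_I\tfrac{D\varphi^\circ(\nu)\cdot\tau}{(\varphi^\circ(\nu))^2}(\psi(\theta)\kappa)^2(\psi(\theta)\kappa)_s\,ds$ appearing in \eqref{equaz31}, which are absent in the isotropic case of \cite{MNT} — do not create new difficulties: that bulk term is quadratic in $\psi(\theta)\kappa$ times one $s$-derivative, so a weighted Young inequality absorbs it exactly like the other cubic remainders.
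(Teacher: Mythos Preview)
Your proposal is correct and follows essentially the same route as the paper: the same choice $S=\psi(\theta^i)\kappa^i\nu^i$ in Lemma~\ref{structurelemma}, the same boundary cancellation via time-differentiating \eqref{HC} and rotating by $\pi/2$, the same interpolation/Young treatment of the bulk and junction remainders to reach the ODI $g'\le C(1+g)^3$, and the same integration against the blow-up sequence $t_j\to T$. The only addition is your parenthetical remark on why the inequality extends to all $t\in[0,T)$ (not just $t$ near $T$), which the paper leaves implicit.
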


\begin{rem}\label{vameglio?}
Upon recalling that $\kappa_{\varphi}=\frac{\psi(\theta)}{\varphi^{\circ}(\nu)} \kappa$, observe that \eqref{erstestufe} can also be written as
\allowdisplaybreaks{
\begin{align*}
\sum_{i=1}^{3}  \frac{d}{dt} &\left( \frac{1}{2} \int_{I}  |\kappa_{\varphi}^{i}|^{2}\varphi^{\circ} (\nu^{i})  ds\right) + \sum_{i=1}^{3} \int_{I} |(\psi(\theta^{i})\kappa^{i})_{s}|^{2} \frac{\psi (\theta^{i})}{\varphi^{\circ}(\nu^{i})} ds  
= \sum_{i=1}^{3}  \frac{1}{2} \lambda^{i} 
(\kappa_{\varphi}^{i})^{2} \varphi^{\circ}(\nu^{i}) \Big|_{x=0}  \\
&\quad +\sum_{i=1}^{3}
\left( \int_{I}  \frac{1}{2} \frac{(\psi(\theta^{i})\kappa^{i})^{3} \kappa^{i} }{\varphi^{\circ}(\nu^{i})}  ds -\frac{1}{2}\int_{I} D \varphi^{\circ} (\nu^{i}) \cdot \tau^{i} (\kappa_{\varphi}^{i})^{2} (\psi(\theta^{i})\kappa^{i})_{s} ds \right)\\
&= \sum_{i=1}^{3}  \frac{1}{2} \lambda^{i} 
(\kappa_{\varphi}^{i})^{2} \varphi^{\circ}(\nu^{i}) \Big|_{x=0}  - \sum_{i=1}^{3}\int_{I} D \varphi^{\circ} (\nu^{i}) \cdot \tau^{i} (\kappa_{\varphi}^{i})^{2} (\psi(\theta^{i})\kappa^{i})_{s} ds \\
& \quad + \sum_{i=1}^{3}
\left( \int_{I}  \frac{1}{2} \frac{(\psi(\theta^{i})\kappa^{i})^{3} \kappa^{i} }{\varphi^{\circ}(\nu^{i})}  ds +\frac{1}{2}\int_{I} D \varphi^{\circ} (\nu^{i}) \cdot \tau^{i} (\kappa_{\varphi}^{i})^{2} (\psi(\theta^{i})\kappa^{i})_{s} ds \right)\\
&= \sum_{i=1}^{3}  \frac{1}{2} \left(  \lambda^{i} 
(\kappa_{\varphi}^{i})^{2} - D \varphi^{\circ} (\nu^{i}) \cdot \tau^{i} (\kappa_{\varphi}^{i})^{3} \right) \varphi^{\circ}(\nu^{i}) \Big|_{x=0}  \\
&\quad - \sum_{i=1}^{3}\int_{I} D \varphi^{\circ} (\nu^{i}) \cdot \tau^{i} (\kappa_{\varphi}^{i})^{2} (\psi(\theta^{i})\kappa^{i})_{s} ds \\
&\quad +\sum_{i=1}^{3} \Big( \int_{I}  \frac{1}{2} \frac{(\psi(\theta^{i})\kappa^{i})^{3} \kappa^{i} }{\varphi^{\circ}(\nu^{i})}  ds + \int_{I}  \frac{1}{2} (\kappa_{\varphi}^{i})^{4} \varphi^{\circ}(\nu^{i}) ds \\
& \quad - \int_{I} D \varphi^{\circ} (\nu^{i}) \cdot \tau^{i} (\kappa_{\varphi}^{i})^{2} (\kappa_{\varphi}^{i})_{s}  \varphi^{\circ}(\nu^{i}) ds -\frac{1}{2}\int_{I} D \varphi^{\circ} (\nu^{i}) \cdot \nu^{i} \kappa^{i} (\kappa_{\varphi}^{i})^{2} \psi(\theta^{i})\kappa^{i} ds
\Big)\\ %
&= \sum_{i=1}^{3}  \frac{1}{2} \left(  \lambda^{i} 
(\kappa_{\varphi}^{i})^{2} - D \varphi^{\circ} (\nu^{i}) \cdot \tau^{i} (\kappa_{\varphi}^{i})^{3} \right) \varphi^{\circ}(\nu^{i}) \Big|_{x=0}  \\
&\quad - \sum_{i=1}^{3}\int_{I} D \varphi^{\circ} (\nu^{i}) \cdot \tau^{i} (\kappa_{\varphi}^{i})^{2} (\psi(\theta^{i})\kappa^{i})_{s} ds \\%
&\quad +\sum_{i=1}^{3} \Big( \int_{I}  \frac{1}{2} (\kappa_{\varphi}^{i})^{4} \varphi^{\circ}(\nu^{i}) ds 
 - \int_{I} D \varphi^{\circ} (\nu^{i}) \cdot \tau^{i} (\kappa_{\varphi}^{i})^{2} (\kappa_{\varphi}^{i})_{s}  \varphi^{\circ}(\nu^{i}) ds 
\Big).
\end{align*}   }
where in the integration by parts we have used the fact that the velocities and hence the curvatures vanish at the fixed boundary points.
On the other hand note that 
\begin{align*}
(\kappa_{\varphi})_{s}= \frac{(\psi(\theta) \kappa)_{s}}{\varphi^{\circ}(\nu)} + \psi(\theta)\kappa \left( \frac{1}{\varphi^{\circ} (\nu)}\right)_{s} = \frac{(\psi(\theta) \kappa)_{s}}{\varphi^{\circ}(\nu)}  + \kappa_{\varphi}\kappa \frac{D\varphi^{\circ} (\nu) \cdot \tau}{\varphi^{\circ}(\nu)}
\end{align*}
and therefore
\begin{align*}
\frac{1}{2} &\int_{I} |(\psi(\theta^{i})\kappa^{i})_{s}|^{2} \frac{\psi (\theta^{i})}{\varphi^{\circ}(\nu^{i})} ds  \\
& = \frac{1}{2} \int_{I} \psi(\theta^{i}) |(\kappa^{i}_{\varphi})_{s}|^{2} \varphi^{\circ} (\nu^{i}) ds 
+\frac{1}{2} \int_{I} \left(\kappa^{i}_{\varphi}\kappa^{i} \frac{D\varphi^{\circ} (\nu^{i}) \cdot \tau^{i}}{\varphi^{\circ}(\nu^{i})}
\right)^{2}  \psi(\theta^{i}) \varphi^{\circ} (\nu^{i})ds\\
&\quad-\int_{I} (\kappa^{i}_{\varphi})_{s}(\kappa^{i}_{\varphi})^{2} (D\varphi^{\circ} (\nu^{i}) \cdot \tau^{i} ) \varphi^{\circ} (\nu^{i}) ds.
\end{align*}
It follows then
\begin{align*}
\sum_{i=1}^{3}   \Big \{ \frac{d}{dt} &\left( \frac{1}{2} \int_{I}  |\kappa_{\varphi}^{i}|^{2}\varphi^{\circ} (\nu^{i})  ds\right) + \frac{1}{2}\int_{I} |(\psi(\theta^{i})\kappa^{i})_{s}|^{2} \frac{\psi (\theta^{i})}{\varphi^{\circ}(\nu^{i})} ds  
+\frac{1}{2} \int_{I} \psi(\theta^{i}) |(\kappa^{i}_{\varphi})_{s}|^{2} \varphi^{\circ} (\nu^{i}) ds \\
&+\frac{1}{2} \int_{I} \left(\kappa^{i}_{\varphi}\kappa^{i} \frac{D\varphi^{\circ} (\nu^{i}) \cdot \tau^{i}}{\varphi^{\circ}(\nu^{i})}
\right)^{2}  \psi(\theta^{i}) \varphi^{\circ} (\nu^{i})ds  \Big \} \\
&=
\sum_{i=1}^{3}  \frac{1}{2} \left(  \lambda^{i} 
(\kappa_{\varphi}^{i})^{2} - D \varphi^{\circ} (\nu^{i}) \cdot \tau^{i} (\kappa_{\varphi}^{i})^{3} \right) \varphi^{\circ}(\nu^{i}) \Big|_{x=0}  \\
&\quad - \sum_{i=1}^{3}\int_{I} D \varphi^{\circ} (\nu^{i}) \cdot \tau^{i} (\kappa_{\varphi}^{i})^{2} (\psi(\theta^{i})\kappa^{i})_{s} ds  +\sum_{i=1}^{3}  \int_{I}  \frac{1}{2} (\kappa_{\varphi}^{i})^{4} \varphi^{\circ}(\nu^{i}) ds .
\end{align*}
For the second last integral on  the right-hand side note that 
\begin{align*}
\Big|\int_{I} &D \varphi^{\circ} (\nu^{i}) \cdot \tau^{i} (\kappa_{\varphi}^{i})^{2} (\psi(\theta^{i})\kappa^{i})_{s} ds \Big|  = 
\Big|  \int_{I}  D \varphi^{\circ} (\nu^{i}) \cdot \tau^{i} \kappa_{\varphi}^{i} \kappa^{i} \sqrt{\frac{\psi}{\varphi^{\circ}}} \sqrt{\frac{\psi}{\varphi^{\circ}}} (\psi(\theta^{i})\kappa^{i})_{s} ds \Big|\\
& \leq \frac{1}{2}\int_{I} |(\psi(\theta^{i})\kappa^{i})_{s}|^{2} \frac{\psi (\theta^{i})}{\varphi^{\circ}(\nu^{i})} ds  +
\frac{1}{2} \int_{I} \left(\kappa^{i}_{\varphi}\kappa^{i} \frac{D\varphi^{\circ} (\nu^{i}) \cdot \tau^{i}}{\varphi^{\circ}(\nu^{i})}
\right)^{2}  \psi(\theta^{i}) \varphi^{\circ} (\nu^{i})ds 
\end{align*}
and so it can be nicely absorbed. It follows then 
\begin{align*}
\sum_{i=1}^{3}   \frac{d}{dt} &\left( \frac{1}{2} \int_{I}  |\kappa_{\varphi}^{i}|^{2}\varphi^{\circ} (\nu^{i})  ds\right)  
+ \sum_{i=1}^{3} \frac{1}{2} \int_{I} \psi(\theta^{i}) |(\kappa^{i}_{\varphi})_{s}|^{2} \varphi^{\circ} (\nu^{i}) ds  \\
&\leq
\sum_{i=1}^{3}  \frac{1}{2} \left(  \lambda^{i} 
(\kappa_{\varphi}^{i})^{2} - D \varphi^{\circ} (\nu^{i}) \cdot \tau^{i} (\kappa_{\varphi}^{i})^{3} \right) \varphi^{\circ}(\nu^{i}) \Big|_{x=0}    +\sum_{i=1}^{3}  \int_{I}  \frac{1}{2} (\kappa_{\varphi}^{i})^{4} \varphi^{\circ}(\nu^{i}) ds .
\end{align*}
\end{rem}

\subsection{Estimates on $\|(\psi\kappa)_{ss}\|_{L^{2}}$ and $\| (\kappa_{\varphi})_{ss}\|_{L^{2}}$}

Similarly to \cite{MNT} (where the isotropic setting is considered) we now introduce some notation that  simplifies exposition and reading.
We indicate by $p_{\sigma}(\partial_{s}^{h} (\psi \kappa))$ a polynomial in the variables $(\psi(\theta) \kappa), \ldots,\partial_{s}^{h} (\psi(\theta) \kappa) $ with coefficient functions $C=C(\frac{1}{\psi}, \psi, \ldots, \partial_{\theta}^{h+1}\psi)$ that depend on $\frac{1}{\psi}, \psi, \ldots, \partial_{\theta}^{h+1}\psi$ 
and such that every monomial is of the form
\begin{align*}
C(\frac{1}{\psi}, \psi, \ldots, \partial_{\theta}^{h+1}\psi) \prod_{l=0}^{h} (\partial_{s}^{l} (\psi\kappa))^{\beta_{l}}
\qquad \text{ with } \sum_{l=0}^{h}(l+1)\beta_{l}=\sigma.
\end{align*}
Note that, due to the smoothness assumptions on the anisotropy map $\varphi^{\circ}$ we will be able to bound uniformly from above all  the coefficient maps $C(\frac{1}{\psi}, \psi, \ldots, \partial_{\theta}^{h+1}\psi) $, that is
$$ |C(\frac{1}{\psi}, \psi, \ldots, \partial_{\theta}^{h+1}\psi) | \leq C_{h}, \text{ for any } h \in \mathbb{N}_{0}. $$
 For this reason we treat these maps  as coefficients and refer to them as such.
More precisely the maps $C(\frac{1}{\psi}, \psi, \ldots, \partial_{\theta}^{h}\psi)$ are assumed to be sums of rational functions  of type
$$ \frac{\text{polynomial with constant coefficients in the  variables  $\psi(\theta), \ldots, \partial_{\theta}^{h}\psi(\theta) $ } }{\psi^{r}(\theta)}$$
for some $r \in \mathbb{N}_{0}$ and, as a consequence,  the following rule applies
\begin{align}\label{rule}
\partial_{s} \left( C(\frac{1}{\psi}, \psi, \ldots, \partial_{\theta}^{h}\psi) \right) = C(\frac{1}{\psi}, \psi, \ldots, \partial_{\theta}^{h+1}\psi) (\psi \kappa)
\end{align}
which is obtained by derivating the expression and  using $\theta_{s}=\kappa= \frac{1}{\psi} (\psi \kappa)$.

Similarly we indicate by 
$p_{\sigma}(|\partial_{s}^{h} (\psi \kappa)|)$ a polynomial in  the variables $|\psi(\theta) \kappa)|, \ldots,|\partial_{s}^{h} (\psi(\theta) \kappa)| $, with \emph{constants} coefficients  such that each monomial is of the form
\begin{align*}
C \prod_{l=0}^{h} |\partial_{s}^{l} (\psi\kappa)|^{\beta_{l}}
\qquad \text{ with } \sum_{l=0}^{h}(l+1)\beta_{l}=\sigma.
\end{align*}
We denote with $q_{\sigma}(\partial_{t}^{j} \lambda, \partial_{s}^{h}(\psi \kappa))$ a polynomial as before in $\lambda, \ldots, \partial_{t}^{j} \lambda$ and $(\psi(\theta) \kappa), \ldots,\partial_{s}^{h} (\psi(\theta) \kappa) $ with coefficient functions $C=C(\frac{1}{\psi}, \psi, \ldots, \partial_{\theta}^{h+1}\psi)$  such that all its monomial are of the form
\begin{align*}
C(\frac{1}{\psi}, \psi, \ldots, \partial_{\theta}^{h+1}\psi) \prod_{l=0}^{j} (\partial_{t}^{l} \lambda)^{\alpha_{l}} \prod_{l=0}^{h} (\partial_{s}^{l} (\psi\kappa))^{\beta_{l}}
\qquad \text{ with }\sum_{l=0}^{j}(2l+1)\alpha_{l}+ \sum_{l=0}^{h}(l+1)\beta_{l}=\sigma.
\end{align*}

We exemplify the notation just introduced in the next lemma (which is partially the anisotropic counterpart of \cite[Lemma 3.7]{MNT} and)  which will be used subsequently.
\begin{lemma}
\label{lem3.7}
 For $j=0,1,2$ we have that
\begin{align*}
\partial_{t} \partial_{s}^{j} (\psi \kappa)= \psi \partial_{s}^{j+2}(\psi \kappa) + \lambda \partial_{s}^{j+1} (\psi \kappa)
+ C(\frac{1}{\psi}, \psi') (\psi \kappa) \partial_{s}^{j+1}(\psi \kappa) 
+ p_{j+3} (\partial_{s}^{j } (\psi \kappa)).
\end{align*}
In particular it follows that
\begin{align*}
\theta_{tt}&= [(\psi \kappa)_{s} + \lambda \kappa]_{t} = \partial_{t} \partial_{s}(\psi \kappa) + (\lambda \kappa)_{t}\\
&= (\lambda \kappa)_{t} +
\psi (\psi \kappa)_{sss} + \lambda  (\psi \kappa)_{ss}
+ C(\frac{1}{\psi}, \psi') (\psi \kappa) (\psi \kappa)_{ss} 
+ p_{4} (\partial_{s} (\psi \kappa))\\
&= \psi (\psi \kappa)_{sss} + q_{4}(\lambda_{t}, \partial_{s}^{2} (\psi \kappa)).
\end{align*}
\end{lemma}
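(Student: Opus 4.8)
The plan is to argue by induction on $j$, starting from the evolution identities already collected in Lemma~\ref{lemma2.1}: the commutator rule $\partial_t\partial_s(\cdot) = \partial_s\partial_t(\cdot) + \psi(\theta)\kappa^2\,\partial_s(\cdot) - \lambda_s\,\partial_s(\cdot)$, together with $\theta_t = (\psi(\theta)\kappa)_s + \lambda\kappa$ and $\kappa_t = (\psi(\theta)\kappa)_{ss} + \psi(\theta)\kappa^3 + \lambda\kappa_s$. Throughout I abbreviate $v:=\psi(\theta)\kappa$ and use the two elementary identities $\kappa = v/\psi$ and $\psi\kappa_s = v_s - \psi'\kappa^2$ (the latter obtained by differentiating $v=\psi\kappa$ in $s$ and using $\theta_s=\kappa$). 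These let me eliminate every bare occurrence of $\kappa$ and $\kappa_s$ in favour of $v$, $v_s$ and coefficient functions of $\psi$ and its $\theta$-derivatives, so that the bookkeeping with the polynomial classes $p_\sigma$ and $q_\sigma$ goes through, rule~\eqref{rule} taking care of $s$-derivatives of coefficients.

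For the base case $j=0$ I would just compute $\partial_t v = \psi'(\theta)\theta_t\kappa + \psi(\theta)\kappa_t$, insert the expressions for $\theta_t$ and $\kappa_t$, and use $\psi\lambda\kappa_s = \lambda v_s - \lambda\psi'\kappa^2$ to cancel the two terms containing $\psi'\kappa^2$; what is left is $\partial_t v = \psi v_{ss} + \lambda v_s + \tfrac{\psi'}{\psi}v\,v_s + \tfrac1\psi v^3$, which is exactly the asserted form with leading coefficient $C(\tfrac1\psi,\psi')$ and $p_3(v)=\tfrac1\psi v^3$.

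For the inductive step $j\to j+1$ (needed for $j=0,1$) I apply the commutator rule to $\partial_s^j v$:
\[
\partial_t\partial_s^{j+1}v = \partial_s\bigl(\partial_t\partial_s^j v\bigr) + \psi(\theta)\kappa^2\,\partial_s^{j+1}v - \lambda_s\,\partial_s^{j+1}v,
\]
substitute the inductive hypothesis for $\partial_t\partial_s^j v$, and differentiate in $s$. The only term of the hypothesis producing a $\lambda_s$ is $\partial_s(\lambda\,\partial_s^{j+1}v) = \lambda_s\,\partial_s^{j+1}v + \lambda\,\partial_s^{j+2}v$, whose $\lambda_s$-contribution cancels exactly against the $-\lambda_s\,\partial_s^{j+1}v$ coming from the commutator. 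It then remains to check that the surviving terms land in the right classes: $\partial_s\psi = \psi'\kappa = \tfrac{\psi'}{\psi}v$ keeps the leading coefficient of the form $C(\tfrac1\psi,\psi')$ in front of $\partial_s^{j+2}v$; the terms $\psi\kappa^2 = \tfrac1\psi v^2$ and $C\,v_s\,\partial_s^{j+1}v$ have $s$-weight $j+4$ and hence belong to $p_{j+4}(\partial_s^{j+1}v)$; and $s$-derivatives of coefficient functions are handled by rule~\eqref{rule}, which raises both the $\theta$-order and the polynomial weight by one. This produces the formula for $j=1$ and, applied once more, for $j=2$.

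Finally, for the $\theta_{tt}$ identity I differentiate $\theta_t = (\psi\kappa)_s + \lambda\kappa$ in time to get $\theta_{tt} = \partial_t\partial_s(\psi\kappa) + (\lambda\kappa)_t$ and insert the $j=1$ case just proved, which gives the displayed intermediate equality with the term $(\lambda\kappa)_t$ still exposed. Writing $(\lambda\kappa)_t = \lambda_t\kappa + \lambda\kappa_t$ and expanding $\kappa = v/\psi$, $\kappa_t = (\psi\kappa)_{ss} + \psi\kappa^3 + \lambda\kappa_s$ and $\kappa_s = \tfrac1\psi v_s - \tfrac{\psi'}{\psi^3}v^2$, every term other than the leading $\psi(\psi\kappa)_{sss}$ becomes a monomial in $\lambda_t$ and in $v,\dots,\partial_s^2(\psi\kappa)$ of total weight $4$ (for instance $\tfrac{\lambda_t}\psi v$ has weight $3+1$, $\tfrac{\lambda^2}\psi v_s$ has weight $2+2$, $\lambda\,\partial_s^2(\psi\kappa)$ has weight $1+3$), hence is absorbed into $q_4(\lambda_t,\partial_s^2(\psi\kappa))$. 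The main obstacle, as indicated, is precisely the $\lambda_s$-cancellation in the inductive step together with the careful weight accounting confirming that no term escapes the declared polynomial classes; the remaining computations are routine.
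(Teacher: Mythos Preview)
Your proof is correct and follows essentially the same approach as the paper. The paper computes the $j=0$ case exactly as you do (arriving at the identity $\partial_t(\psi\kappa)=\psi(\psi\kappa)_{ss}+\lambda(\psi\kappa)_s+\tfrac{\psi'}{\psi}(\psi\kappa)(\psi\kappa)_s+\tfrac{1}{\psi}(\psi\kappa)^3$), then obtains $j=1$ by applying the same commutator $\partial_t\partial_s=\partial_s\partial_t+\psi\kappa^2\partial_s-\lambda_s\partial_s$ and writing out every term explicitly, and finally says ``the case $j=2$ is computed analogously''; your inductive packaging, with the $\lambda_s$-cancellation isolated as the key observation, is a cleaner presentation of the same computation.
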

\begin{proof}
Using Lemma~\ref{lemma2.1} and \eqref{boundmpiccolo} we obtain (we write $\psi'$ for $\partial_{\theta} \psi$ and write $\psi$ instead of $\psi(\theta)$ to simplify the notation)
\begin{align}\label{oil}
\partial_{t} (\psi \kappa)&= \psi_{t} \kappa + \psi \kappa_{t}
=\psi' ((\psi \kappa)_{s} +\lambda \kappa) \kappa + \psi [ (\psi\kappa)_{ss} + \psi \kappa^{3} +\lambda \kappa_{s}] \notag \\
& = \psi (\psi \kappa)_{ss} +  \lambda (\psi \kappa)_{s} + \frac{\psi'}{\psi} (\psi \kappa)_{s} (\psi \kappa) + \frac{1}{\psi} (\psi \kappa)^{3} \\
& =\psi (\psi \kappa)_{ss} +  \lambda (\psi \kappa)_{s}  + C(\frac{1}{\psi}, \psi') (\psi \kappa)_{s} (\psi \kappa) + C (\frac{1}{\psi}
) (\psi \kappa)^{3} \notag
\end{align}
and the claim follows for $j=0$.
Next, using again Lemma~\ref{lemma2.1}, the previous step and \eqref{rule}, we compute
\begin{align*}
\partial_{t} (\psi \kappa)_{s} & = \partial_{t} \partial_{s}  (\psi \kappa)
= \partial_{s} \partial_{t} (\psi \kappa) + \psi \kappa^{2} (\psi \kappa)_{s} -\lambda_{s} (\psi \kappa)_{s} \\
& = \psi' \kappa (\psi \kappa)_{ss} + \psi (\psi \kappa)_{sss} + \lambda_{s} (\psi \kappa)_{s} + \lambda (\psi \kappa)_{ss} +\\
& \quad +\left( \frac{\psi''}{\psi} - \frac{(\psi')^{2} }{ \psi^{2}}\right)\frac{1}{\psi} (\psi\kappa) (\psi \kappa)_{s} (\psi \kappa) 
+ \frac{\psi'}{\psi}  ( (\psi \kappa)_{ss} (\psi \kappa) + (\psi \kappa)_{s} (\psi \kappa)_{s})\\
& \quad -\frac{\psi'}{\psi^{3}} (\psi \kappa) (\psi \kappa)^{3}  + \frac{3}{\psi}(\psi \kappa)^{2} (\psi \kappa)_{s}
+ \frac{1}{\psi}(\psi \kappa)^{2} (\psi \kappa)_{s} -\lambda_{s} (\psi \kappa)_{s} \\
& = \psi (\psi \kappa)_{sss}  + \lambda (\psi \kappa)_{ss} + C(\frac{1}{\psi}, \psi') (\psi \kappa)_{ss} (\psi \kappa) +
C(\frac{1}{\psi},\psi,  \psi', \psi'') (\psi\kappa)^{2} (\psi \kappa)_{s} \\
& \quad + C(\frac{1}{\psi}, \psi') ((\psi \kappa)_{s})^{2}
+ C(\frac{1}{\psi}, \psi') (\psi \kappa)^{4}.
\end{align*}
The case $j=2$ is computed analogously. 
The last statement follows by the definition of the polynomial $q_{4}(\lambda_{t}, \partial_{s}^{2} (\psi \kappa))$ and the fact that by Lemma~\ref{lemma2.1} we can write
$$ k_{t}= (\psi \kappa)_{ss} +\frac{1}{\psi^{2}} (\psi \kappa)^{3} + \lambda \partial_{s} \left( \frac{1}{\psi} (\psi \kappa) \right)
=(\psi \kappa)_{ss} +\frac{1}{\psi^{2}} (\psi \kappa)^{3} + \frac{\lambda}{\psi} (\psi \kappa)_{s} - \lambda\frac{\psi'}{\psi^{3}} (\psi\kappa)^{2}.
$$
\end{proof}

Next we  apply Lemma~\ref{structurelemma} to 
$$ S:= \psi (\psi \kappa)_{ss} \nu$$
which is a term in the normal component of $u_{tt}$ (see \eqref{uttb} below). We have that
\begin{align*}
S&= \psi (\psi \kappa)_{ss} \nu,\\
 |S|^{2} &=  \psi^{2} |(\psi \kappa)_{ss}|^{2},\\
S_{s}&= (\psi' \kappa(\psi \kappa)_{ss} + \psi (\psi \kappa)_{sss})\nu - (\psi \kappa) (\psi\kappa)_{ss} \tau\\
 & =\left(\frac{\psi'}{\psi} (\psi \kappa)(\psi \kappa)_{ss} + \psi (\psi \kappa)_{sss} \right)\nu - (\psi \kappa) (\psi\kappa)_{ss} \tau,\\
 S\cdot S_{s} &=  \psi' (\psi \kappa) ((\psi \kappa)_{ss})^{2} +  \psi^{2} (\psi \kappa)_{ss}(\psi \kappa)_{sss} ,\\
|S_{s}|^{2} & =\left(\frac{\psi'}{\psi} \right)^{2} |(\psi \kappa) (\psi \kappa)_{ss}|^{2} + \psi^{2} |(\psi \kappa)_{sss}|^{2} +
|(\psi \kappa) (\psi\kappa)_{ss}|^{2} 
+ 2\psi' (\psi \kappa)(\psi \kappa)_{ss}(\psi \kappa)_{sss}\\
&=  \psi^{2} |(\psi \kappa)_{sss}|^{2} + 2\psi' (\psi \kappa)(\psi \kappa)_{ss}(\psi \kappa)_{sss} +p_{8} (\partial_{s}^{2} (\psi \kappa)),\\
S_{ss}&= (\psi (\psi \kappa)_{ssss}+ 2\frac{\psi'}{\psi} (\psi \kappa) (\psi \kappa)_{sss}+ p_{5} (\partial_{s}^{2} (\psi \kappa)) )\nu + (\ldots) \tau.
\end{align*}
Moreover using Lemma~\ref{lem3.7} with $j=2$, and Lemma~\ref{lemma2.1} we can write
\begin{align*}
S_{t}&=[\psi_{t} (\psi \kappa)_{ss } +  \psi\partial_{t} \partial_{s}^{2}(\psi \kappa)] \nu + (\ldots) \tau\\
&=[\psi' ( (\psi\kappa)_{s}+ \lambda \kappa) (\psi \kappa)_{ss }  + \psi \big( \psi \partial_{s}^{4}(\psi \kappa) + \lambda \partial_{s}^{3} (\psi \kappa)\\
& \qquad + C(\frac{1}{\psi}, \psi') (\psi \kappa) \partial_{s}^{3}(\psi \kappa) 
 +p_{5} (\partial_{s}^{2} (\psi \kappa))  \, \big)] \nu + (\ldots) \tau\\
 & = \{ \psi^{2} (\psi\kappa)_{ssss} +\lambda [\psi' \kappa (\psi \kappa)_{ss} + \psi\partial_{s}^{3}(\psi \kappa) ] \\
 & \qquad +C(\frac{1}{\psi}, \psi') (\psi \kappa) \partial_{s}^{3}(\psi \kappa) 
 +p_{5} (\partial_{s}^{2} (\psi \kappa))    \}\nu + (\ldots) \tau.
\end{align*}
Therefore
\begin{align*}
(S_{t}-\psi S_{ss})^{\perp}&=\Big ( \lambda [\psi' \kappa (\psi \kappa)_{ss} + \psi \partial_{s}^{3}(\psi \kappa) ] +C(\frac{1}{\psi}, \psi') (\psi \kappa) \partial_{s}^{3}(\psi \kappa) \\
& \quad  +p_{5} (\partial_{s}^{2} (\psi \kappa))   -2 \psi' (\psi \kappa) (\psi \kappa)_{sss} \Big) \nu ,\\
 (S_{t}-\psi S_{ss}) \cdot S &= \lambda [\psi'  (\psi\kappa) ((\psi \kappa)_{ss})^{2} + \psi^{2} (\psi \kappa)_{ss}\partial_{s}^{3}(\psi \kappa) ]  \\
& \qquad  +C(\frac{1}{\psi},\psi, \psi') (\psi \kappa) (\psi \kappa)_{ss} \partial_{s}^{3}(\psi \kappa) 
 +p_{8} (\partial_{s}^{2} (\psi \kappa)),
\end{align*}
and we obtain
\begin{align*}
S \cdot (S_{t} - \psi S_{ss})& - \lambda (S \cdot S_{s}) - (S \cdot S_{s}) (\psi)_{s} -\frac{1}{2} |S|^{2} (\psi \kappa^{2})\\
&=C(\frac{1}{\psi},\psi, \psi') (\psi \kappa) (\psi \kappa)_{ss} \partial_{s}^{3}(\psi \kappa) 
 +p_{8} (\partial_{s}^{2} (\psi \kappa)),
\end{align*}
as well as
\begin{align*}
\frac{1}{2} |S|^{2} (\psi(\theta) \kappa)_{s}
 -\psi(\theta) \kappa (S \cdot S_{s}) = C(\frac{1}{\psi},\psi, \psi') (\psi \kappa) (\psi \kappa)_{ss} \partial_{s}^{3}(\psi \kappa) 
 +p_{8} (\partial_{s}^{2} (\psi \kappa)).
\end{align*}
Plugging the above expression into \eqref{structure} yields
\begin{align*}
\frac{d}{dt} &\left ( \frac{1}{2} \int_{I} |S|^{2} \frac{1}{\varphi^{\circ} (\nu)} ds \right) + \int_{I} \psi^{2} |(\psi \kappa)_{sss}|^{2}
\frac{\psi(\theta)}{\varphi^{\circ} (\nu)} ds = 
 \left[ (S \cdot S_{s}) \frac{\psi (\theta)}{\varphi^{\circ}(\nu) }+\frac{1}{2}|S|^{2}\frac{\lambda}{\varphi^{\circ}(\nu)}  \right]_{0}^{1}  \\
&+\int_{I} \left(C(\frac{1}{\psi},\psi, \psi') (\psi \kappa) (\psi \kappa)_{ss} \partial_{s}^{3}(\psi \kappa) 
 +p_{8} (\partial_{s}^{2} (\psi \kappa)) \right) \frac{1}{\varphi^{\circ} (\nu)}  ds \\ 
&  +\int_{I} \frac{D \varphi^{\circ} (\nu) \cdot \tau}{(\varphi^{\circ} (\nu))^{2}} \left(  C(\frac{1}{\psi},\psi, \psi') (\psi \kappa) (\psi \kappa)_{ss} \partial_{s}^{3}(\psi \kappa) 
 +p_{8} (\partial_{s}^{2} (\psi \kappa)) \right) ds .
\end{align*}
With help of Young inequality and using \eqref{boundmpiccolo} we achieve
\begin{align}\label{quasiquasi}
\frac{d}{dt} &\left ( \frac{1}{2} \int_{I} |S|^{2} \frac{1}{\varphi^{\circ} (\nu)} ds \right) + \frac{1}{2}\int_{I} \psi^{2} |(\psi \kappa)_{sss}|^{2}
\frac{\psi(\theta)}{\varphi^{\circ} (\nu)} ds \\
&\leq 
 \left[ (S \cdot S_{s}) \frac{\psi (\theta)}{\varphi^{\circ}(\nu) }+\frac{1}{2}|S|^{2}\frac{\lambda}{\varphi^{\circ}(\nu)}  \right]_{0}^{1}  
 + C \int_{I}  p_{8} (|\partial_{s}^{2} (\psi \kappa)|) ds.  \notag
\end{align}
To treat the boundary term 
\begin{align*}
&\left[ (S \cdot S_{s}) \frac{\psi (\theta)}{\varphi^{\circ}(\nu) }+\frac{1}{2}|S|^{2}\frac{\lambda}{\varphi^{\circ}(\nu)}  \right]_{0}^{1} \\
&= \left[  \frac{\psi}{\varphi^{\circ}(\nu)} \Big ( \psi' (\psi \kappa) ((\psi \kappa)_{ss})^{2} +  \psi^{2} (\psi \kappa)_{ss}(\psi \kappa)_{sss} \Big)  + \frac{\lambda}{2\varphi^{\circ}(\nu)} \psi^{2} |(\psi \kappa)_{ss}|^{2} \right]_{0}^{1} 
\end{align*}
it is imperative to be able to  lower the order of the term with three spacial derivatives.
Note that the $\lambda$-term is of type
\begin{align*}
\frac{\lambda}{2\varphi^{\circ}(\nu)} \psi^{2} |(\psi \kappa)_{ss}|^{2} = \frac{1}{\varphi^{\circ}(\nu)} q_{7}(\partial_{t} \lambda, \partial_{s}^{2} (\psi \kappa) ).
\end{align*}
To handle the term $(S \cdot S_{s})$ observe that by Lemma~\ref{lemma2.1} and \eqref{oil} we can write
\begin{align} \label{uttb}
u_{tt} & =[ \psi \kappa \nu + \lambda \tau]_{t} = (\psi \kappa)_{t} \nu +  (\psi \kappa) \nu_{t} + \lambda_{t} \tau + \lambda \tau_{t} \\ 
&=  [ (\psi \kappa)_{t} +\lambda \theta_{t}] \nu + (\lambda_{t} - (\psi \kappa) \theta_{t}) \tau \notag\\
&= [\psi (\psi \kappa)_{ss} +  \lambda (\psi \kappa)_{s} + \frac{\psi'}{\psi} (\psi \kappa)_{s} (\psi \kappa) + \frac{1}{\psi} (\psi \kappa)^{3} + \lambda \theta_{t}] \nu + (\lambda_{t} - (\psi \kappa) \theta_{t}) \tau \notag\\
& = S +  [\lambda (\psi \kappa)_{s} + \frac{\psi'}{\psi} (\psi \kappa)_{s} (\psi \kappa) + \frac{1}{\psi} (\psi \kappa)^{3} + \lambda \theta_{t}] \nu  + (\ldots) \tau. \notag
\end{align}
At the fixed boundary point (that is at $x=1$) we have that $(\psi\kappa)= \lambda=\lambda_{t}=(\psi\kappa)_{ss}=0$ since the here
 $u_{t}=u_{tt}=0$. Hence we need to treat only the boundary terms at the junction point.
Here we have, using \eqref{uttb},
\begin{align*}
(S \cdot S_{s}) & = S \cdot (S_{s})^{\perp} = (u_{tt} \cdot (S_{s})^{\perp})\\
& \quad  -
[\lambda (\psi \kappa)_{s} + \frac{\psi'}{\psi} (\psi \kappa)_{s} (\psi \kappa) + \frac{1}{\psi} (\psi \kappa)^{3} + \lambda\theta_{t}]  
(\psi' \kappa(\psi \kappa)_{ss} + \psi (\psi \kappa)_{sss})\\
& =(u_{tt} \cdot (S_{s})^{\perp}) -R.
\end{align*}
Concerning the term $R$ we observe
\begin{align*}
&[\lambda (\psi \kappa)_{s} + \frac{\psi'}{\psi} (\psi \kappa)_{s} (\psi \kappa) + \frac{1}{\psi} (\psi \kappa)^{3} + \lambda\theta_{t}]  
(\psi' \kappa(\psi \kappa)_{ss})\\
&=[2\lambda (\psi \kappa)_{s} + \frac{\psi'}{\psi} (\psi \kappa)_{s} (\psi \kappa) + \frac{1}{\psi} (\psi \kappa)^{3} + \lambda^{2} \frac{1}{\psi}(\psi \kappa) ]  
(\frac{\psi'}{\psi} ( \psi\kappa)(\psi \kappa)_{ss}) =  q_{7}(\partial_{t} \lambda, \partial_{s}^{2} (\psi \kappa) ).
\end{align*}
Using Lemma~\ref{lem3.7} we also compute
\begin{align*}
&[\lambda (\psi \kappa)_{s} + \frac{\psi'}{\psi} (\psi \kappa)_{s} (\psi \kappa) + \frac{1}{\psi} (\psi \kappa)^{3} + \lambda\theta_{t}]  
\psi (\psi \kappa)_{sss}\\
&=[( 2\lambda + \psi'\kappa )  (\psi \kappa)_{s}  + \frac{1}{\psi} (\psi \kappa)^{3} + \lambda^{2}  \kappa ] (\partial_{t} (\psi \kappa)_{s} \,  + q_{4} (\partial_{t} \lambda, \partial_{s}^{2} (\psi \kappa)))\\
&= ( 2\lambda + \psi'\kappa ) \left( \frac{|(\psi \kappa)_{s} |^{2}}{2} \right)_{t} + (\frac{1}{\psi} (\psi \kappa)^{3} + \lambda^{2}  \kappa ) (\partial_{t} (\psi \kappa)_{s})   +q_{7}(\partial_{t} \lambda, \partial_{s}^{2} (\psi \kappa) )\\
& = \left ( ( 2\lambda + \frac{\psi'}{\psi}(\psi\kappa) )  \frac{|(\psi \kappa)_{s} |^{2}}{2}  + (\frac{1}{\psi} (\psi \kappa)^{3} + \frac{1}{\psi}\lambda^{2} (\psi \kappa)  )(\psi \kappa)_{s} \right)_{t} \\
& \quad -  ( 2\lambda + \psi'\kappa )_{t}  \frac{|(\psi \kappa)_{s} |^{2}}{2} - (\psi \kappa)_{s} (\frac{1}{\psi} (\psi \kappa)^{3} + \lambda^{2}  \kappa)_{t} + q_{7}(\partial_{t} \lambda, \partial_{s}^{2} (\psi \kappa) )\\
&=\partial_{t} \big( q_{5}(\lambda, \partial_{s}(\psi \kappa)) \big)  +q_{7}(\partial_{t} \lambda, \partial_{s}^{2} (\psi \kappa) ).
\end{align*}
Hence
$$R= \partial_{t} \big( q_{5}(\lambda, \partial_{s}(\psi \kappa)) \big)  +q_{7}(\partial_{t} \lambda, \partial_{s}^{2} (\psi \kappa) ).$$
Therefore we obtain that at the junction point we have
\begin{align*}
\frac{\psi (\theta)}{\varphi^{\circ}(\nu)}(S \cdot S_{s})  &= \frac{\psi (\theta)}{\varphi^{\circ}(\nu)}  (u_{tt} \cdot (S_{s})^{\perp}) - \frac{\psi (\theta)}{\varphi^{\circ}(\nu)} \left(\partial_{t} \big( q_{5}(\lambda, \partial_{s}(\psi \kappa)) \big)  +q_{7}(\partial_{t} \lambda, \partial_{s}^{2} (\psi \kappa) ) \right) \\
&=\frac{\psi (\theta)}{\varphi^{\circ}(\nu)}  (u_{tt} \cdot (S_{s})^{\perp}) - \frac{1}{\varphi^{\circ}(\nu)} \left[\partial_{t} \big( q_{5}(\lambda, \partial_{s}(\psi \kappa)) \big)  +q_{7}(\partial_{t} \lambda, \partial_{s}^{2} (\psi \kappa) ) \right] .
\end{align*}
Next, using \eqref{uttb} and the expression derived above for $S_{s}$, we observe that 
\begin{align*}
\frac{\psi (\theta)}{\varphi^{\circ}(\nu)} & (u_{tt} \cdot (S_{s})^{\perp})=\\
&= \frac{\psi (\theta)}{\varphi^{\circ}(\nu)}[ (\psi \kappa)_{t} +\lambda \theta_{t}] (\frac{\psi'}{\psi} (\psi\kappa)(\psi \kappa)_{ss} + \psi (\psi \kappa)_{sss})\\
& =\frac{\psi (\theta)}{\varphi^{\circ}(\nu)}[ (\psi \kappa)_{t} +\lambda \theta_{t}]\psi (\psi \kappa)_{sss} + \frac{1}{\varphi^{\circ}(\nu)} q_{7}(\partial_{t} \lambda, \partial_{s}^{2} (\psi \kappa) )\\
& =\frac{\psi (\theta)}{\varphi^{\circ}(\nu)}[ (\psi \kappa)_{t} +\lambda \theta_{t}](\theta_{tt} + q_{4}(\partial_{t} \lambda, \partial_{s}^{2} (\psi \kappa) ))+ \frac{1}{\varphi^{\circ}(\nu)} q_{7}(\partial_{t} \lambda, \partial_{s}^{2} (\psi \kappa) )\\
&= \frac{1}{\varphi^{\circ}(\nu)}  q_{7}(\partial_{t} \lambda, \partial_{s}^{2} (\psi \kappa) )+ \frac{\psi (\theta)}{\varphi^{\circ}(\nu)}[ (\psi \kappa)_{t} +\lambda \theta_{t}]\theta_{tt}
\end{align*}
where we have used Lemma~\ref{lem3.7} in the second last equality.
Hence so far we have shown that 
\begin{align}\label{qqquasi}
&\left[ (S \cdot S_{s}) \frac{\psi (\theta)}{\varphi^{\circ}(\nu) }+\frac{1}{2}|S|^{2}\frac{\lambda}{\varphi^{\circ}(\nu)}  \right]_{0}^{1} = - \left( (S \cdot S_{s}) \frac{\psi (\theta)}{\varphi^{\circ}(\nu) }+\frac{1}{2}|S|^{2}\frac{\lambda}{\varphi^{\circ}(\nu)}  \right)\Big|_{x=0} \\
&=\frac{1}{\varphi^{\circ}(\nu)}q_{7}(\partial_{t} \lambda, \partial_{s}^{2} (\psi \kappa) ) 
+ \frac{1}{\varphi^{\circ}(\nu)} 
\partial_{t} \big( q_{5}(\lambda, \partial_{s}(\psi \kappa)) \big)  
- \frac{\psi (\theta)}{\varphi^{\circ}(\nu)}[ (\psi \kappa)_{t} +\lambda \theta_{t}]\theta_{tt}.\notag
\end{align}
To handle the last term we use the boundary conditions: twice derivation in time of \eqref{HC} gives (at the junction point)
\begin{align*}
0 &=-\sum_{i=1}^{3} (D \varphi^{\circ}(\nu^{i}))_{tt} = \sum_{i=1}^{3}  (D^{2}\varphi^{\circ}(\nu^{i}) \theta^{i}_{t} \tau^{i})_{t}
=\sum_{i=1}^{3} D^{3}\varphi^{\circ}(\nu^{i}) \nu^{i}_{t}\theta^{i}_{t} \tau^{i} + \sum_{i=1}^{3}  D^{2}\varphi^{\circ}(\nu^{i}) \theta^{i}_{tt} \tau^{i} \\
&= - \sum_{i=1}^{3} D^{3}\varphi^{\circ}(\nu^{i}) \tau^{i}\tau^{i} (\theta^{i}_{t} )^{2} + \sum_{i=1}^{3}  \frac{\psi (\theta^{i})}{\varphi^{\circ} (\nu^{i})}
\theta^{i}_{tt} \tau^{i}.
\end{align*}
Since here $u_{tt}^{1}=u_{tt}^{2}=u_{tt}^{3}$, we obtain  $Ru_{tt}^{1}=Ru_{tt}^{2}=Ru_{tt}^{3}$ with $R=\tiny{\left(\begin{array}{cc} 0&-1\\1&0 \end{array} \right)}$ which rotates vectors by $\pi/2$,  and hence (recall \eqref{uttb})
\begin{align*}
0&=Ru_{tt}^{1} \cdot (- \sum_{i=1}^{3} D^{3}\varphi^{\circ}(\nu^{i}) \tau^{i}\tau^{i} (\theta^{i}_{t} )^{2} + \sum_{i=1}^{3}  \frac{\psi (\theta^{i})}{\varphi^{\circ} (\nu^{i})}
\theta^{i}_{tt} \tau^{i})\\
&=\sum_{i=1}^{3} (R u_{tt}^{i}) \cdot (-D^{3}\varphi^{\circ}(\nu^{i}) \tau^{i}\tau^{i} (\theta^{i}_{t} )^{2} +   \frac{\psi (\theta^{i})}{\varphi^{\circ} (\nu^{i})}
\theta^{i}_{tt} \tau^{i})\\
&=\sum_{i=1}^{3} (- [ (\psi(\theta^{i}) \kappa^{i})_{t} +\lambda^{i} \theta^{i}_{t}] \tau^{i} + (\lambda^{i}_{t} - (\psi(\theta^{i}) \kappa^{i}) \theta^{i}_{t}) \nu^{i} )
\cdot (-D^{3}\varphi^{\circ}(\nu^{i}) \tau^{i}\tau^{i} (\theta^{i}_{t} )^{2} +   \frac{\psi (\theta^{i})}{\varphi^{\circ} (\nu^{i})}\theta^{i}_{tt} \tau^{i}).
\end{align*}
It follows that
\begin{align*} &\sum_{i=1}^{3} \frac{\psi (\theta^{i})}{\varphi^{\circ} (\nu^{i})}\theta^{i}_{tt} [ (\psi(\theta^{i}) \kappa^{i})_{t} +\lambda^{i} \theta^{i}_{t}] \\
&= \sum_{i=1}^{3}
D^{3}\varphi^{\circ}(\nu^{i}) \tau^{i}\tau^{i} \tau^{i} (\theta^{i}_{t} )^{2} [ (\psi(\theta^{i}) \kappa^{i})_{t} +\lambda^{i} \theta^{i}_{t}] -\sum_{i=1}^{3}
D^{3}\varphi^{\circ}(\nu^{i}) \tau^{i}\tau^{i} \nu^{i} (\theta^{i}_{t} )^{2} (\lambda^{i}_{t} - (\psi(\theta^{i}) \kappa^{i}) \theta^{i}_{t}) \\
&= \sum_{i=1}^{3}  (D^{3}\varphi^{\circ}(\nu^{i}) \tau^{i}\tau^{i} \tau^{i} + D^{3}\varphi^{\circ}(\nu^{i}) \tau^{i}\tau^{i} \nu^{i} ) \, \, q_{7}(\partial_{t} \lambda^{i}, \partial_{s}^{2} (\psi(\theta^{i}) \kappa^{i}) ),
\end{align*}
where note that $|D^{3}\varphi^{\circ}(\nu^{i}) \tau^{i}\tau^{i} \tau^{i} + D^{3}\varphi^{\circ}(\nu^{i})\tau^{i}\tau^{i} \nu^{i}| \leq C$.
The expression above together with \eqref{quasiquasi} and  \eqref{qqquasi} yields
\begin{align}\label{uffa2}
\sum_{i=1}^{3}\frac{d}{dt} &\left ( \frac{1}{2} \int_{I} |\psi(\theta^{i}) (\psi(\theta^{i}) \kappa^{i})_{ss}|^{2} \frac{1}{\varphi^{\circ} (\nu^{i})} ds \right) + \frac{1}{2}\int_{I} (\psi (\theta^{i}))^{2} |(\psi(\theta^{i}) \kappa^{i})_{sss}|^{2}
\frac{\psi(\theta^{i})}{\varphi^{\circ} (\nu^{i})} ds \\
&\leq 
\sum_{i=1}^{3} 
\partial_{t} \left( \frac{1}{\varphi^{\circ} (\nu^{i})}  q_{5}(\lambda^{i}, \partial_{s}(\psi(\theta^{i}) \kappa^{i})) \right)\Big|_{x=0}  + \sum_{i=1}^{3}  C|q_{7}(\partial_{t} \lambda^{i}, \partial_{s}^{2} (\psi(\theta^{i}) \kappa^{i}) )| \Big|_{x=0} \notag \\
&\quad + \sum_{i=1}^{3}  C\int_{I}  p_{8} (|\partial_{s}^{2} (\psi(\theta^{i}) \kappa^{i})|) ds. \notag
\end{align}
Finally we apply interpolation inequalities.  Using Proposition~\ref{IEst} and H\"older inequality as demonstrated and carefully explained in \cite[p.260-261]{MNT}  we obtain that
\begin{align}\label{p8est}
\int_{I}  p_{8} (|\partial_{s}^{2} (\psi(\theta^{i}) \kappa^{i})|) ds \leq  \epsilon 
\int_{I} (\psi (\theta^{i}))^{2} |(\psi(\theta^{i}) \kappa^{i})_{sss}|^{2}
\frac{\psi(\theta^{i})}{\varphi^{\circ} (\nu^{i})} ds^{2}  + C_{\epsilon} \| \psi(\theta^{i}) \kappa^{i}\|_{L^{2}(I)}^{14} +C
\end{align}
where the constants  depends on \eqref{boundmpiccolo}, the anisotropy map, and  the bounds of the lengths of the curves.
 
At the triple junction recall that we can write $\lambda^{i}$ in terms of $(\psi(\theta^{j})\kappa^{j})$ for $j\neq i$.
In particular, we have  that \eqref{boundlbdry} holds. Together with \eqref{derla}, Lemma~\ref{lemma2.1} 
and Lemma~\ref{lem3.7} we infer that
\begin{lemma}\label{lemqp}
We have that at the junction point there holds
\begin{align*}
\sum_{i=1}^{3}  |q_{7}(\partial_{t} \lambda^{i}, \partial_{s}^{2} (\psi(\theta^{i}) \kappa^{i}) )|  \leq  C p_{7} (|\partial_{s}^{2} (\psi(\theta^{j}) \kappa^{j})|; j=1,2,3)\,,\\
\sum_{i=1}^{3} | q_{5}( \lambda^{i}, \partial_{s} (\psi(\theta^{i}) \kappa^{i}) )|  \leq  C p_{5} (|\partial_{s} (\psi(\theta^{j}) \kappa^{j})|; j=1,2,3)\,,
\end{align*}
where $C$ depends on the anisotropy map and where the polynomials on the right-hand side now contains derivatives of $(\psi(\theta^{j}) \kappa^{j})$ for the three different curves.
\end{lemma}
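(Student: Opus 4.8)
The plan is to turn the statement into a bookkeeping exercise about weighted degrees: at the junction we substitute for $\lambda^i$ and $\lambda^i_t$ their expressions in the geometric quantities $\psi(\theta^j)\kappa^j$, $j=1,2,3$, and check that this substitution respects the weight that the conventions defining $p_\sigma$ and $q_\sigma$ assign to $\lambda$ and $\partial_t\lambda$, so that no monomial of weight exceeding $7$ (resp.\ $5$) and no spatial derivative of order exceeding $2$ (resp.\ $1$) is produced, while all coefficients stay bounded by a constant depending only on the anisotropy.

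For the second inequality I would argue as follows. By \eqref{expressionlambda} (and \eqref{boundlbdry}), at the triple junction $\lambda^i$ is a linear combination of $\psi(\theta^i)\kappa^i$ and $\psi(\theta^{i+1})\kappa^{i+1}$ with coefficients $\alpha/\beta$, $1/\beta$ that are bounded in terms of $a_0$ by \eqref{a0}; in the weighted convention the variable $\lambda$ carries weight $1$, and the right-hand side of \eqref{expressionlambda} is precisely a sum of weight-$1$ monomials in the variables $\psi(\theta^j)\kappa^j$ with bounded coefficients. Hence inserting \eqref{expressionlambda} into a generic monomial of $q_5(\lambda^i,\partial_s(\psi(\theta^i)\kappa^i))$ — which has total weight $5$ and spatial order at most $1$ — yields a sum of monomials of total weight $5$ and spatial order at most $1$ in the variables $|\partial_s^l(\psi(\theta^j)\kappa^j)|$, $l\le1$, $j=1,2,3$, with coefficients bounded by the anisotropy; summing over $i$ gives $\sum_i|q_5(\lambda^i,\partial_s(\psi(\theta^i)\kappa^i))|\le C\,p_5(|\partial_s(\psi(\theta^j)\kappa^j)|;j=1,2,3)$.

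For the first inequality the extra work is to establish that at the junction $\lambda^i_t$ has weight $3$ (the weight assigned to $\partial_t\lambda$) and spatial order at most $2$. Starting from \eqref{derla}: by Lemma~\ref{lemma2.1} one has $\nu^j_t=-[(\psi(\theta^j)\kappa^j)_s+\lambda^j\kappa^j]\tau^j$ and an analogous formula for $\tau^j_t$, so, using $\kappa^j=\tfrac1\psi(\psi(\theta^j)\kappa^j)$ and the weight-$1$ expression for $\lambda^j$ just obtained, the derivatives $(\alpha/\beta)_t$ and $(1/\beta)_t$ are weight-$2$ polynomials in the $\psi(\theta^j)\kappa^j$'s of spatial order at most $1$ with bounded coefficients; likewise $(\psi(\theta^j)\kappa^j)_t$ is, by \eqref{oil} (equivalently Lemma~\ref{lem3.7} with $j=0$) together with the substitution of $\lambda^j$, a weight-$3$ polynomial in $|\partial_s^l(\psi(\theta^j)\kappa^j)|$, $l\le2$. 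Combining these in \eqref{derla} gives $|\lambda^i_t|\le C\,p_3(|\partial_s^2(\psi(\theta^j)\kappa^j)|;j=1,2,3)$. Substituting this expression for $\lambda^i_t$ and \eqref{expressionlambda} for $\lambda^i$ into a generic monomial of $q_7(\partial_t\lambda^i,\partial_s^2(\psi(\theta^i)\kappa^i))$ — total weight $7$, spatial order $\le2$ — replaces each $\lambda^i$-factor by a weight-$1$ term and each $\lambda^i_t$-factor by a weight-$3$ term of spatial order $\le2$, so the product is a sum of monomials of total weight $7$ and spatial order $\le2$ in $|\partial_s^l(\psi(\theta^j)\kappa^j)|$, $l\le2$, $j=1,2,3$, with coefficients bounded using \eqref{boundmpiccolo} and the smoothness of $\varphi^\circ$ (to control $1/\psi$, $\psi$ and its $\theta$-derivatives) and \eqref{a0} (for the coefficients coming from \eqref{expressionlambda} and the rule \eqref{rule}). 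Summing over $i$ gives the first estimate.

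The only real obstacle is this bookkeeping: one must check that neither substitution raises the total weight beyond $7$ (resp.\ $5$) nor introduces a spatial derivative of order higher than $2$ (resp.\ $1$). This is exactly the assertion that $\lambda^i$ is weight $1$ and $\lambda^i_t$ is weight $3$ of spatial order $\le2$ at the junction, which rests on Lemma~\ref{lemma2.1}, \eqref{oil}, \eqref{derla} and \eqref{expressionlambda}; everything else is the verification that the new coefficients remain uniformly bounded in terms of the anisotropy and $a_0$.
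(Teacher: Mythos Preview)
Your proposal is correct and follows exactly the approach the paper indicates. The paper's own proof is the single sentence preceding the lemma, citing \eqref{boundlbdry}, \eqref{derla}, Lemma~\ref{lemma2.1} and Lemma~\ref{lem3.7}; you simply unpack these ingredients --- using \eqref{expressionlambda} to see that $\lambda^i$ has weight~$1$ at the junction, and combining \eqref{derla} with the formulas for $\tau_t,\nu_t$ from Lemma~\ref{lemma2.1} and the expression \eqref{oil} for $(\psi\kappa)_t$ to see that $\lambda^i_t$ has weight~$3$ and spatial order at most~$2$ --- and then carry out the weight-bookkeeping that the paper leaves to the reader.
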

Using Lemma~\ref{lemqp}, interpolation estimates, and H\"older inequality as in \cite[p. 262]{MNT}  we obtain 
\begin{align}\label{q7est}
 \sum_{i=1}^{3}  & |q_{7}(\partial_{t} \lambda^{i}, \partial_{s}^{2} (\psi(\theta^{i}) \kappa^{i}) )|  
 \leq  C p_{7} (|\partial_{s}^{2} (\psi(\theta^{j}) \kappa^{j})|; j=1,2,3) \\
& \leq
 \sum_{i=1}^{3}  \epsilon 
\int_{I} (\psi (\theta^{i}))^{2} |(\psi(\theta^{i}) \kappa^{i})_{sss}|^{2}
\frac{\psi(\theta^{i})}{\varphi^{\circ} (\nu^{i})} ds^{2}  + C_{\epsilon} \| \psi(\theta^{i}) \kappa^{i}\|_{L^{2}(I)}^{14} +C.\notag
\end{align}
From \eqref{uffa2}, \eqref{p8est}, \eqref{q7est},
choosing $\epsilon$ appropriately, integrating in time and using  \eqref{boundmpiccolo} we obtain 
\begin{align*}
&\sum_{i=1}^{3}  \| (\psi(\theta^{i}) \kappa^{i})_{ss} \|_{L^{2}(I)}^{2}(t)  \leq 
C\sum_{i=1}^{3}  \| (\psi(\theta^{i}) \kappa^{i})_{ss} \|_{L^{2}(I)}^{2}(0)
+  Ct + \sum_{i=1}^{3} C\int_{0}^{t} \| \psi(\theta^{i}) \kappa^{i}\|_{L^{2}(I)}^{14} dt 
\\ & \quad +\sum_{i=1}^{3}
C | q_{5}(\lambda^{i}, \partial_{s}(\psi(\theta^{i}) \kappa^{i}))|(t)  \Big|_{(x=0)}  +\sum_{i=1}^{3}
C | q_{5}(\lambda^{i}, \partial_{s}(\psi(\theta^{i}) \kappa^{i}))|(0)  \Big|_{(x=0)} .
\end{align*}
By  Lemma~\ref{lemqp}, and together again with interpolation  and H\"older  inequalities (cp. with \cite[p263]{MNT}) we obtain 
that at the junction point we have, for any time $t$,
\begin{align*}
\sum_{i=1}^{3}
C | q_{5}(\lambda^{i}, \partial_{s}(\psi(\theta^{i}) \kappa^{i}))|(t) 
&\leq  C p_{5} (|\partial_{s} (\psi(\theta^{j}) \kappa^{j})|; j=1,2,3)\\
& \leq \frac{1}{2}\sum_{i=1}^{3}  \| (\psi(\theta^{i}) \kappa^{i})_{ss} \|_{L^{2}(I)}^{2}(t)  +C \| \psi(\theta^{i}) \kappa^{i}\|_{L^{2}(I)}^{10}(t),
\end{align*}
so that we  finally infer
\begin{align*}
\sum_{i=1}^{3}  \| (\psi(\theta^{i}) \kappa^{i})_{ss} \|_{L^{2}(I)}^{2}(t)  \leq  C_{0} + Ct + \sum_{i=1}^{3} C \left(\| \psi(\theta^{i}) \kappa^{i}\|_{L^{2}(I)}^{10}(t)  + \int_{0}^{t} \|  \psi(\theta^{i}) \kappa^{i}\|_{L^{2}(I)}^{14} dt   \right),
\end{align*}
where 
$$C_{0}= C\sum_{i=1}^{3}  \| (\psi(\theta^{i}) \kappa^{i})_{ss} \|_{L^{2}(I)}^{2}(0) + C \sum_{i=1}^{3}\| \psi(\theta^{i}) \kappa^{i}\|_{L^{2}(I)}^{10} (0)$$
and $C$ depends on \eqref{boundmpiccolo}, the anisotropy map, and  the  bound on the lengths of the curves.

Upon recalling that $\kappa_{\varphi}=\frac{1}{\varphi^{\circ}(\nu)} (\psi (\theta)\kappa)$ and interpolation inequalities
from Proposition~\ref{IEst}  we can summarize our above findings as follows:

\begin{lemma}\label{lemstima}
If for $0< T <\infty$, the lengths of the curves of the network are uniformly bounded from below
\begin{align*}
L(u^{i}(t)) \geq \delta >0, \qquad i=1,2,3, \text{ for any } t \in [0,T),
\end{align*}
and we have a uniform bound for the curvatures
\begin{align*}
\sup_{ t \in [0,T)}  \sum_{i=1}^{3}\| \kappa_{\varphi}^{i}\|_{L^{2}(I)} \leq C_{K}
\end{align*}
then 
\begin{align*}
\sup_{ t \in [0,T)} \sum_{i=1}^{3} (\| (\kappa_{\varphi}^{i})_{s}\|_{L^{2}(I)}+ \| (\kappa_{\varphi}^{i})_{ss}\|_{L^{2}(I)}) \leq C\\
\sup_{ t \in [0,T)} \sum_{i=1}^{3} (\| \kappa^{i}_{s}\|_{L^{2}(I)}+ \| \kappa^{i}_{ss}\|_{L^{2}(I)}) \leq C,
\end{align*}
hold for a solution of the geometric problem (cf. Section~\ref{sec:geoprob}).
The constant  $C$ depends on $\delta$, $C_{K}$, $T$, the initial data $\|(\psi\kappa^{i})_{ss}\|_{L^{2}}(0)$ for $i=1,2,3$, $m$, $M$ (recall \eqref{boundmpiccolo}), $a_{0}$ (recall \eqref{a0}), and
 on $ C^{-1} \leq \varphi^{\circ} (\nu)\leq C$,  $|D\varphi^{\circ} (\nu)|\leq C$,  $\sup_{S^{1}}(| \psi'| + |\psi''|+ |\psi'''|)$ . 
\end{lemma}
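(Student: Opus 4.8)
Nearly all of the analysis has already been carried out in the computations of this subsection; these produced the integrated estimate
\[
\sum_{i=1}^{3}\|(\psi(\theta^{i})\kappa^{i})_{ss}\|_{L^{2}(I)}^{2}(t)\le C_{0}+Ct+\sum_{i=1}^{3}C\Big(\|\psi(\theta^{i})\kappa^{i}\|_{L^{2}(I)}^{10}(t)+\int_{0}^{t}\|\psi(\theta^{i})\kappa^{i}\|_{L^{2}(I)}^{14}\,dt\Big),
\]
valid for a solution of the geometric problem, where $C_{0}$ is controlled by the initial data $\|(\psi(\theta^{i})\kappa^{i})_{ss}\|_{L^{2}(I)}(0)$ and $\|\psi(\theta^{i})\kappa^{i}\|_{L^{2}(I)}(0)$, and $C$ depends on \eqref{boundmpiccolo}, $a_{0}$ (through \eqref{boundlbdry} and Lemma~\ref{lemqp}), the bounds $C^{-1}\le\varphi^{\circ}(\nu)\le C$, $|D\varphi^{\circ}(\nu)|\le C$, $\sup_{S^{1}}(|\psi'|+|\psi''|+|\psi'''|)$, and on $\delta$. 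It therefore remains to (i) turn the hypothesis on $\sum_{i}\|\kappa_{\varphi}^{i}\|_{L^{2}(I)}$ into a uniform $L^{2}$-bound for $\psi(\theta^{i})\kappa^{i}$ and for $\kappa^{i}$, (ii) feed this into the displayed estimate, (iii) interpolate to bound the first $s$-derivative, and (iv) translate everything back to $\kappa_{\varphi}^{i}$ and $\kappa^{i}$.

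For (i): since $\kappa_{\varphi}=\frac{\psi(\theta)}{\varphi^{\circ}(\nu)}\kappa$ and, by \eqref{boundmpiccolo} together with $C^{-1}\le\varphi^{\circ}(\nu)\le C$, the three quantities $|\kappa^{i}|$, $|\psi(\theta^{i})\kappa^{i}|$ and $|\kappa_{\varphi}^{i}|$ are pointwise comparable up to constants depending only on $m$, $M$ and $C$, the assumption $\sup_{[0,T)}\sum_{i}\|\kappa_{\varphi}^{i}\|_{L^{2}(I)}\le C_{K}$ gives $\sup_{[0,T)}\sum_{i}\big(\|\kappa^{i}\|_{L^{2}(I)}+\|\psi(\theta^{i})\kappa^{i}\|_{L^{2}(I)}\big)\le CC_{K}$. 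For (ii)--(iii): with $t\le T<\infty$ and this $L^{2}$-bound, the right-hand side of the displayed inequality is bounded by a constant depending only on the quantities listed in the statement, whence $\sup_{[0,T)}\sum_{i}\|(\psi(\theta^{i})\kappa^{i})_{ss}\|_{L^{2}(I)}^{2}\le C$. Applying Proposition~\ref{IEst} with $n=1$, $m=2$, $p=2$ (so $\sigma=\tfrac12$ and $L^{m\sigma}=L\ge\delta$) to $f=\psi(\theta^{i})\kappa^{i}$ on the curve $u^{i}$ gives
\[
\|(\psi(\theta^{i})\kappa^{i})_{s}\|_{L^{2}(I)}\le C\|(\psi(\theta^{i})\kappa^{i})_{ss}\|_{L^{2}(I)}^{1/2}\|\psi(\theta^{i})\kappa^{i}\|_{L^{2}(I)}^{1/2}+\frac{B}{\delta}\|\psi(\theta^{i})\kappa^{i}\|_{L^{2}(I)},
\]
which is again uniformly bounded on $[0,T)$.

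For (iv): differentiating $\kappa_{\varphi}=\frac{1}{\varphi^{\circ}(\nu)}(\psi(\theta)\kappa)$ and using $\theta_{s}=\kappa=\frac{1}{\psi}(\psi\kappa)$ as in \eqref{rule}, one expresses $(\kappa_{\varphi}^{i})_{s}$ and $(\kappa_{\varphi}^{i})_{ss}$ --- and analogously $\kappa_{s}^{i}$, $\kappa_{ss}^{i}$ starting from $\kappa=\frac{1}{\psi(\theta)}(\psi(\theta)\kappa)$ --- as a bounded coefficient (a function of $\frac{1}{\psi},\psi,\psi',\psi'',\varphi^{\circ}(\nu),D\varphi^{\circ}(\nu)$, all bounded in view of the smoothness of $\varphi^{\circ}$, \eqref{boundmpiccolo} and $|D\varphi^{\circ}(\nu)|\le C$) times a polynomial $p_{\sigma}$ in $\psi(\theta^{i})\kappa^{i}$ and its first and second $s$-derivatives. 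The top-order contributions are estimated in $L^{2}(I)$ by the bounds from (ii)--(iii); the genuinely nonlinear lower-order terms are controlled via Proposition~\ref{IEst} exactly as in \eqref{p8est}--\eqref{q7est}, using the $L^{2}$-bounds of (i)--(iii) together with $L(u^{i})\ge\delta$. Collecting these estimates and tracking the dependence of the constants yields both asserted bounds with $C$ depending on $\delta$, $C_{K}$, $T$, the initial data and the anisotropy. The only delicate point --- and the main bookkeeping obstacle --- is step (iv): each $s$-differentiation landing on a $\theta$-dependent coefficient brings down a factor $\kappa^{i}=\frac{1}{\psi}(\psi\kappa)$, so the passage between $\psi(\theta^{i})\kappa^{i}$, $\kappa_{\varphi}^{i}$ and $\kappa^{i}$ creates nonlinear terms one order lower that must be absorbed by interpolation rather than estimated crudely; the two-sided bounds $m\le\psi\le M$ and $C^{-1}\le\varphi^{\circ}(\nu)\le C$ are precisely what keep this transfer under control.
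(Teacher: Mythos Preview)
Your proposal is correct and follows essentially the same approach as the paper: the paper simply writes ``Upon recalling that $\kappa_{\varphi}=\frac{1}{\varphi^{\circ}(\nu)} (\psi (\theta)\kappa)$ and interpolation inequalities from Proposition~\ref{IEst} we can summarize our above findings as follows'' and then states the lemma, so steps (i)--(iv) of your plan just spell out in more detail what the paper leaves implicit.
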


\subsection{Main result}
From Lemma \ref{lemstima}, Theorem \ref{teo:GEO} and Proposition \ref{pro:GEO} we finally obtain our main result 
on the behavior of a geometric solution at the maximal existence time.

\begin{theo}\label{teo:sciop}
Let $\alpha \in (0,1)$, $\sigma^{i}$ be as in Definition~\ref{admtriod}, and
$u^{i} \in C^{\frac{2+\alpha}{2}, 2+\alpha}([0,T) \times [0,1], \R^{2}) \cap C^{\infty} ((0,T) \times [0,1], \R^{2})$, 
$i=1,2,3$, be geometric solutions (as in Theorem~\ref{teo:GEO}) defined in the maximal time interval $[0,T)$.
Then we have
\begin{equation}\label{eqmaxmin}
\liminf_{t\to T} \min_{i\in\{1,2,3\}} L(u^i(t)) = 0
\qquad\text{or}\qquad
\limsup_{t\to T} \max_{i\in\{1,2,3\}} \| \kappa^i_\varphi\|_{L^2(I)} = +\infty.
\end{equation}
\end{theo}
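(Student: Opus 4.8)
The plan is to derive the statement by combining Proposition~\ref{pro:GEO} with the a priori estimates of Lemma~\ref{lemstima}: the former already gives the dichotomy with the $H^1$-norm of $\kappa_\varphi$ in place of the $L^2$-norm, and the latter shows that, as long as the lengths stay bounded away from zero, an $L^2$-bound on $\kappa_\varphi$ self-improves to an $H^1$-bound. Throughout I assume $T<\infty$, which is the relevant case (cf. the abstract). Arguing by contradiction, suppose that both alternatives in \eqref{eqmaxmin} fail. Then $\liminf_{t\to T}\min_i L(u^i(t))>0$ and $\limsup_{t\to T}\max_i\|\kappa_\varphi^i\|_{L^2(I)}<+\infty$, so there exist $\delta>0$, $C_K>0$ and $t_0\in(0,T)$ such that
\[
\min_{i\in\{1,2,3\}}L(u^i(t))\ge\delta
\qquad\text{and}\qquad
\sum_{i=1}^3\|\kappa_\varphi^i(t,\cdot)\|_{L^2(I)}\le C_K
\qquad\text{for all }t\in[t_0,T).
\]

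Next I would restart the flow at a time $t_1\in[t_0,T)$: by Theorem~\ref{teo:GEO} the solution is smooth on $(0,T)\times[0,1]$, hence in particular smooth up to $t=t_1$ (since $t_1>0$), so Lemma~\ref{lemstima} is applicable to the geometric solution on the interval $[t_1,T)$ — here one uses that $T-t_1<\infty$ and that $\sum_i\|(\psi(\theta^i)\kappa^i)_{ss}\|_{L^2(I)}(t_1)<\infty$ by smoothness. Lemma~\ref{lemstima}, together with the two bounds above, then yields a constant $C$ with
\[
\sup_{t\in[t_1,T)}\sum_{i=1}^3\bigl(\|(\kappa_\varphi^i)_s\|_{L^2(I)}+\|(\kappa_\varphi^i)_{ss}\|_{L^2(I)}\bigr)\le C,
\]
and therefore $\sup_{t\in[t_1,T)}\max_i\|\kappa_\varphi^i\|_{H^1(I)}\le(C_K^2+C^2)^{1/2}<+\infty$. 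In particular $\limsup_{t\to T}\max_i\|\kappa_\varphi^i\|_{H^1(I)}<+\infty$, while at the same time $\liminf_{t\to T}\min_i L(u^i(t))\ge\delta>0$. Thus neither alternative in \eqref{eqminmax} of Proposition~\ref{pro:GEO} holds, contradicting that proposition. Hence at least one of the two alternatives in \eqref{eqmaxmin} must hold, which is the assertion.

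The genuine content has already been established: the hard work is the chain of integral estimates and interpolation inequalities behind Lemma~\ref{lemstima}, and the reparametrization/restart argument behind Proposition~\ref{pro:GEO}. The only two points that need a little care in the present deduction are the passage from the $\liminf/\limsup$ hypotheses to honest uniform bounds on an interval $[t_0,T)$ (which uses only continuity in $t$ of the length and of $\|\kappa_\varphi^i(t,\cdot)\|_{L^2(I)}$), and the restart at a positive time $t_1$, which is required because Lemma~\ref{lemstima} presupposes smoothness up to its initial time and is stated for a finite time horizon; neither of these is a real obstacle.
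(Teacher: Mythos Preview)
Your argument is correct and is precisely the deduction the paper has in mind: it simply states that the theorem follows from Lemma~\ref{lemstima}, Theorem~\ref{teo:GEO} and Proposition~\ref{pro:GEO}, and you have spelled out the contradiction argument (with the restart at a positive time to ensure smooth initial data for Lemma~\ref{lemstima}) that makes this work. The only superfluous remark is the appeal to continuity in $t$ when passing from the $\liminf/\limsup$ hypotheses to uniform bounds on $[t_0,T)$; this follows directly from the definitions of $\liminf$ and $\limsup$ without any continuity.
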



\renewcommand{\thesection}{}

\appendix\renewcommand{\thesection}{\Alph{section}}
\setcounter{equation}{0}
\renewcommand{\theequation}{\Alph{section}\arabic{equation}}


\section{Some useful results }\label{AppA}
The following remark and the next three lemmas are a straight forward adaptation to the present setting of the lemmas presented in \cite[Appendix~B]{DLP-STE}.
\begin{rem}\label{B1}
If $v \in C^{ \frac{k+\alpha}{2}, k+\alpha}([0,T]\times[0,1])$, $k \in \mathbb{N}_0$, then $\partial_x^l v \in C^{ \frac{k-l+\alpha}{2}, k-l+\alpha}([0,T]\times[0,1])$ for all $0 \leq l\leq k$ and
$$\|\partial_x^l v \|_{C^{ \frac{k-l+\alpha}{2}, k-l+\alpha}([0,T]\times[0,1])} \leq \| v\|_{C^{ \frac{k+\alpha}{2}, k+\alpha}([0,T]\times[0,1])} \, .$$
In particular at each fixed $x \in [0,1]$ we have $\partial_x^l v (\cdot,x )\in C^{s,\beta}([0,T])$ 
with $s =[\frac{k-l+\alpha}{2}]$ and $\beta = \frac{k-l+\alpha}{2}-s$.
\end{rem}

\begin{lemma}\label{B2}
For $k \in \mathbb{N}_0$, $\alpha,\beta \in (0,1)$ and $T>0$ we have
\begin{enumerate}
\item if $v,w \in C^{\frac{k+\alpha}{2},k+\alpha}([0,T]\times [0,1])$, then
$$ \| v w \|_{C^{\frac{k+\alpha}{2},k+\alpha}} \leq C \| v \|_{C^{\frac{k+\alpha}{2},k+\alpha}} \| w \|_{C^{\frac{k+\alpha}{2},k+\alpha}} \, ,$$
with $C=C(k)>0$;
\item if $v\in C^{\frac{\alpha}{2},\alpha}([0,T]\times [0,1])$, $v(t,x)\ne 0$ for all $(t,x)$, then
$$ \Big\| \frac{1}{v} \Big\|_{C^{\frac{\alpha}{2},\alpha}} \leq \Big\| \frac{1}{v} \Big\|^2_{C^{0}([0,T]\times [0,1])} \| v \|_{C^{\frac{\alpha}{2},\alpha}} \, .$$ 
\end{enumerate}
Similar statements are true for functions in $C^{k,\beta}([0,T])$ and $C^{k,\beta}([0,1])$.
\end{lemma}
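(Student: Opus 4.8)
The plan is to prove the two assertions separately. Part (2) is a purely algebraic computation, while part (1) follows from an elementary one-variable product rule for a single Hölder seminorm, combined with the Leibniz formula and an induction on $k$.

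I would begin with part (2). The starting point is the identity
\[
\frac{1}{v(t,x)}-\frac{1}{v(t',x')}=\frac{v(t',x')-v(t,x)}{v(t,x)\,v(t',x')},
\]
so that pointwise $\bigl|\tfrac{1}{v(t,x)}-\tfrac{1}{v(t',x')}\bigr|\leq \bigl\|\tfrac1v\bigr\|_{C^0}^2\,|v(t,x)-v(t',x')|$. Applying this with $t'=t$ and dividing by $|x-x'|^\alpha$ gives $[\tfrac1v]_{\alpha,x}\leq\bigl\|\tfrac1v\bigr\|_{C^0}^2[v]_{\alpha,x}$, and with $x'=x$ it gives $[\tfrac1v]_{\alpha/2,t}\leq\bigl\|\tfrac1v\bigr\|_{C^0}^2[v]_{\alpha/2,t}$. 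For the sup-norm contribution I use that $|v(t,x)|\cdot|\tfrac1{v(t,x)}|=1$ at every point, whence $\bigl\|\tfrac1v\bigr\|_{C^0}\|v\|_{C^0}\geq 1$ and therefore $\bigl\|\tfrac1v\bigr\|_{C^0}\leq\bigl\|\tfrac1v\bigr\|_{C^0}^2\|v\|_{C^0}$. Since $\|\cdot\|_{C^{\alpha/2,\alpha}}$ is the sum of these three quantities, summing yields $\bigl\|\tfrac1v\bigr\|_{C^{\alpha/2,\alpha}}\leq\bigl\|\tfrac1v\bigr\|_{C^0}^2\|v\|_{C^{\alpha/2,\alpha}}$ with constant exactly $1$, as stated.

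For part (1) I would first record, for either seminorm $[\cdot]_\rho\in\{[\cdot]_{\rho,x},[\cdot]_{\rho,t}\}$, the elementary inequality $[fg]_\rho\leq\|f\|_{C^0}[g]_\rho+[f]_\rho\|g\|_{C^0}$, which is immediate from $f(P)g(P)-f(Q)g(Q)=f(P)\bigl(g(P)-g(Q)\bigr)+\bigl(f(P)-f(Q)\bigr)g(Q)$. Together with $\|fg\|_{C^0}\leq\|f\|_{C^0}\|g\|_{C^0}$ this settles the base case $k=0$. For the inductive step I would reduce the $C^{\frac{k+\alpha}{2},k+\alpha}$-norm of $vw$ to the $C^0$-norm plus the $C^{\frac{k-1+\alpha}{2},k-1+\alpha}$-norm of $\partial_x(vw)$ and (for $k\geq2$) the $C^{\frac{k-2+\alpha}{2},k-2+\alpha}$-norm of $\partial_t(vw)$, using the standard equivalence of these parabolic norms. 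Writing $\partial_x(vw)=(\partial_x v)w+v(\partial_x w)$ and $\partial_t(vw)=(\partial_t v)w+v(\partial_t w)$, each summand is a product of factors that, by Remark~\ref{B1}, lie in the appropriate lower-order parabolic Hölder space, so the inductive hypothesis applies and the product bound with $C=C(k)$ follows.

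The step I expect to be the main obstacle is the control of the cross terms: when Leibniz is applied, one obtains products in which neither factor is of top parabolic order, and one must ensure that every Hölder seminorm occurring in the one-variable product rule is still dominated by the full norm of $v$ or $w$. This is handled by Remark~\ref{B1} (loss of derivatives) together with the embedding $C^{\beta}\hookrightarrow C^{\alpha}$ for $\beta\geq\alpha$, which is valid precisely because the domain $[0,T]\times[0,1]$ is bounded; this boundedness is the only place where the geometry of the domain enters. Finally, the analogous statements for $C^{k,\beta}([0,T])$ and $C^{k,\beta}([0,1])$ follow by the identical argument, with only one difference quotient present instead of two.
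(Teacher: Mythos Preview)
Your argument is correct. For part~(2) the identity you use gives the inequality with constant exactly~$1$, and for part~(1) the base case together with Leibniz and induction is the standard route; the one place to be slightly careful is the case $k=1$, where the equivalence you invoke does not automatically capture the intermediate time seminorm $[vw]_{(1+\alpha)/2,t}$, but this term is handled directly by the same one-variable product rule $[vw]_\rho\le\|v\|_{C^0}[w]_\rho+[v]_\rho\|w\|_{C^0}$ that you already recorded.

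As for comparison with the paper: the paper does not give a proof of this lemma at all. It simply states that the result is ``a straight forward adaptation to the present setting of the lemmas presented in \cite[Appendix~B]{DLP-STE}''. So your write-up is in fact more detailed than what the paper provides; there is nothing further to compare.
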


\begin{lemma}\label{B3}
For $n \in \mathbb{N}$, $k \in \mathbb{N}_0$, $\alpha,\beta \in (0,1)$ and $T>0$ we have
\begin{enumerate}
\item 
if a vector-field $v\in C^{\frac{\alpha}{2},\alpha}([0,T]\times [0,1];\R^n)$, then
$$ \|\, |v|\, \|_{C^{\frac{\alpha}{2},\alpha}} \leq C  \| v \|_{C^{\frac{\alpha}{2},\alpha}} \, ,$$
with $C=C(n)$.
\item for $v, w\in C^{\frac{\alpha}{2},\alpha}([0,T]\times [0,1];\R^n)$  we have
$$ \|\, |v|- |w|\, \|_{C^{\frac{\alpha}{2},\alpha}} \leq  C  \left \| \frac{1}{|v|+|w|} \right\|^{2}_{C^{0} ([0,T]\times [0,1])}
 (\|  v \|_{C^{\frac{\alpha}{2},\alpha}} + \|  w\|_{C^{\frac{\alpha}{2},\alpha}} )^{2} \| v-w \|_{C^{\frac{\alpha}{2},\alpha}} $$
\end{enumerate}
with $C=C(n)$.
Similar statements are true for functions in $C^{k,\beta}([0,T])$ and $C^{k,\beta}([0,1])$.
\end{lemma}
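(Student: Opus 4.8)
The plan is to reduce both statements to the scalar product and reciprocal estimates already recorded in Lemma~\ref{B2}, after exploiting two elementary pointwise inequalities for the Euclidean norm on $\R^n$. Throughout I use the parabolic H\"older seminorms $[\cdot]_{\alpha,x}$, $[\cdot]_{\alpha/2,t}$ introduced in Section~\ref{sec:2}, together with the convention that the $C^{\frac{\alpha}{2},\alpha}$-norm of a vector field is the sum of the norms of its components, so that passing between the Euclidean norm $|\cdot|$ and the componentwise sum costs only a factor $C(n)$.

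For part (1) I would start from the reverse triangle inequality $\big||v(t,x)|-|v(t,y)|\big|\le |v(t,x)-v(t,y)|$ and its analogue in the time variable. These show immediately that each seminorm of the scalar function $|v|$ is controlled by the corresponding seminorm of $v$ measured in the Euclidean norm, while $\||v|\|_{C^0}\le\|v\|_{C^0}$. Summing the contributions and converting the Euclidean norm into the componentwise convention yields $\||v|\|_{C^{\frac{\alpha}{2},\alpha}}\le C(n)\,\|v\|_{C^{\frac{\alpha}{2},\alpha}}$.

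For part (2) the key algebraic step is the factorisation
\begin{align}\label{factB3}
|v|-|w|=\frac{|v|^2-|w|^2}{|v|+|w|}=\frac{(v-w)\cdot(v+w)}{|v|+|w|},
\end{align}
valid wherever $|v|+|w|>0$; if $|v|+|w|$ vanishes somewhere then $\|1/(|v|+|w|)\|_{C^0}=+\infty$ and the asserted inequality is trivial, so I may assume this $C^0$-norm is finite. I would then estimate the three factors in \eqref{factB3} separately. The numerator is a finite sum of products of scalar components, so Lemma~\ref{B2}(1) gives $\|(v-w)\cdot(v+w)\|_{C^{\frac{\alpha}{2},\alpha}}\le C\|v-w\|_{C^{\frac{\alpha}{2},\alpha}}\|v+w\|_{C^{\frac{\alpha}{2},\alpha}}$. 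For the denominator I would first invoke part (1) of the present lemma to obtain $|v|,|w|\in C^{\frac{\alpha}{2},\alpha}$, whence $\||v|+|w|\|_{C^{\frac{\alpha}{2},\alpha}}\le C(n)(\|v\|_{C^{\frac{\alpha}{2},\alpha}}+\|w\|_{C^{\frac{\alpha}{2},\alpha}})$, and then apply Lemma~\ref{B2}(2) to the reciprocal, which produces the factor $\|1/(|v|+|w|)\|^2_{C^0}$.

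Chaining these through one more application of the product rule Lemma~\ref{B2}(1) for the quotient in \eqref{factB3}, and using $\|v+w\|_{C^{\frac{\alpha}{2},\alpha}}\le \|v\|_{C^{\frac{\alpha}{2},\alpha}}+\|w\|_{C^{\frac{\alpha}{2},\alpha}}$, collects exactly two powers of $(\|v\|_{C^{\frac{\alpha}{2},\alpha}}+\|w\|_{C^{\frac{\alpha}{2},\alpha}})$ — one from $\|v+w\|$ and one from $\||v|+|w|\|$ — together with the single factor $\|v-w\|_{C^{\frac{\alpha}{2},\alpha}}$ and the square of the reciprocal $C^0$-norm, which is precisely the claimed bound. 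The corresponding statements for functions on $[0,T]$ or $[0,1]$ alone follow verbatim, since Lemma~\ref{B2} holds in those settings too. I expect the only delicate point to be the bookkeeping of the $\R^n$-norm conventions so that all constants depend on $n$ only; there is no analytic obstacle beyond the elementary identity \eqref{factB3}.
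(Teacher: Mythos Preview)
Your argument is correct. The paper itself gives no proof of this lemma, merely remarking that it is a straightforward adaptation of the corresponding result in \cite[Appendix~B]{DLP-STE}; your approach via the reverse triangle inequality for part~(1) and the identity $|v|-|w|=\dfrac{(v-w)\cdot(v+w)}{|v|+|w|}$ together with Lemma~\ref{B2} for part~(2) is exactly the standard route and yields the stated estimate with the correct powers.
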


\begin{lemma}\label{B5}
Let $T<1$ and $v \in C^{ \frac{2+\alpha}{2}, 2+\alpha}([0,T]\times[0,1])$ such that $v(0,x)= 0$, for any $x \in [0,1]$ then
$$ \| \partial_x^l v \|_{C^{\frac{m+\alpha}{2}, m+\alpha}} \leq C(m) T^{\beta} \|  v \|_{C^{\frac{2+\alpha}{2}, 2+\alpha}} $$
for all $l,m \in \mathbb{N}_0$ such that $l+m<2$. Here $\beta=\max \{ \frac{1-\alpha}{2}, \frac{\alpha}{2} \} \in (0,1)$; more precisely for $l= 1$ then $\beta=\frac{\alpha}{2}$.

In particular, for each $x \in [0,1]$ fixed
$$ \| \partial_x^l v (\cdot, x) \|_{C^{0,\frac{m+\alpha}{2}}([0,T])} \leq C(m) T^{\beta} \|  v \|_{C^{\frac{2+\alpha}{2}, 2+\alpha}} $$
for all $l,m \in \mathbb{N}_0$ such that $l+m<2$.
\end{lemma}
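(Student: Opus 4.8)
The plan is to exploit the single hypothesis $v(0,x)=0$: differentiating it in $x$ shows that $\partial_x^j v(0,\cdot)\equiv 0$ for $j=0,1,2$. Every seminorm entering $\|\partial_x^l v\|_{C^{(m+\alpha)/2,m+\alpha}}$ with $l+m<2$ has parabolic order strictly below $2+\alpha$, so the idea is to trade the excess regularity encoded in $\|v\|_{C^{(2+\alpha)/2,2+\alpha}}$ against the vanishing of $v$ and its spatial derivatives at $t=0$, each such trade producing a positive power of $T\le 1$. Since only finitely many seminorms occur in the definition of the norm, it then suffices to compute the exponent contributed by each and keep the one dictated by the slowest-decaying contribution.

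First I would handle the supremum terms. For $v$ itself the fundamental theorem of calculus, $v(t,x)=\int_0^t v_\tau(\tau,x)\,d\tau$, gives $\sup|v|\le T\sup|v_t|\le T\,\|v\|_{C^{(2+\alpha)/2,2+\alpha}}$. For the spatial derivatives $\partial_x^j v$ with $j=1,2$, whose time derivative is not controlled by the $C^{(2+\alpha)/2,2+\alpha}$-norm, I would instead combine their vanishing at $t=0$ with the temporal H\"older seminorms already present in that norm, writing $\sup|v_x|\le [v_x]_{(1+\alpha)/2,t}\,T^{(1+\alpha)/2}$ and $\sup|v_{xx}|\le [v_{xx}]_{\alpha/2,t}\,T^{\alpha/2}$; both are bounded by the corresponding power of $T$ times $\|v\|_{C^{(2+\alpha)/2,2+\alpha}}$.

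The heart of the matter, and where I expect the main difficulty, is the control of the H\"older seminorms. For a temporal seminorm of order below the one available, such as $[v]_{\alpha/2,t}$ or $[v_x]_{\alpha/2,t}$, I would split $|t-t'|=|t-t'|^{\rho}\,|t-t'|^{\mathrm{exc}}$ with $\rho$ the required exponent, estimate $|t-t'|^{\mathrm{exc}}\le T^{\mathrm{exc}}$, and bound the remaining factor by the Lipschitz-in-time constant $\sup|v_t|$ or by a higher temporal seminorm already under control. For a spatial seminorm such as $[v_x]_{\alpha,x}$ I would estimate the increment $|v_x(t,x)-v_x(t,y)|$ in two complementary ways, spatially through $\sup|v_{xx}|$ (which itself already carries a factor $T^{\alpha/2}$) and, when a better exponent is needed, temporally through the vanishing at $t=0$, and then optimise over $|x-y|\in(0,1]$. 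In this bookkeeping the two quantities $\tfrac{1-\alpha}{2}$ and $\tfrac{\alpha}{2}$ appearing in $\beta$ arise respectively from the temporal seminorm $[v]_{(1+\alpha)/2,t}$ and from the (crude) spatial bound $[v_x]_{\alpha,x}\le \sup|v_{xx}|\le T^{\alpha/2}\|v\|$, the latter also explaining the value $\beta=\tfrac{\alpha}{2}$ in the case $l=1$; collecting the finitely many exponents then yields the stated power $T^\beta$.

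Finally, the pointwise-in-$x$ statement for $\|\partial_x^l v(\cdot,x)\|_{C^{0,(m+\alpha)/2}([0,T])}$ follows either directly, by running the temporal estimates above at a fixed $x$, or from the full estimate together with Remark~\ref{B1}. The main obstacle throughout is the interpolation and exponent bookkeeping for the mixed spatial--temporal seminorms, where one has to verify that each contribution decays at least like $T^\beta$.
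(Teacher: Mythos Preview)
The paper does not give its own proof of this lemma; it simply records that the result is ``a straightforward adaptation to the present setting of the lemmas presented in \cite[Appendix~B]{DLP-STE}''. Your outline is exactly the standard argument carried out there: use $\partial_x^j v(0,\cdot)=0$ for $j=0,1,2$ together with the higher-order temporal H\"older seminorms already contained in the $C^{(2+\alpha)/2,\,2+\alpha}$-norm to extract a positive power of $T$ from each lower-order seminorm, and then take the worst exponent over the finitely many terms. So your approach coincides with the one the paper defers to.

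One bookkeeping caveat worth flagging. In the case $(l,m)=(0,1)$ your own analysis produces $T^{(1-\alpha)/2}$ from $[v]_{(1+\alpha)/2,t}$ (via $|v(t,x)-v(t',x)|\le |t-t'|\sup|v_t|$) and $T^{\alpha/2}$ from $[v_x]_{\alpha,x}$ (via $\sup|v_{xx}|$); the common exponent one actually obtains is therefore $\min\{\tfrac{1-\alpha}{2},\tfrac{\alpha}{2}\}$, not $\max$. Either the stated value of $\beta$ contains a misprint, or a sharper interpolation than the crude bound $[v_x]_{\alpha,x}\le\sup|v_{xx}|$ is intended. This does not affect anything downstream: in the fixed-point argument of Section~\ref{sec:2} only the $l=1$ case (for which you correctly identify $\beta=\alpha/2$) and the mere positivity of $\beta$ are used.
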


Next we provide a list of results that are useful in the contraction argument in the proof of the short-time existence.
In the following lemma we use  that, given $\sigma^{i} \in C^{2,\alpha}([0,1])$, then  $\sigma^{i} \in C^{\frac{2+\alpha}{2},2+\alpha}([0,T]\times [0,1])$ by extending it as a constant function in time. For the definition of $X_{i}$, $\delta$ and $T$ recall \eqref{defXi} and \eqref{eq:immersion} and the remarks in between.

\begin{lemma}\label{lem:hilfsatz}
Let $\sigma^{i} \in C^{2,\alpha}([0,1])$ and $\bar{u}^{i}, \bar{v}^{i} \in X_i$.
Then we have that 
\begin{equation*}
\|  \sigma^{i}_{x}  - \bar{u}^{i}_{x}\|_{C^{\frac{\alpha}{2},\alpha}([0,T]\times[0,1])} \leq C T^{\frac{\alpha}{2}}
\Big(\|\bar{u}^{i} \|_{C^{\frac{2+\alpha}{2},2+\alpha}([0,T]\times[0,1])} + \|\sigma^{i} \|_{C^{2,\alpha}([0,1])}\Big)
\end{equation*}
for some universal constant $C$. Moreover, for $T<1$ we have that
\begin{equation*}\label{eq:hol1}
\| | \sigma^{i}_{x} |- |\bar{u}^{i}_{x}|\|_{C^{\frac{\alpha}{2},\alpha}([0,T]\times[0,1])} \leq C T^{\frac{\alpha}{2}} \Big(\|\bar{u}^{i} \|_{C^{\frac{2+\alpha}{2},2+\alpha}([0,T]\times[0,1])} + \|\sigma^{i} \|_{C^{2,\alpha}([0,1])}\Big)^3\,  ,
\end{equation*}
with $C= C(\delta)$. Furthermore, for $m \in \mathbb{N}$ 
 we have
\allowdisplaybreaks{\begin{align*}
\Big\| \frac{1}{|\sigma^{i}_{x} |^m}- \frac{1}{| \bar{u}^{i}_{x}|^m} \Big\|_{C^{\frac{\alpha}{2},\alpha}([0,T]\times [0,1])} & \leq C   T^{\frac{\alpha}{2}} 
\\
\Big\| \frac{1}{| \sigma^{i}_{x} (\cdot, x)|^m}- \frac{1}{| \bar{u}^{i}_{x} (\cdot, x)|^m}\Big\|_{C^{0,\frac{1+\alpha}{2}}([0,T])} & \leq C T^{\frac{\alpha}{2}} 
\end{align*}
for any $x \in [0,1]$ and with  $C=C(m,\delta, \|\bar{u}^{i} \|_{C^{\frac{2+\alpha}{2},2+\alpha}([0,T]\times [0,1])}, \|\sigma^{i} \|_{C^{2,\alpha}([0,1])})$
as well as 
\begin{align*}
\Big\| \frac{1}{| \bar{u}^{i}_{x} |^m}- \frac{1}{| \bar{v}^{i}_{x}|^m}\Big\|_{C^{\frac{\alpha}{2},\alpha}([0,T]\times [0,1])} & \leq  C  T^{\frac{\alpha}{2}}  \| \bar{u}^{i} - \bar{v}^{i}\|_{C^{\frac{2+\alpha}{2},2+\alpha}([0,T]\times [0,1])}\, ,\\ 
\Big\| \frac{1}{| \bar{u}^{i}_{x} (\cdot, x)|^m}- \frac{1}{| \bar{v}^{i}_{x} (\cdot, x)|^m}\Big\|_{C^{0,\frac{1+\alpha}{2}}([0,T])} &\leq C  T^{\frac{\alpha}{2}}  \| \bar{u}^{i} - \bar{v}^{i}\|_{C^{\frac{2+\alpha}{2},2+\alpha}([0,T]\times [0,1])}\,, 
\end{align*}}
again for  $x \in [0,1]$ and  with $C=C(m,\delta, \|\bar{u}^{i} \|_{C^{\frac{2+\alpha}{2},2+\alpha}([0,T]\times [0,1])}, \|\bar{v}^{i} \|_{C^{\frac{2+\alpha}{2},2+\alpha}([0,T]\times [0,1])})$. 
\end{lemma}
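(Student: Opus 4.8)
The idea is that every quantity whose norm has to be estimated is built out of differences of functions that agree at the initial time $t=0$, so that the smallness in $T$ comes from a single application of Lemma~\ref{B5}, while the algebraic manipulations needed to pass from $\sigma^{i}_{x}-\bar u^{i}_{x}$ to $|\sigma^{i}_{x}|-|\bar u^{i}_{x}|$ and then to the reciprocal powers are handled by Remark~\ref{B1} and Lemmas~\ref{B2}, \ref{B3}. First I would set $w^{i}:=\bar u^{i}-\sigma^{i}$ and $z^{i}:=\bar u^{i}-\bar v^{i}$, where $\sigma^{i}$ is extended constantly in time; since $\bar u^{i},\bar v^{i}\in X_{i}$ satisfy $\bar u^{i}(0,\cdot)=\bar v^{i}(0,\cdot)=\sigma^{i}$, we have $w^{i}(0,\cdot)\equiv0$ and $z^{i}(0,\cdot)\equiv0$, and moreover $\|\sigma^{i}\|_{C^{(2+\alpha)/2,2+\alpha}([0,T]\times[0,1])}=\|\sigma^{i}\|_{C^{2,\alpha}([0,1])}$ while $\|\bar u^{i}\|_{C^{(2+\alpha)/2,2+\alpha}}\le M$ (and likewise for $\bar v^{i}$). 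Applying Lemma~\ref{B5} with $l=1$, $m=0$ (so that $\beta=\alpha/2$, using $T<1$) gives at once
\begin{align*}
\|\sigma^{i}_{x}-\bar u^{i}_{x}\|_{C^{\alpha/2,\alpha}}&=\|\partial_{x}w^{i}\|_{C^{\alpha/2,\alpha}}\le C\,T^{\alpha/2}\|w^{i}\|_{C^{(2+\alpha)/2,2+\alpha}}\\
&\le C\,T^{\alpha/2}\big(\|\bar u^{i}\|_{C^{(2+\alpha)/2,2+\alpha}}+\|\sigma^{i}\|_{C^{2,\alpha}([0,1])}\big),
\end{align*}
and similarly $\|\bar u^{i}_{x}-\bar v^{i}_{x}\|_{C^{\alpha/2,\alpha}}\le C\,T^{\alpha/2}\|z^{i}\|_{C^{(2+\alpha)/2,2+\alpha}}$; the corresponding bounds in $C^{0,(1+\alpha)/2}([0,T])$ at fixed $x$ follow from the fixed–time–slice statements in Remark~\ref{B1} and Lemma~\ref{B5}. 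This is the first estimate.

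For $\big||\sigma^{i}_{x}|-|\bar u^{i}_{x}|\big|$ I would invoke Lemma~\ref{B3}(2): using $|\sigma^{i}_{x}|\ge\delta$, $|\bar u^{i}_{x}|\ge\delta/2$ from \eqref{eq:immersion} one has $\|1/(|\sigma^{i}_{x}|+|\bar u^{i}_{x}|)\|_{C^{0}}\le\delta^{-1}$, while $\|\sigma^{i}_{x}\|_{C^{\alpha/2,\alpha}}$ and $\|\bar u^{i}_{x}\|_{C^{\alpha/2,\alpha}}$ are controlled by $\|\sigma^{i}\|_{C^{2,\alpha}([0,1])}$ and $M$; inserting the first estimate yields the claimed $T^{\alpha/2}$ times a third power. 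For the reciprocal powers I would use the identity
\begin{align*}
\frac{1}{|\sigma^{i}_{x}|^{m}}-\frac{1}{|\bar u^{i}_{x}|^{m}}=\big(|\bar u^{i}_{x}|-|\sigma^{i}_{x}|\big)\Big(\sum_{k=0}^{m-1}|\bar u^{i}_{x}|^{k}\,|\sigma^{i}_{x}|^{m-1-k}\Big)\,\frac{1}{|\sigma^{i}_{x}|^{m}}\,\frac{1}{|\bar u^{i}_{x}|^{m}}
\end{align*}
and bound each factor with Lemmas~\ref{B2} and \ref{B3}(1): the reciprocals $1/|\sigma^{i}_{x}|^{m}$, $1/|\bar u^{i}_{x}|^{m}$ are controlled in $C^{\alpha/2,\alpha}$ through Lemma~\ref{B2}(2) by the lower bound $\delta/2$ and by the (bounded) norms of $\sigma^{i}_{x},\bar u^{i}_{x}$, the sum in parentheses is a bounded polynomial in those, so all the smallness is carried by the single factor $|\bar u^{i}_{x}|-|\sigma^{i}_{x}|$ estimated above. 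Replacing $\sigma^{i}$ by $\bar v^{i}$ throughout gives the last two parabolic estimates, now with $\|\bar u^{i}-\bar v^{i}\|_{C^{(2+\alpha)/2,2+\alpha}}$ in front, since there the distinguished factor is $|\bar u^{i}_{x}|-|\bar v^{i}_{x}|$, estimated through $\|\bar u^{i}_{x}-\bar v^{i}_{x}\|$ by Lemma~\ref{B3}(2). The fixed-$x$ versions are obtained by the identical manipulation, using the one-variable analogues of Lemmas~\ref{B2}, \ref{B3} recorded in their statements together with the fixed-slice part of Lemma~\ref{B5}/Remark~\ref{B1}.

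The only delicate point is bookkeeping rather than a genuine obstacle: one must produce the factor $T^{\alpha/2}$ exactly once --- from the difference vanishing at $t=0$, via Lemma~\ref{B5}, using $T<1$ and $\beta\ge\alpha/2$ so that $T^{\beta}\le T^{\alpha/2}$ --- and then check that multiplying by the remaining coefficient functions does not spoil it, which holds because all those coefficients have parabolic (or one-variable) H\"older norms bounded independently of $T$ (the time-constant extension of $\sigma^{i}$ has $T$-independent norm, and $\bar u^{i},\bar v^{i}$ have norm $\le M$). Carefully recording which constants depend on $m$, $\delta$, and on $\|\bar u^{i}\|,\|\bar v^{i}\|,\|\sigma^{i}\|$ then gives precisely the stated dependencies.
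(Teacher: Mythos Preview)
Your proposal is correct and takes essentially the same approach as the paper: the paper's own proof is nothing more than a reference to \cite[Lemma~3.1, Lemma~3.4]{DLP-STE} together with Remark~\ref{B1} and Lemmas~\ref{B2}, \ref{B3}, \ref{B5}, and you have written out precisely how those ingredients combine (Lemma~\ref{B5} supplies the single $T^{\alpha/2}$ factor from the vanishing at $t=0$, Lemma~\ref{B3}(2) handles the passage to moduli, and the algebraic factorization plus Lemma~\ref{B2} takes care of the reciprocal powers). One minor point to keep an eye on in the fixed-$x$, $C^{0,(1+\alpha)/2}([0,T])$ estimates: Lemma~\ref{B5} as stated only covers $l+m<2$, so the case $l=1$, $m=1$ is not literally included, and you should note explicitly that the $(1+\alpha)/2$-H\"older seminorm of $\partial_x w^i$ in time is already part of the $C^{(2+\alpha)/2,2+\alpha}$-norm while the sup contribution picks up $T^{(1+\alpha)/2}\le T^{\alpha/2}$ from $w^i_x(0,\cdot)=0$ --- this is exactly the computation behind Lemma~\ref{B5}, just at the borderline index.
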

\begin{proof}It follows by an adaptation to the present setting of \cite[Lemma~3.1]{DLP-STE} and \cite[Lemma~3.4]{DLP-STE} using Remark~\ref{B1} and  the Lemmas~\ref{B2}, \ref{B3}, \ref{B5} stated above.
\end{proof}
 
 \begin{lemma}\label{hilfsatz2}
 Let $\sigma^{i} \in C^{2,\alpha}([0,1])$, $\bar{u}^{i}, \bar{v}^{i} \in X_i$, $T<1$ and $x \in [0,1]$. Then we have
 \begin{align*}
 \left \|  \frac{\sigma^{i}_{x}}{|\sigma^{i}_{x}|}  -\frac{\bar{u}^{i}_{x}}{|\bar{u}^{i}_{x}|} \right \|_{C^{\frac{\alpha}{2},\alpha}([0,T]\times[0,1])} &\leq C T^{\frac{\alpha}{2}}, \\
 \left \|  \frac{\sigma^{i}_{x}}{|\sigma^{i}_{x}|}(\cdot, x)  -\frac{\bar{u}^{i}_{x}}{|\bar{u}^{i}_{x}|}(\cdot, x) \right \|_{C^{0,\frac{1+\alpha}{2}}([0,T])} &\leq C T^{\frac{\alpha}{2}}
 \end{align*}
 with $C=C(\delta, \|\bar{u}^{i} \|_{C^{\frac{2+\alpha}{2},2+\alpha}([0,T]\times [0,1])}, \|\sigma^{i} \|_{C^{2,\alpha}([0,1])})$. Similarly
 \begin{align*}
 \left \|  \frac{\bar{v}^{i}_{x}}{|\bar{v}^{i}_{x}|}  -\frac{\bar{u}^{i}_{x}}{|\bar{u}^{i}_{x}|} \right \|_{C^{\frac{\alpha}{2},\alpha}([0,T]\times[0,1])} &\leq C T^{\frac{\alpha}{2}}\| \bar{u}^{i} - \bar{v}^{i}\|_{C^{\frac{2+\alpha}{2},2+\alpha}([0,T]\times [0,1])}\\
 \left \|  \frac{\bar{v}^{i}_{x}}{|\bar{v}^{i}_{x}|}(\cdot,x)  -\frac{\bar{u}^{i}_{x}}{|\bar{u}^{i}_{x}|} (\cdot, x)\right \|_{C^{0,\frac{1+\alpha}{2}}([0,T])} &\leq C T^{\frac{\alpha}{2}}\| \bar{u}^{i} - \bar{v}^{i}\|_{C^{\frac{2+\alpha}{2},2+\alpha}([0,T]\times [0,1])}
 \end{align*}
 with $C=C(\delta, \|\bar{u}^{i} \|_{C^{\frac{2+\alpha}{2},2+\alpha}([0,T]\times [0,1])}, \|\bar{v}^{i} \|_{C^{\frac{2+\alpha}{2},2+\alpha}([0,T]\times [0,1])} )$.
 \end{lemma}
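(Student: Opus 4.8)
The plan is to reduce all four estimates to the product rule for parabolic Hölder norms (Lemma~\ref{B2}) together with the difference estimates for $\sigma^{i}_{x}-\bar u^{i}_{x}$ and for the reciprocals $1/|\sigma^{i}_{x}|-1/|\bar u^{i}_{x}|$ already established in Lemma~\ref{lem:hilfsatz}. The algebraic backbone is the splitting of a difference of unit tangents as a sum of two products,
\[
\frac{\sigma^{i}_{x}}{|\sigma^{i}_{x}|} - \frac{\bar u^{i}_{x}}{|\bar u^{i}_{x}|}
= \big(\sigma^{i}_{x} - \bar u^{i}_{x}\big)\frac{1}{|\sigma^{i}_{x}|}
+ \bar u^{i}_{x} \left(\frac{1}{|\sigma^{i}_{x}|} - \frac{1}{|\bar u^{i}_{x}|}\right),
\]
each summand containing exactly one factor that is small in $T$ and one factor that is merely bounded.

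First I would treat the estimate in $C^{\frac{\alpha}{2},\alpha}([0,T]\times[0,1])$. Applying Lemma~\ref{B2}(1) to each summand bounds the left-hand side by
\[
\|\sigma^{i}_{x}-\bar u^{i}_{x}\|_{C^{\frac{\alpha}{2},\alpha}}\Big\|\tfrac{1}{|\sigma^{i}_{x}|}\Big\|_{C^{\frac{\alpha}{2},\alpha}}
+ \|\bar u^{i}_{x}\|_{C^{\frac{\alpha}{2},\alpha}}\Big\|\tfrac{1}{|\sigma^{i}_{x}|}-\tfrac{1}{|\bar u^{i}_{x}|}\Big\|_{C^{\frac{\alpha}{2},\alpha}}.
\]
Here $\|1/|\sigma^{i}_{x}|\|_{C^{\frac{\alpha}{2},\alpha}}$ is bounded in terms of $\delta$ and $\|\sigma^{i}\|_{C^{2,\alpha}([0,1])}$ via Lemma~\ref{B2}(2) and Lemma~\ref{B3}(1) (using $|\sigma^{i}_{x}|\ge\delta$), while $\|\bar u^{i}_{x}\|_{C^{\frac{\alpha}{2},\alpha}}$ is controlled by $\|\bar u^{i}\|_{C^{\frac{2+\alpha}{2},2+\alpha}}\le M$ through Remark~\ref{B1} and the embedding $C^{\frac{1+\alpha}{2},1+\alpha}\subset C^{\frac{\alpha}{2},\alpha}$. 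The two remaining factors $\|\sigma^{i}_{x}-\bar u^{i}_{x}\|_{C^{\frac{\alpha}{2},\alpha}}$ and $\|1/|\sigma^{i}_{x}|-1/|\bar u^{i}_{x}|\|_{C^{\frac{\alpha}{2},\alpha}}$ are each $\le C T^{\frac{\alpha}{2}}$ by Lemma~\ref{lem:hilfsatz}, which delivers the claimed bound. The fixed-$x$ version in $C^{0,\frac{1+\alpha}{2}}([0,T])$ follows from the identical splitting, now invoking the fixed-$x$ counterparts of these factors recorded in Lemma~\ref{lem:hilfsatz} and Lemma~\ref{B5}.

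For the two difference estimates between $\bar u^{i}$ and $\bar v^{i}$ I would use the analogous identity
\[
\frac{\bar v^{i}_{x}}{|\bar v^{i}_{x}|} - \frac{\bar u^{i}_{x}}{|\bar u^{i}_{x}|}
= \big(\bar v^{i}_{x} - \bar u^{i}_{x}\big)\frac{1}{|\bar v^{i}_{x}|}
+ \bar u^{i}_{x} \left(\frac{1}{|\bar v^{i}_{x}|} - \frac{1}{|\bar u^{i}_{x}|}\right).
\]
The decisive observation is that $\bar u^{i}$ and $\bar v^{i}$ share the initial datum $\sigma^{i}$, so $w:=\bar u^{i}-\bar v^{i}$ satisfies $w(0,\cdot)=0$; hence Lemma~\ref{B5} with $l=1$, $m=0$ yields $\|\bar v^{i}_{x}-\bar u^{i}_{x}\|_{C^{\frac{\alpha}{2},\alpha}}\le C T^{\frac{\alpha}{2}}\|\bar u^{i}-\bar v^{i}\|_{C^{\frac{2+\alpha}{2},2+\alpha}}$, supplying simultaneously the factor $T^{\frac{\alpha}{2}}$ and the difference factor in the first summand. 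For the second summand, the last two estimates of Lemma~\ref{lem:hilfsatz} give $\|1/|\bar v^{i}_{x}|-1/|\bar u^{i}_{x}|\|_{C^{\frac{\alpha}{2},\alpha}}\le C T^{\frac{\alpha}{2}}\|\bar u^{i}-\bar v^{i}\|_{C^{\frac{2+\alpha}{2},2+\alpha}}$, while $\|\bar u^{i}_{x}\|_{C^{\frac{\alpha}{2},\alpha}}$ is again bounded by $M$. Combining through Lemma~\ref{B2}(1) gives the space-time estimate, and the fixed-$x$ time-Hölder version follows verbatim from the corresponding fixed-$x$ bounds of Lemmas~\ref{lem:hilfsatz} and~\ref{B5}.

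Since every ingredient is already in place, I do not expect a genuine obstacle here; the only point requiring care is bookkeeping, namely ensuring that in each of the two products exactly one factor carries the smallness $T^{\frac{\alpha}{2}}$ while the other is bounded solely in terms of $\delta$, $M$ and the stated norms. Note that the anisotropy $\varphi^{\circ}$ plays no role in this lemma, as the quantities estimated are purely metric; this is precisely the geometric preparation that later feeds the boundary-term manipulations in the contraction argument.
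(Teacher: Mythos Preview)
Your proposal is correct and follows exactly the paper's approach: the paper's proof consists of the single sentence ``It follows by writing every equation in the form $\frac{a}{|a|}-\frac{b}{|b|}=\frac{1}{|a|}(a-b) + b (\frac{1}{|a|} -\frac{1}{|b|})$ and using the previous Lemmas~\ref{B2}, \ref{B3}, \ref{lem:hilfsatz}'', which is precisely the algebraic splitting and toolbox you employ. Your write-up simply spells out the bookkeeping that the paper leaves implicit.
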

 \begin{proof}
 It follows by writing every equation in the form $$\frac{a}{|a|}-\frac{b}{|b|}=\frac{1}{|a|}(a-b) + b (\frac{1}{|a|} -\frac{1}{|b|})$$ and using the previous Lemmas~\ref{B2}, \ref{B3}, \ref{lem:hilfsatz}.
 \end{proof}

 \begin{lemma}\label{hilfsatz3}
 Let $h:\R \to \R$ be a smooth map and $u,v \in C^{\frac{\alpha}{2},\alpha}([0,T]\times [0,1])$. Then
 \begin{align*}
  \| h(u) \|_{C^{\frac{\alpha}{2},\alpha}([0,T]\times[0,1])}
  & \leq C \|u \|_{C^{\frac{\alpha}{2},\alpha}([0,T]\times [0,1])}
  \end{align*}
  where $C$ depends on the $C^{1}$-norm of $h$ evaluated on the compact set $K_{1}=u([0,T] \times [0,1])$.
  Similarly
  \begin{multline*}
  \| h(u)-h(v) \|_{C^{\frac{\alpha}{2},\alpha}([0,T]\times[0,1])}\\
  \leq C(1+ \|v\|_{C^{\frac{\alpha}{2},\alpha}([0,T]\times [0,1])}
  + \|u\|_{C^{\frac{\alpha}{2},\alpha}([0,T]\times [0,1])}) \|u -v\|_{C^{\frac{\alpha}{2},\alpha}([0,T]\times [0,1])}
  \end{multline*}
  where $C$ depends on the $C^{2}$-norm of $h$ evaluated on the compact set $$K_{2}=conv(u([0,T] \times [0,1])\cup v([0,T] \times [0,1])).$$
 \end{lemma}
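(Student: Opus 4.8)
The plan is to estimate the full parabolic norm $\|\cdot\|_{C^{\frac{\alpha}{2},\alpha}}$ by treating its three constituents separately: the supremum part, the spatial H\"older seminorm $[\cdot]_{\alpha,x}$, and the temporal H\"older seminorm $[\cdot]_{\frac{\alpha}{2},t}$ (recall the definition of the norm given before Remark~\ref{B1}). The engine throughout is the mean value theorem applied pointwise to $h$ on the compact set $K_{1}$: since all values $u(t,x)$ lie in $K_1$, for any two points one may write $h(u(t,x))-h(u(t,y))=h'(\xi)\,(u(t,x)-u(t,y))$ with $\xi$ between the two values, and likewise in the time variable.

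For the first estimate I would bound the supremum term by $\sup_{K_1}|h|$, and for the two seminorms use the identity above to obtain
$$[h(u)]_{\alpha,x}\le \Big(\sup_{K_1}|h'|\Big)\,[u]_{\alpha,x}, \qquad [h(u)]_{\frac{\alpha}{2},t}\le \Big(\sup_{K_1}|h'|\Big)\,[u]_{\frac{\alpha}{2},t},$$
since the Lipschitz factor $\sup_{K_1}|h'|$ pulls out of the difference quotient without affecting the H\"older exponent. Summing the three contributions and absorbing the $h$-dependent factors, all controlled by $\|h\|_{C^1(K_1)}$, into a single constant $C$ yields $\|h(u)\|_{C^{\frac{\alpha}{2},\alpha}}\le C\|u\|_{C^{\frac{\alpha}{2},\alpha}}$.

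For the difference estimate I would linearise by the integral form of the mean value theorem. Setting $w:=u-v$ and $g:=\int_0^1 h'\big(v+s\,w\big)\,ds$ one has the pointwise identity $h(u)-h(v)=g\,w$. Applying the product estimate of Lemma~\ref{B2}(1) gives $\|h(u)-h(v)\|_{C^{\frac{\alpha}{2},\alpha}}\le C\,\|g\|_{C^{\frac{\alpha}{2},\alpha}}\,\|w\|_{C^{\frac{\alpha}{2},\alpha}}$. It then remains to bound $\|g\|_{C^{\frac{\alpha}{2},\alpha}}$. Writing $g=G(u,v)$ with $G(a,b):=\int_0^1 h'\big(b+s(a-b)\big)\,ds$, a smooth function of two variables whose $C^1$-norm on $K_2\times K_2$ is controlled by the $C^2$-norm of $h$ on $K_2$, the same mean value argument as above (now for the two-variable map $G$) produces $\|g\|_{C^{\frac{\alpha}{2},\alpha}}\le C\big(1+\|u\|_{C^{\frac{\alpha}{2},\alpha}}+\|v\|_{C^{\frac{\alpha}{2},\alpha}}\big)$, the additive $1$ arising from the supremum term $\sup|G|$. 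Combining the two displays gives the claimed inequality.

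The product and Lipschitz manipulations are routine; the only point requiring genuine care is that the linearising factor $g$ is built from the intermediate states $v+s(u-v)$, which is precisely why $K_2$ must be taken to be the \emph{convex} hull of the two images. This guarantees $v+s(u-v)\in K_2$ for every $s\in[0,1]$, so that $h'$ and $h''$ are uniformly bounded along the whole segment and $G$ is genuinely smooth with controlled $C^1$-norm on $K_2\times K_2$. A secondary bookkeeping point is that the product lemma consumes the full norm of $g$ rather than merely a seminorm, which is exactly the origin of the $1+\|u\|+\|v\|$ factor appearing in the statement.
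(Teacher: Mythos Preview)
Your proof is correct and follows essentially the same route as the paper: the first inequality is obtained by the mean value theorem exactly as you describe, and for the second both you and the paper linearise via the integral representation $h(u)-h(v)=\int_0^1 h'(v+s(u-v))\,ds\cdot(u-v)$ on the convex hull $K_2$. The only cosmetic difference is that you invoke the product estimate of Lemma~\ref{B2} and then bound $\|g\|_{C^{\frac{\alpha}{2},\alpha}}$ via the two-variable composition, whereas the paper carries out the same add-and-subtract estimate on the H\"older seminorm of the product by hand; the resulting bounds are identical.
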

 
 \begin{proof}
 By definition of the norm we have that
 $$\| h(u) \|_{C^{\frac{\alpha}{2},\alpha}([0,T]\times[0,1])} = \sup_{[0,T] \times [0,1]} |h(u(t,x))| +[h(u)]_{\alpha,x} +[h(u)]_{\frac{\alpha}{2},t} $$
 so that, using the mean value, theorem we infer
 $$ \| h(u) \|_{C^{\frac{\alpha}{2},\alpha}([0,T]\times[0,1])} \leq sup_{K_{1}}(|h|+|h'|) (1+ [u]_{\alpha,x} +[u]_{\frac{\alpha}{2},t}) $$
and the first statement follows. The second statement is derived in a similar way. For instance, to estimate $[h(u)-h(v)]_{\alpha,x}$  we compute
 \begin{align*}
 &\frac{|h(u(t,x))-h(v(t,x)) -h(u(t,y))+h(v(t,y))|}{|x-y|^{\alpha}} \\
 &=\frac{|\int_{0}^{1} \frac{d}{d\lambda}[ h(\lambda u(t,x) +(1-\lambda)v(t,x))
 -h(\lambda u(t,y) +(1-\lambda)v(t,y))] d\lambda|}{|x-y|^{\alpha}}\\
 &= \Big|\frac{\int_{0}^{1} h'(\lambda u(t,x) +(1-\lambda)v(t,x))(u(t,x)-v(t,x) ) \,d\lambda}{|x-y|^{\alpha}}\\
 & \quad -\frac{\int_{0}^{1}h'(\lambda u(t,y) +(1-\lambda)v(t,y))(u(t,y)-v(t,y))  \,d\lambda}{|x-y|^{\alpha}} \Big|\\
 & \leq sup_{K_{2}} |h''| ([ u]_{\alpha,x}+ [ v ]_{\alpha,x}) \|u-v\|_{C^{0}} + sup_{K_{2}} |h'| [u-v]_{\alpha,x}  .
 \end{align*}
 \end{proof}

We conclude the Appendix with a repametrization result used in the proof of Proposition \ref{pro:GEO}.

\begin{lemma}\label{lempara}
Let $\mu\in \R$ and $\gamma:[0,L]\to\R^2$ of class $H^3$, with $|\gamma'(x)|=1$ for all $x\in [0,L]$.
We claim that there exists $C=C(L,\mu, \|\gamma\|_{H^3})>0$ and a parametrization $\phi:[0,L]\to [0,L]$ 
such that, 
letting $\tilde\gamma = \gamma\circ\phi$,  it holds
\begin{eqnarray*}
|\tilde\gamma'(x)| &=& \phi'(x)\,\ge\, \frac 12
\qquad \text{for all $x\in [0,L]$,}
\\
\frac{\tilde\gamma''(0)\cdot \tilde\gamma'(0)}{|\tilde\gamma'(0)|^3}  &=&  \frac{\phi''(0)}{\phi'(0)^2} 
\,=\, \mu,
\\ \|\tilde\gamma\|_{C^{2,1/2}}&\le& C.
\end{eqnarray*}
\end{lemma}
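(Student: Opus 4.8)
The plan is to reduce the two geometric conditions to purely one-dimensional statements about $\phi$, then construct $\phi$ explicitly as a small perturbation of the identity localized near $x=0$, and finally transfer regularity from $\gamma$ to $\tilde\gamma$ by the chain rule together with Sobolev embedding.

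First I would record the elementary chain-rule identities. Writing $\tilde\gamma=\gamma\circ\phi$ with $\phi'>0$, one has $\tilde\gamma'=(\gamma'\circ\phi)\,\phi'$ and $\tilde\gamma''=(\gamma''\circ\phi)\,(\phi')^2+(\gamma'\circ\phi)\,\phi''$. Since $|\gamma'|\equiv 1$ this gives $|\tilde\gamma'|=\phi'$, and differentiating $|\gamma'|^2\equiv 1$ yields $\gamma'\cdot\gamma''\equiv 0$, so that $\tilde\gamma''\cdot\tilde\gamma'=\phi'\phi''\,|\gamma'\circ\phi|^2=\phi'\phi''$ and hence $\tfrac{\tilde\gamma''\cdot\tilde\gamma'}{|\tilde\gamma'|^3}=\tfrac{\phi''}{(\phi')^2}$. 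Thus it suffices to construct an increasing diffeomorphism $\phi:[0,L]\to[0,L]$ with $\phi'\ge\tfrac12$ everywhere, $\phi(0)=0$, $\phi(L)=L$, $\phi'(0)=1$ (so that $\phi''(0)/\phi'(0)^2=\phi''(0)$), $\phi''(0)=\mu$, and $\|\phi\|_{C^{2,1/2}([0,L])}\le C(L,\mu)$.

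For the construction I would fix once and for all a cutoff $\chi\in C^\infty_c([0,\infty))$ with $\chi\equiv 1$ on $[0,\tfrac12]$, $\chi\equiv 0$ on $[1,\infty)$ and $0\le\chi\le 1$, and set, for a parameter $\eps\in(0,L)$ to be chosen,
\[
\phi(x):=x+\frac{\mu}{2}\,\chi\!\left(\frac{x}{\eps}\right)x^2 .
\]
A direct computation gives $\phi(0)=0$, $\phi(L)=L$ (since $\chi(L/\eps)=0$ as soon as $\eps\le L$), $\phi'(0)=1$ and $\phi''(0)=\mu\,\chi(0)=\mu$. The perturbation is supported in $[0,\eps]$, and there $|\phi'(x)-1|\le C_\chi|\mu|\,\eps$ with $C_\chi$ depending only on $\|\chi\|_{C^1}$; choosing $\eps=\eps(L,\mu)\in(0,L]$ small enough we get $\phi'\ge\tfrac12>0$ on all of $[0,L]$, so $\phi$ is an increasing diffeomorphism. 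Since $\phi$ is smooth and $\eps$ is now a function of $L$ and $\mu$ only, $\|\phi\|_{C^{2,1/2}([0,L])}$ is bounded by a constant $C(L,\mu)$ (the seminorms of $\phi''$ scale like negative powers of $\eps$, which is allowed).

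Finally I would transfer the regularity. By the one-dimensional Sobolev embedding $H^3([0,L])\hookrightarrow C^{2,1/2}([0,L])$ we have $\|\gamma\|_{C^{2,1/2}([0,L])}\le C_L\|\gamma\|_{H^3}$. Then, using the chain-rule formulas for $\tilde\gamma'$ and $\tilde\gamma''$ together with the standard facts that $C^{0,1/2}$ is a Banach algebra and that $f\mapsto f\circ\phi$ maps $C^{0,1/2}$ to $C^{0,1/2}$ with norm controlled by $\|\phi'\|_{L^\infty}^{1/2}$, one bounds $\|\tilde\gamma\|_{C^0}$, $\|\tilde\gamma'\|_{C^0}$ and $\|\tilde\gamma''\|_{C^{0,1/2}}$ in terms of $\|\gamma\|_{C^{2,1/2}}$ and $\|\phi\|_{C^{2,1/2}}$, yielding $\|\tilde\gamma\|_{C^{2,1/2}([0,L])}\le C(L,\mu,\|\gamma\|_{H^3})$, as required. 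I expect the only delicate point to be the choice of the localization scale $\eps$: it must be taken small (in terms of $|\mu|$) so the quadratic bump does not destroy $\phi'\ge\tfrac12$, while the resulting $C^{2,1/2}$ norm of $\phi$ is still permitted to depend on $L$ and $\mu$; everything else is routine.
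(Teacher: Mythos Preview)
Your proof is correct and follows essentially the same strategy as the paper: reduce to constructing a diffeomorphism $\phi$ of $[0,L]$ with $\phi'(0)=1$, $\phi''(0)=\mu$, $\phi'\ge\tfrac12$, and controlled $C^{2,1/2}$ norm, built as a compactly supported perturbation of the identity near $x=0$. The only difference is in the explicit form of the perturbation: the paper writes $\phi(x)=x+h(x)$ with $h''=f$ for a smooth $f$ supported in $[0,\delta]$, $f(0)=\mu$, $|f|\le|\mu|$, $\int_0^L f=0$, taking $\delta=\min(L/2,1/(2|\mu|))$ so that $|h'|\le\delta|\mu|\le\tfrac12$ directly; you instead write down the explicit quadratic bump $\tfrac{\mu}{2}\chi(x/\eps)x^2$ and choose the scale $\eps$ a posteriori. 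Both constructions achieve the same goal; yours is slightly more concrete, the paper's slightly cleaner in that the bound $\phi'\ge\tfrac12$ comes for free from the choice of $\delta$ without tracking cutoff constants.
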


\begin{proof}
Let $\delta = \min(L/2,1/(2|\mu|))$ and fix a smooth function $f:[0,L]\to \R$ such that $|f|\le |\mu|$, $f(0)=\mu$,
$f=0$ in $[\delta,L]$ and $\int_0^L f =0$.
We then set $\phi(x)=x+h(x)$, with
\[
h(x) = -\int_x^L \int_0^y f(t) dt dy.
\]
We then have 
\[
h'(x)= \int_0^x f(t) dt,\qquad h''(x)=f(x),
\]
so that $h'(0)=0$, $h'(L)=0$, $h''(0)=f(0)=\mu$, $h''(L)=f(L)=0$ and
\[
|h'(x)|= \left|\int_0^x f(t) dt\right|\le \delta |\mu| \le \frac 12.
\]
Finally we have
$\|\phi\|_{C^{2,1/2}}\le C(L,\|f\|_{C^\frac 12})$,
and the curve $\tilde\gamma= \gamma\circ\phi$ satisfies the required properties.
\end{proof}


\bibliographystyle{acm}

\end{document}